\newcommand{\ZZ}{\mathbb{Z}}
\newcommand{\CC}{\mathbb{C}}
\newcommand{\NN}{\mathbb{N}}  
\newcommand{\Glie}{\mathfrak{g}}
\newcommand{\Aq}{\mathcal{A}}
\newcommand{\Uc}{U(\mathfrak{sl}_2)}
\newcommand{\Uh}{U_h(\mathfrak{sl}_2)}
\newcommand{\Urh}[1]{U_{#1}(\mathfrak{sl}_2)}
\newcommand{\Uhh}{U_{h,h'}(\mathfrak{sl}_2,g)}
\newcommand{\cmod}{\mathcal{C}(\mathfrak{sl}_2)}
\newcommand{\cmodhh}{\mathcal{C}^{\: \! h,h'}(\mathfrak{sl}_2,g)}
\newcommand{\cmodh}{\mathcal{C}^{\: \! h}(\mathfrak{sl}_2)}
\newcommand{\grot}{\text{Rep} (\mathfrak{sl}_2)}
\newcommand{\groth}{\text{Rep}^{h} (\mathfrak{sl}_2)}
\newcommand{\grothh}{\text{Rep}^{h,h'} (\mathfrak{sl}_2,g)}
\newcommand{\xxn}[4]{\left( Q_{#4} \, T^{\: \! {\{ H_{#1}^{#2} #3 \}}_{\! \! \: Q_{#4}}} \right)^{H_{#1}^{#2} #3} - \ \left( Q_{#4} \, T^{\: \! {\{ H_{#1}^{#2} #3 \}}_{\! \! \: Q_{#4}}} \right)^{-H_{#1}^{#2}}}
\newcommand{\xxd}[4]{Q_{#4} \, T^{\: \! {\{ H_{#1}^{#2} #3 \}}_{\! \! \: Q_{#4}}} \ - \ Q_{#4}^{-1} \, T^{\: \! - {\{ H_{#1}^{#2} #3 \}}_{\! \! \: Q_{#4}}}}
\newcommand{\xxf}[4]{\frac{\xxn{#1}{#2}{#3}{#4}}{\xxd{#1}{#2}{#3}{#4}}}
\newcommand{\xxp}[2]{\prod_{e = 1, -1} \xxf{e}{#1}{#2}{}}
\begin{document}
\parindent = 0pt

\title{Groupes Quantiques d'Interpolation de Langlands de Rang 1}
\shorttitle{Groupes Q. d'Interpolation de Langlands de Rang 1}

\author{Alexandre Bouayad}
\abbrevauthor{A. Bouayad}
\headabbrevauthor{A. Bouayad}

\address{Université Paris 7, Institut de Mathématiques de Jussieu, 175, rue du Chevaleret 75013, \\ Paris - France.}

\correspdetails{\href{mailto:bouayad@math.jussieu.fr}{bouayad@math.jussieu.fr}}

\received{July 2011}

\begin{abstract}
On étudie une certaine famille, paramétrée par un entier $g \in \NN_{\geq 1}$, de doubles déformations $\Uhh$ de l'algèbre enveloppante $\Uc$, dans l'esprit de \cite{hernandez}. On prouve que $\Uhh$ déforme simultanément les groupes quantiques $\Uh$ et $\Urh{\! gh'}$. On montre que cette propriété d'interpolation explique la dualité de Langlands pour les représentations des groupes quantiques en rang 1. On résout ainsi une conjecture de \cite{hernandez} dans ce cas : on prouve pour tout $g \in \NN_{\geq 1}$ l'existence de représentations de $\Uhh$ qui déforment simultanément deux représentations Langlands duales. On étudie aussi plus généralement la théorie des représentations de rang fini de $\Uhh$.
\end{abstract}

\maketitle

\section*{Interpolating Langlands Quantum Groups of Rank 1}
\tabsize We study a certain family, parameterized by an integer $g \in \NN_{\geq 1}$, of double deformations $\Uhh$ of the envelopping algebra $\Uc$, in the spirit of \cite{hernandez}. We prove that $\Uhh$ simultaneously deforms the quantum groups $\Uh$ and $\Urh{\! gh'}$. We show this interpolating property explains the Langlands duality for the representations of the quantum groups in rank 1. Hence we prove a conjecture of \cite{hernandez} in this case : we prove for all $g \in \NN_{\geq 1}$ the existence of representations of $\Uhh$ which simultaneously deform two Langlands dual representations. We also study more generaly the finite rank representation theory of $\Uhh$.

\newpage
\section{Introduction}
Soit $\Glie$ une algèbre de Kac-Moody symétrisable et ${}^L \Glie$ sa duale de Langlands, dont la matrice de Cartan est la transposée de celle de $\Glie$. Il existe une dualité entre les représentations de $\Glie$ et ${}^L \Glie$ : voir \cite{hernandez2, hernandez, kashiwara, littelmann, mcgerty}. \\

La dualité de Langlands pour les représentations peut être décrite au niveau des caractères. Supposons pour simplifier que $\Glie$ soit de type fini et simple (${}^L \Glie$ est alors aussi de type fini et simple). Soit $I$ l'ensemble des sommets du diagramme de Dynkin de $\Glie$ et $r_i$ ($i \in I$) les longueurs des racines associées. On note $r$ la longueur maximale parmi les longueurs $r_i$. La longueur $r$ est égale à $1$ lorsque $\Glie$ est simplement lacée, à $2$ pour $B_l$, $C_l$ et $F_4$, et à $3$ pour $G_2$. Supposons que le plus haut poids $\lambda$ de la représentation irréductible de dimension finie $L(\lambda)$ s'écrive sous la forme
$$ \lambda \ = \ \sum_{i \in I} (1-r + r_i) \: \! m_i \: \! \omega_i \, , \quad m_i \in \NN \, , $$
où $\omega_i$ sont les poids fondamentaux de $\Glie$. En d'autres mots, notant $P$ le réseau des poids de $\Glie$, supposons que $\lambda$ soit dominant et appartienne au sous-réseau $P' \subset P$ engendré par $(1 - r + r_i) \: \! \omega_i$, $i \in I$. Le caractère de $L(\lambda)$ s'écrit
$$ \chi (L(\lambda)) \ = \ \sum_{\nu \in P} d(\lambda, \nu) \: \! e^{\nu} \, , \quad d(\lambda, \nu) \in \NN \, . $$
Soit $\nu = \sum_i (1-r+r_i) n_i \: \! \omega_i \in P'$, on note $\nu' := \sum_i n_i \: \! {}^L \omega_i$, où les ${}^L \omega_i$ sont les poids fondamentaux de ${}^L \Glie$. En remplaçant dans $\chi (L(\lambda))$ chaque $e^{\nu}$ par $e^{\nu'}$ lorsque $\nu \in P'$ et par $0$ sinon, il est montré dans \cite{hernandez} que l'on obtient le caractère ${}^L \chi ( {}^L \! L(\lambda))$ d'une représentation virtuelle de ${}^L \Glie$, qui contient la représentation irréductible $L(\lambda')$ de ${}^L \Glie$, i.e. $\chi(L\! (\lambda')) \leq {}^L \chi ( {}^L L(\lambda))$. Pour une description de la dualité de Langlands au niveau des caractères dans le cas général d'une algèbre de Kac-Moody, voir \cite{mcgerty}. \\

Dans le cas d'une algèbre $\Glie$ de type fini Littelmann \cite{littelmann} a prouvé que la représentation virtuelle de ${}^L \Glie$ est en fait une représentation ${}^L \! L(\lambda)$ réelle. Dans le cas général d'une algèbre de Kac-Moody, voir McGerty \cite{mcgerty}. \\
La stratégie consiste à $q$-déformer la situation. Les théories des représentations de $\Glie$ et du groupe quantique $U_q(\Glie)$ (avec $q$ générique) sont semblables, en particulier il existe une représentation $L^q(\lambda)$ de $U_q(\Glie)$, analogue de la représentation $L(\lambda)$ de $\Glie$. Lorsque $q$ est spécialisé à une certaine racine de l'unité $\varepsilon$, l'homomorphisme de Frobenius quantique défini par Lusztig \cite[35]{lusztig} peut être étendu afin de construire une action du groupe quantique modifiée $\dot{U}_{\varepsilon} ( {}^L \Glie)$ sur $L^{\varepsilon} (\lambda)$, qui fournit alors l'action de ${}^L \Glie$ sur ${}^L \! L(\lambda)$. \\

La dualité de Langlands décrite précédemment a des liens avec la correspondance géométrique de Langlands (voir \cite{frenkelcft} pour une introduction générale au programme de Langlands et \cite{hernandez} pour d'autres références). On suit ici l'introduction de \cite{hernandez}. \\
Soit $\Glie$ une algèbre de Lie semi-simple de dimension finie. Un résultat clé dans la correspondance géométrique de Langlands est un isomorphisme entre le centre $Z(\hat{\Glie})$ de l'algèbre enveloppante complétée de $\hat \Glie$ au niveau critique et la $\mathcal{W}$-algèbre classique $\mathcal{W}({}^L \Glie)$. Afin de mieux comprendre cet isomorphisme, on peut $q$-déformer la situation et considérer le centre $Z_q(\hat{\Glie})$ de l'algèbre affine quantique $U_q(\hat{\Glie})$ au niveau critique, ce qui est le point de départ dans \cite{frenkel-re}. Le centre $Z_q(\hat{\Glie})$ apparaît alors relié à l'anneau de Grothendieck $\text{Rep} \, U_q(\hat{\Glie})$ des représentations de dimension finie de $U_q(\hat{\Glie})$. On espère donc obtenir un nouvel éclairage sur l'isomorphisme $Z(\hat{\Glie}) \simeq \mathcal{W}({}^L \Glie)$ en étudiant les connections entre $Z_q(\hat{\Glie})$, $\text{Rep} \, U_q(\hat{\Glie})$ et la $\mathcal{W}$-algèbre classique $q$-déformée. \\
L'idée de Frenkel et Reshetikhin \cite{frenkel} était de déformer un nouvelle fois la situation, en introduisant une double déformation $\mathcal{W}_{q,t}$ de $Z(\hat{\Glie})$. La spécialisation $\mathcal{W}_{q,1}$ à $t=1$ est le centre $Z_q(\hat{\Glie})$ et ils ont suggéré que la spécialisation $\mathcal{W}_{\varepsilon,t}$ à $q=\varepsilon$, où $\varepsilon$ désigne une certaine racine de l'unité, contient le centre $Z_t({}^L \hat{\Glie})$ de l'algèbre affine quantique $U_t({}^L \hat{\Glie})$ au niveau critique. $Z_t({}^L \hat{\Glie})$ étant lié à l'anneau de Grothendieck $\text{Rep} \, U_t({}^L \hat{\Glie})$ des représentations de dimension finie de $U_t({}^L \hat{\Glie})$, la $\mathcal{W}$-algèbre $\mathcal{W}_{q,t}$ apparaît ainsi interpoler les anneaux de Grothendieck des représentations de dimension finie des algèbres affines quantiques associées à $\hat \Glie$ et ${}^L \hat{\Glie}$. En particulier cela suggère que ces représentations doivent être d'une certaines manières liées. On peut trouver dans \cite{frenkel} des exemples de telles relations. \\

C'est, motivés par les questions précédentes, que Frenkel et Hernandez \cite{hernandez} introduisent la dualité de Langlands pour les représentations de dimension finie de $\Glie$ et ${}^L \Glie$, dans la situation où $\Glie$ est de type finie. \\
Cette dualité, inspirée par la correspondance de Langlands géométrique, se trouve être exactement la dualité (sans rapport a priori avec le programme de Langlands) établie dix ans plus tôt par Littelmann \cite{littelmann}.  L'approche dans \cite{hernandez} est toutefois différente de celle de Littelmann. Dans l'esprit de \cite{frenkel}, ils ont introduit une double déformation $\widetilde{U}_{q,t} (\Glie)$ de $\widetilde{U}(\Glie)$, l'algèbre enveloppante de $\Glie$ sans les relations de Serre, et conjecturé que cette double déformation permettait d'expliquer la dualité de Langlands pour les représentations de $\Glie$ et ${}^L \! \Glie$. \\

Historiquement, les groupes quantiques ont été définis par Drinfeld \cite{drinfeld, drinfeld-icm} et Jimbo \cite{Jimbo}. Drinfeld les définit comme des déformations formelles $U_h(\Glie)$ des algèbres enveloppantes $U(\Glie)$, tandis que Jimbo définit leurs versions rationnelles $U_q(\Glie)$. L'algèbre $U_h(\mathfrak{sl}_2)$ avait été construite par Kulish et Reshetikhin \cite{reshetikhin}. \\
Le terme déformation formelle a dans ce cas la signification suivante : à une algèbre $A$ sur $\CC$, on associe une $\CC[[h]]$-algèbre que l'on note $A[[h]]$. En tant que $\CC[[h]]$-module, $A[[h]]$ est l'ensemble des séries formelles à coefficients dans $A$ :
$$ \sum_{n \in \NN} a_n \: \! h^n \, , \quad \text{avec } a_n \in A \, . $$
C'est alors le produit de $A[[h]]$ qui porte la déformation, dans le sens où l'on doit retrouver celui de $A$ quand $h = 0$. On pourra consulter à ce sujet \cite[III]{guichardet}, ainsi que \cite[XVI]{kassel} et \cite[6.1]{chari}. \\

Les groupes quantiques d'interpolation $\widetilde{U}_{q,t} (\Glie)$ introduits par Frenkel et Hernandez dépendent de deux paramètres $q$ et $t$. Cet article propose de prendre le point de vue des déformations formelles. Pour cela, il nous faudra considérer des doubles déformations d'algèbres, selon des paramètres formels $h$ et $h'$. À une algèbre $A$, on associe donc une algèbre $A[[h,h']]$, dont les éléments sont maintenant des séries formelles à deux indéterminées et à coefficients dans $A$
$$ \sum_{n,m \in \NN} a_{n,m} \, h^n (h')^m \, , \quad \text{avec } a_{n,m} \in A \, . $$
On construit dans cet article une classe, paramétrée par un entier $g \in \NN_{\geq 1}$, de déformations formelles $\Uhh$ de $U(\mathfrak{sl}_2)$ selon les paramètres $h$ et $h'$. Ces déformations sont les groupes quantiques d'interpolation de Langlands de rang $1$ (i.e. pour $\Glie = \mathfrak{sl}_2$). Il s'agit d'une première étape dans la construction de groupes quantiques d'interpolation de Langlands $\widetilde{U}_{h,h'}(\Glie,g)$ associés à une algèbre de Kac-Moody (symétrisable), qui apparaîtront dans un prochain article. \\

L'un des objectifs des groupes quantiques d'interpolation de Langlands est de donner une explication de la dualité de Langlands pour les représentations d'une algèbre de Kac-Moody $\Glie$ et de sa duale ${}^L \Glie$. \\
On attend que, pour certaines valeurs de $g$, $\widetilde{U}_{h,h'}(\Glie,g)$ interpolent les groupes quantiques $\widetilde{U}_h(\Glie)$ et $\widetilde{U}_{g' h'}(^{L} \Glie)$, où $g'$ est un entier qui dépend de $g$ et $\Glie$ : la limite $h' \to 0$ de $\widetilde{U}_{h, h'}(\Glie,g)$ est $\widetilde{U}_h(\Glie)$ et la spécialisation à $Q := \exp h = \varepsilon$, avec $\varepsilon$ une racine $(2g)$-ième primitive de l'unité, admet comme sous-quotient $\widetilde{U}_{g' h'}(^{L} \Glie)$.
$$ \xymatrix{
&& \widetilde{U}_{h,h'}(\Glie,g) \ar[lld]_-{\lim_{h' \to 0 \ }} \ar[rrd]^-{\lim_{Q \to \varepsilon}} && \\
\widetilde{U}_h(\Glie) && && \widetilde{U}_{g' h'}({}^L \Glie)
} $$
Dans cet article, cette conjecture est résolue au rang 1 : voir le théorème \ref{thm_uhhinterpolation}. \\

L'interpolation ci-dessus induit une correspondance $(\Glie,g) \longleftrightarrow ({}^L \Glie, g')$, qui rappelle la dualité $S$ pour les théories de jauge. Le paramètre $g$ y jouerait le rôle de la constante de couplage. On pourra consulter \cite{frenkel-bourbaki} à propos des théories de jauge et de la dualité de Langlands. \\

Pour $\lambda \in P'$, on conjecture (voir aussi \cite[conjecture 1]{hernandez}) l'existence d'une double déformation de $L(\lambda)$ où les actions de $\Glie$ et ${}^L \Glie$ seraient toutes deux contenues. Plus précisément on conjecture l'existence d'une représentation $L^{h,h'}(\lambda,g)$ de $\widetilde{U}_{h,h'}(\Glie)$, qui interpole la représentation $L^h(\lambda)$ et une représentation Langlands $g$-duale ${}^L \! L^{h'}(\lambda,g)$ : la limite $h' \to 0$ de $L^{h,h'}(\lambda,g)$ est la représentation $L^h(\lambda)$, et la spécialisation à $Q = \varepsilon$ contient la représentation ${}^L \! L^{g'h'}(\lambda,g)$.
\begin{equation} \label{eq_reprinter}
\xymatrix{
&& L^{h,h'}(\lambda,g) \ar[lld]_-{\lim_{h' \to 0 \ }} \ar[rrd]^-{\lim_{Q \to \varepsilon}} && \\
L^h(\lambda) && && \quad {}^L \! L^{h'}(\lambda,g) \ \supset \ L^{g'h'}(\lambda')
} \end{equation}
Cette conjecture est dans cet article résolue en rang 1 : voir le théorème \ref{thm_reprinter}. \\

Pour établir un parallèle avec les groupes quantiques usuels, on peut penser, dans le cas $\Glie$ de type fini et simple, à $\widetilde{U}_{q,t}(\Glie)$ comme une version spécialisée de $\widetilde{U}_{h,h'}(\Glie,r)$ (on rappelle que $r$ désigne la longueur maximale des racines de $\Glie$). Toutefois alors qu'on peut retrouver $U_q(\Glie)$ comme sous-algèbre de $U_h(\Glie)$, il n'est pas vrai que $\widetilde{U}_{q,t}(\Glie)$ apparaît comme sous-algèbre de $\widetilde{U}_{h,h'}(\Glie,r)$. Les groupes quantiques d'interpolation proposés par Frenkel et Hernandez sont en fait notablement différents : ils nécessitent notamment beaucoup plus de générateurs pour être définis, voir par exemple la remarque \ref{rem_hernandezgen}. \\
On trouvera dans \cite{kassel-biquant} un autre exemple de doubles déformations de structure algébrique provenant des algèbres de Lie. On peut également citer \cite{multi} où des groupes quantiques à plusieurs paramètres ont été définis. \\

On donnera tout au long de l'article des comparaisons entre les groupes quantiques d'interpolation définis ici et ceux définis dans \cite{hernandez}. Notons déjà que le principal avantage de cette nouvelle définition réside dans le fait qu'ils sont des déformations formelles de $\widetilde{U}(\Glie)$. Par ailleurs :
\begin{itemize}
\item[\textbullet] on obtient dans cet article une définition plus claire et plus maniable des groupes quantiques d'interpolation de Langlands;
\item[\textbullet] ce travail permettra, en toute généralité, de définir un groupe quantique d'interpolation associé à une algèbre de Kac-Moody $\Glie$ symétrisable et à un paramètre $g \in \NN_{\geq 1}$ (dans un prochain article);
\item[\textbullet] les outils et résultats de la théorie des déformations formelles sont, dans notre cadre, utilisables pour l'étude des groupes quantiques d'interpolation.
\end{itemize}

\subsection*{}
On décrit maintenant l'organisation de l'article. Notons que tous les résultats présentés à partir de la section \ref{section_def} sont nouveaux.

\begin{itemize}
\item[\textbullet] Dans la section \ref{section_deformations}, on expose une partie de la théorie des déformations formelles dans le cadre de plusieurs paramètres de déformation. Les théorèmes \ref{thm_hoch} et \ref{thm_hochrep} concluent cette section.

\item[\textbullet] Dans la section \ref{section_def}, on donne et on explique la définition des groupes quantiques d'interpolation $\Uhh$.

\item[\textbullet] Dans la section \ref{section_prop}, on établit différentes propriétés des groupes quantiques d'interpolation de $\Uhh$. On prouve notamment qu'ils sont des déformations formelles triviales de $\Uc$ selon $h$ et $h'$, de $\Uh$ selon $h'$ et de $\Urh{\: \! h'}$ selon $h$ : voir le théorème \ref{thm_uhhdefor}. \\
Ces différentes déformations peuvent être représentées par le diagramme commutatif
$$ \xymatrix{
&& \Uhh \ar[dll]_-{\lim_{h' \to 0} \ } \ar[drr]^-{\lim_{h \to 0}} \ar[dd]^-{\lim_{h,h' \to 0}} && \\
\Uh \ar[drr]_-{\lim_{h \to 0}} && && \Urh{\: \! h'} \ar[dll]^-{\ \lim_{h' \to 0}} \\
&& \Uc &&
} $$
Deux des points techniques clés de cet article sont la proposition \ref{prop_uconst}, ainsi que le lemme \ref{lem_fond}, démontrés respectivement au tout début et à la fin de la section \ref{section_prop}.

\item[\textbullet] Dans la section \ref{section_repr} on étudie la théorie des représentations de rang fini de $\Uhh$, qui s'avère être similaire à celle de $\Uc$ et $\Uh$. On prouve l'existence de doubles déformations pour tout $g \in \NN_{\geq 1}$ des représentations irréductibles de dimension finie et des modules de Verma de $\mathfrak{sl}_2$ : voir les propositions \ref{prop_vermadescr} et \ref{prop_indecdescr}. \\
Le théorème \ref{thm_rephh} établit plusieurs propriétés de la catégorie des représentations de rang fini de $\Uhh$. Notamment, on prouve qu'elle possède la propriété de Krull-Schmidt et on classifie ces objets indécomposables.

\item[\textbullet] Dans la section \ref{section_langlands}, on établit l'existence d'une double déformation $L^{h,h'}(n,g)$ possédant la propriété d'interpolation \eqref{eq_reprinter}, résolvant ainsi la conjecture \cite[conjecture 1]{hernandez} en rang 1 pour tout $g \in \NN_{\geq 1}$ : voir le théorème \ref{thm_reprinter}. \\
Le théorème \ref{thm_reprinter2} donne un résultat analogue pour les modules de Verma. \\
Enfin on prouve que $\Uhh$ interpole les groupes quantiques $\Uh$ et $\Urh{g'h'}$ : voir le théorème \ref{thm_uhhinterpolation}.
$$ \xymatrix{
&& \Uhh \ar[lld]_-{\lim_{h' \to 0 \ }} \ar[rrd]^-{\lim_{Q \to \varepsilon}} && \\
\Uh && && \Urh{g'h'}
} $$
\end{itemize}

\section*{Conventions}
Toutes les algèbres et anneaux considérés seront supposés associatifs et unitaires, avec $1 \neq 0$. Un morphisme d'algèbre (ou d'anneau) envoie l'unité sur l'unité. \\
$\Bbbk$, sans précision supplémentaire, désignera un anneau commutatif. \\
Si $X$ est un ensemble quelconque, fini ou infini, $\langle X \rangle$ désignera le monoïde libre engendré par $X$, i.e. l'ensemble des mots (y compris le mot vide), dont les lettres sont des éléments de $X$, muni de la concaténation comme opération. $\Bbbk \langle X \rangle$ désigne la $\Bbbk$-algèbre du monoïde $\langle X \rangle$. C'est l'algèbre associative unitaire libre engendrée par $X$. \\
Une somme indexée par un ensemble vide est nulle, un produit indexé par un ensemble vide vaut $1$.

\section{Déformations Formelles} \label{section_deformations}
La théorie des déformations formelles d'algèbres a été introduite par Gerstenhaber dans \cite{gerstenhaber}. La théorie est construite principalement pour un seul paramètre de déformation, bien que dans \cite{gerstenhaber} il est indiqué de quelle manière certains résultats peuvent être généralisés. On se propose de donner dans cette section une présentation d'une partie de la théorie dans le cadre de plusieurs paramètres de déformation. L'exposition qui suit est une généralisation naturelle de celles que l'on peut trouver, pour un seul paramètre, dans par exemple \cite[III]{guichardet}, ainsi que \cite[XVI]{kassel} et \cite[6.1]{chari}. On prendra soin de détailler les arguments spécifiques à la généralisation, les autres seront rappelés.

\subsection{Premières définitions}
Soit un entier $s \geq 1$. On note $\Bbbk[[h_1,h_2, \dots, h_s]]$, ou encore $\Bbbk[[\bar{h}]]$ la $\Bbbk$-algèbre commutative des séries formelles à $s$ indéterminées $h_1, \dots, h_s$. On note $\Bbbk((h_1, h_2, \dots, h_s))$, ou encore $\Bbbk((\bar{h}))$, le corps de fonctions de $\Bbbk[[\bar{h}]]$. $\Bbbk((\bar{h}))$ est l'algèbre des séries de Laurent à $s$ indéterminées $h_1,h_2, \dots, h_s$, c'est aussi le localisé de $\Bbbk[[\bar{h}]]$ par rapport aux éléments $h_1,h_2, \dots, h_s$. \\ \\
$\Bbbk[[\bar{h}]]$ est une algèbre graduée en considérant le degré total d'un monôme
$$ \text{deg} \big( h_1^{n_1} h_2^{n_2} \cdots h_s^{n_s} \big) \ := \ n_1 + n_2 + \cdots + n_s \, . $$
On note par ailleurs
$$ (\bar{h})  := \big( h_1, h_2, \dots, h_s \big) $$
l'idéal de $\Bbbk[[\bar h]]$ engendré par les éléments $h_1,h_2, \dots, h_s$. \\
La filtration associée à la graduation est également celle induite par l'idéal $(\bar h)$ et fait de $\Bbbk[[\bar{h}]]$ une algèbre complète et séparée. On l'appelle la filtration $\bar h$-adique (la topologie associée est appelée la topologie $\bar h$-adique). \\
La filtration sur un $\Bbbk[[\bar h]]$-module $M$, induite par celle de $\Bbbk[[\bar{h}]]$, est aussi appelée la filtration $\bar h$-adique (la topologie associée sur $M$ est appelée la topologie $\bar h$-adique). \\

Pour $V$ un $\Bbbk$-module, on définit un nouveau $\Bbbk$-module, que l'on note $V[[h_1, h_2, \dots, h_s]]$ ou encore $V[[\bar{h}]]$, comme l'espace des séries formelles à coefficients dans $V$
$$ \sum_{n_1, n_2, \dots, n_s \, \geq \, 0} v_{n_1,n_2, \dots, n_s} \, h_1^{n_1} h_2^{n_2} \cdots h_s^{n_s} \, , \quad \text{avec } \ v_{n_1,n_2, \dots, n_s} \in V . $$
Pour alléger l'écriture, introduisons les notations suivantes : $\bar n = (n_1, n_2, \dots, n_s)$ désignera un $s$-uplet de $\NN^s$, et $h^{\bar{n}}$ le monôme $h_1^{n_1} h_2^{n_2} \cdots h_s^{n_s}$. \\
Un élément de $V[[\bar{h}]]$ peut alors s'écrire sous la forme suivante :
$$ \sum_{\bar{n} \in \NN^s} v_{\bar n} \: \! h^{\bar{n}} \, , \quad \text{avec } \ v_{\bar{n}} \in V . $$

L'espace vectoriel $V[[\bar{h}]]$ est naturellement muni d'une structure de $\Bbbk[[\bar{h}]]$-module :
$$ \Big( \sum_{\bar{p} \in \NN^s} x_{\bar p} \, h^{\bar{p}} \Big) . \Big( \sum_{\bar{q} \in \NN^s} v_{\bar q} \, h^{\bar{q}} \Big) \ := \ \sum_{\bar{n} \in \NN^s} \Bigg( \sum_{\bar{p} + \bar{q} = \bar{n}} x_{\bar{p} \, } .v_{\bar q} \Bigg) h^{\bar{n}} \, , $$
avec $x_{\bar{p}} \in \Bbbk$ et $v_{\bar{q}} \in V$. \\
On définit une structure de $\Bbbk[[\bar h]]$-module gradué sur $V[[\bar h]]$ en posant
$$ \text{deg} \big( h_1^{n_1} h_2^{n_2} \cdots h_s^{n_s} \big) \ := \ n_1 + n_2 + \cdots + n_s \, . $$
La filtration associée à cette graduation est la filtration $\bar h$-adique et fait de $V[[\bar{h}]]$ un $\Bbbk[[\bar{h}]]$-module complet et séparé. \\

Le localisé du $\Bbbk[[\bar{h}]]$-module $V[[\bar{h}]]$ par rapport aux éléments $h_1, h_2, \dots, h_s$ est un espace vectoriel sur $\Bbbk((\bar{h}))$, que l'on notera $V((h_1, h_2, \dots, h_s))$, ou encore $V((\bar{h}))$. Il s'agit de l'espace des séries de Laurent à coefficients dans $V$
$$ \sum_{\bar{n} \, \succeq - \overline{N}} v_{\bar n} \: \! h^{\bar{n}} \, , \quad \text{avec } \ v_{\bar{n}} \in V , \text{ et } \overline{N} \in \NN^s . $$
L'ordre partiel $\preceq$ sur $\ZZ^s$ que l'on utilise est défini par $\bar{n} \preceq \bar{m}$ si $\bar{m} - \bar{n} \in \NN^s$. \\

Si $A$ est une $\Bbbk$-algèbre (pas nécessairement commutative), on peut munir $A[[\bar{h}]]$ d'une structure de $\Bbbk[[\bar{h}]]$-algèbre :
$$ \Big( \sum_{\bar{p} \in \NN^s} a_{\bar p} \, h^{\bar{p}} \Big) \cdot \Big( \sum_{\bar{q} \in \NN^s} b_{\bar q} \, h^{\bar{q}} \Big) \ := \ \sum_{\bar{n} \in \NN^s} \Bigg( \sum_{\bar{p} + \bar{q} = \bar{n}} a_{\bar{p} \, } b_{\bar q} \Bigg) h^{\bar{n}} \, , $$
où $a_{\bar{p}}$ et $b_{\bar{q}}$ sont des éléments de $A$. \\

On rappelle que $\Bbbk \langle X \rangle$ désigne la $\Bbbk$-algèbre du monoïde libre $\langle X \rangle$.

\begin{definition}
Soit $X$ un ensemble et $\mathcal{R}$ une partie de $\Bbbk \langle X \rangle[[\bar{h}]]$. La $\Bbbk[[\bar{h}]]$-algèbre topologiquement engendrée par $X$ et définie par les relations $\mathcal{R}$, que l'on note
$$ \Bbbk \langle X \rangle[[\bar{h}]] \, / \, \overline{(\mathcal{R})}^{\: \! \bar h} \, , $$
est le quotient de $\Bbbk \langle X \rangle[[\bar{h}]]$ par l'adhérence (topologique) de l'idéal bilatère engendré par $\mathcal{R}$ (ou encore le plus petit idéal bilatère fermé contenant $\mathcal{R}$).
\end{definition}

La filtration induite sur le quotient $\Bbbk \langle X \rangle[[\bar{h}]] \, / \, \overline{(\mathcal{R})}^{\: \! \bar h}$ par celle de $\Bbbk \langle X \rangle[[\bar{h}]]$ est la filtration $\bar h$-adique, et fait de $\Bbbk \langle X \rangle[[\bar{h}]] \, / \, \overline{(\mathcal{R})}^{\: \! \bar h}$ une algèbre complète et séparée. La topologie $\bar h$-adique de $\Bbbk \langle X \rangle[[\bar{h}]] \, / \, \overline{(\mathcal{R})}^{\: \! \bar h}$ est la topologie quotient. \\

Remarquons les propriétés suivantes. Leur démonstration est immédiate.

\begin{proposition} \label{prop_props}
\begin{itemize}
\item[1)] Si $V$ et $W$ sont deux $\Bbbk$-modules, on a un isomorphisme canonique de $\Bbbk[[\bar{h}]]$-module entre $\text{Hom}_{\Bbbk[[\bar{h}]]} \big( V[[\bar{h}]],W[[\bar{h}]] \big)$ et $\text{Hom}_{\Bbbk}(V,W)[[\bar{h}]]$.

\item[2)] Dans le cas où $V = W$, l'isomorphisme précédent entre $\text{End}_{\Bbbk[[\bar{h}]]} \big( V[[\bar{h}]] \big)$ et $\left( \text{End}_{\Bbbk} \: \! V \right) [[\bar{h}]]$ est un isomorphisme de $\Bbbk[[\bar{h}]]$-algèbre.

\item[3)] Si $f : X \to B$ est une application de $X$ dans une $\Bbbk[[\bar{h}]]$-algèbre $B$ complète et séparée (pour la filtration $\bar{h}$-adique), alors il existe un unique morphisme de $\Bbbk[[\bar{h}]]$-algèbre $\tilde{f} : \Bbbk \langle X \rangle[[\bar{h}]] \to B$ tel que $\tilde{f}(x) = f(x)$ pour tout $x \in X$.

\item[4)] Si $\mathcal{R}$ est une partie de $\Bbbk \langle X \rangle[[\bar{h}]]$, l'ensemble des applications $f : X \to B$ telles que $\tilde f$ s'annule sur $\mathcal R$ est en bijection avec l'ensemble des morphismes de $\Bbbk[[\bar{h}]]$-algèbre de $\Bbbk \langle X \rangle[[\bar{h}]] \, / \, \overline{(\mathcal{R})}^{\: \! \bar h}$ vers $B$.
\end{itemize}
\end{proposition}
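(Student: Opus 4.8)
The plan is to treat all four items through a single structural observation: each of $V[[\bar h]]$ and $\Bbbk\langle X\rangle[[\bar h]]$ is $\bar h$-adically complete and separated, and is \emph{topologically generated} over $\Bbbk[[\bar h]]$ by the constant series $V$ (respectively by the words of $\langle X\rangle$). First I would record the key lemma on which everything rests: for $\Bbbk$-modules $V,W$, any $\Bbbk[[\bar h]]$-linear map $\phi$ from $V[[\bar h]]$ to $W[[\bar h]]$ satisfies $\phi\big((\bar h)^{\, k} V[[\bar h]]\big) \subseteq (\bar h)^{\, k} W[[\bar h]]$ for every $k$, simply because $(\bar h)^k$ is an ideal and $\phi$ is $\Bbbk[[\bar h]]$-linear. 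Since the filtration step $(\bar h)^k V[[\bar h]]$ is exactly the set of series all of whose terms have total degree $\geq k$, this means $\phi$ is automatically continuous for the $\bar h$-adic topology; the same remark applies verbatim to any $\Bbbk[[\bar h]]$-algebra morphism out of $\Bbbk\langle X\rangle[[\bar h]]$. This automatic continuity, together with completeness (for existence) and separatedness (for uniqueness), is the whole engine of the proposition.

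For item 1), I would send $\phi \in \text{Hom}_{\Bbbk[[\bar h]]}(V[[\bar h]],W[[\bar h]])$ to its family of \emph{coefficient maps}: writing $\phi(v) = \sum_{\bar n} \phi_{\bar n}(v)\, h^{\bar n}$ for a constant series $v \in V$, each $\phi_{\bar n} : V \to W$ is $\Bbbk$-linear, so $\sum_{\bar n} \phi_{\bar n}\, h^{\bar n} \in \text{Hom}_{\Bbbk}(V,W)[[\bar h]]$. The inverse assigns to $\sum_{\bar n} \psi_{\bar n} h^{\bar n}$ the map $\sum_{\bar q} v_{\bar q} h^{\bar q} \mapsto \sum_{\bar n}\big( \sum_{\bar p + \bar q = \bar n} \psi_{\bar p}(v_{\bar q}) \big) h^{\bar n}$, which is well defined by completeness. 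To see the two assignments are mutually inverse I would use the key lemma: for $v = \sum_{\bar q} v_{\bar q} h^{\bar q}$ one has $v \equiv \sum_{\text{deg}\,\bar q < k} v_{\bar q} h^{\bar q} \pmod{(\bar h)^k}$, hence $\phi(v) \equiv \sum_{\text{deg}\,\bar q < k} h^{\bar q}\,\phi(v_{\bar q}) \pmod{(\bar h)^k}$ for all $k$, and separatedness forces $\phi(v) = \sum_{\bar q} h^{\bar q}\,\phi(v_{\bar q})$. Both assignments are visibly $\Bbbk[[\bar h]]$-linear. For item 2), taking $V=W$, I would check that composition on the left corresponds to the Cauchy product on the right: evaluating $\phi\circ\psi$ on a constant $v$ and expanding by the formula just obtained gives exactly $\sum_{\bar n}\big(\sum_{\bar p+\bar q=\bar n}\phi_{\bar p}\circ\psi_{\bar q}\big)h^{\bar n}$, while $\mathrm{id}$ corresponds to the constant series $\mathrm{id}_V$, so the bijection is an isomorphism of unital $\Bbbk[[\bar h]]$-algebras.

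For item 3), I would first extend $f : X \to B$ to a $\Bbbk$-algebra map on words via the ordinary universal property of the free algebra $\Bbbk\langle X\rangle$, then extend $\bar h$-adically by $\tilde f\big(\sum_{\bar n} a_{\bar n} h^{\bar n}\big) := \sum_{\bar n} h^{\bar n}\, \tilde f(a_{\bar n})$; this series converges in $B$ because $B$ is complete and $h^{\bar n}\to 0$, and multiplicativity and $\Bbbk[[\bar h]]$-linearity are checked directly. Uniqueness is the automatic continuity above: any such morphism is continuous, hence determined by its restriction to $X$. For item 4), I would invoke the universal property of the quotient. If $\tilde f$ vanishes on $\mathcal R$, then as an algebra morphism it vanishes on the two-sided ideal $(\mathcal R)$, and being continuous into the separated algebra $B$ it vanishes on the closure $\overline{(\mathcal{R})}^{\: \! \bar h}$; hence $\tilde f$ factors uniquely through $\Bbbk\langle X\rangle[[\bar h]]\,/\,\overline{(\mathcal{R})}^{\: \! \bar h}$. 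Conversely, any morphism out of the quotient pulls back along the continuous projection and restricts on $X$ to such an $f$, and item 3) makes these passages mutually inverse. The only point requiring genuine care — rather than routine bookkeeping — is this automatic continuity of $\Bbbk[[\bar h]]$-linear maps and algebra morphisms; once it is in hand, completeness yields existence and separatedness yields uniqueness, and no new difficulty arises from having $s$ indeterminates instead of one, since the total degree $\text{deg}\,\bar n = n_1 + \cdots + n_s$ plays the role of the single exponent and the filtration argument is unchanged.
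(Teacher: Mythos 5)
Your proof is correct, and it coincides with what the paper intends: the paper gives no argument at all for this proposition (it is stated as having an immediate proof), and what you have written is precisely the routine verification left implicit — the identification of $(\bar h)^k V[[\bar h]]$ with the series whose terms have total degree at least $k$, hence automatic continuity of $\Bbbk[[\bar h]]$-linear maps and of algebra morphisms, with completeness supplying existence and separatedness supplying uniqueness. No gap: your handling of the one point needing care (that finitely many monomials of each total degree exist, so the filtration step really is the degree-$\geq k$ series, and morphisms therefore kill the closure of the ideal generated by $\mathcal{R}$) is exactly right, and the argument is uniform in the number $s$ of indeterminates as claimed.
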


\subsection{Déformations d'algèbres}
Soit $A$ est une $\Bbbk$-algèbre. Le produit de $A[[\bar h]]$ étend celui de $A$ au sens suivant : l'isomorphisme $\Bbbk$-linéaire canonique entre $A[[\bar{h}]] / (\bar{h} = 0)$ (on quotiente par l'idéal bilatère de $A[[\bar h]]$ engendré par les éléments $h_1, h_2, \dots, h_s$) et $A$ est un isomorphisme de $\Bbbk$-algèbre. \\
Il peut exister d'autres produits sur $A[[\bar{h}]]$ satisfaisant cette propriété, ce qui motive la définition suivante.

\begin{definition}
On appelle déformation formelle (à $s$ paramètres) d'une $\Bbbk$-algèbre $A$ une structure de $\Bbbk[[\bar{h}]]$-algèbre sur le $\Bbbk[[\bar{h}]]$-module $A[[\bar{h}]]$ telle que $1 \in A$ est l'élément neutre de $A[[\bar{h}]]$, et telle que l'isomorphisme $\Bbbk$-linéaire canonique entre $A[[\bar{h}]] / (\bar{h} = 0)$ et $A$ est un isomorphisme de $\Bbbk$-algèbre.
\end{definition}

Soit $V$ un $\Bbbk$-module. La donnée d'une application $\Bbbk[[\bar{h}]]$-bilinéaire $u$ de $V[[\bar{h}]] \times V[[\bar{h}]]$ vers $V[[\bar{h}]]$ est équivalente à la donnée d'une famille $(u_{\bar{n}})_{\bar{n} \in \NN^s}$ d'applications $\Bbbk$-bilinéaires de $V \times V$ dans $V$. Plus explicitement, on a :
$$ u \bigg( \sum_{\bar{p} \in \NN^s} v_{\bar{p}} \, h^{\bar{p}} \, , \sum_{\bar{q} \in \NN^s} w_{\bar{q}} \, h^{\bar{q}} \bigg) \ = \ \sum_{\bar{n} \in \NN^s} \Bigg( \sum_{\bar{p} + \bar{q} + \overline{r} = \bar{n}} u_{\overline{r}} \big( v_{\bar{p}}, w_{\bar{q}} \big) \Bigg) h^{\bar{n}} \, . $$

Notons $\mu$ le produit d'une $\Bbbk$-algèbre $A$ et $\mu_{\bar h}$ une application $\Bbbk[[\bar{h}]]$-bilinéaire sur $A[[\bar{h}]]$. En notant $\mu_{\bar n}$ les applications $\Bbbk$-bilinéaires associées à $\mu_{\bar h}$, l'associativité de $\mu_{\bar h}$ est équivalente à la condition suivante :
\begin{equation} \label{eq_assoc}
\sum_{\bar{p} + \bar{q} = \bar{n}} \mu_{\bar{p}} \Big( \mu_{\bar{q}} (a,b), c \Big) - \, \mu_{\bar{p}} \Big( a, \mu_{\bar{q}}(b,c) \Big) \ = \ 0 \, , \quad \forall \, \bar{n} \in \NN^s, \ \forall \, a,b,c \in A \, . 
\end{equation}
Supposons que $\mu_{\bar h}$ munisse $A[[\bar{h}]]$ d'une structure de $\Bbbk[[\bar{h}]]$-algèbre (on suppose donc l'associativité de $\mu_{\bar h}$ et que $1 \in A$ est un élément neutre de $\mu_{\bar h}$), alors cette structure est une déformation formelle de $A$ si et seulement si $\mu_0 = \mu$. \\

On appelle déformation formelle constante la structure de $\Bbbk[[\bar{h}]]$-algèbre canonique sur $A[[\bar{h}]]$, caractérisée par $\mu_{\bar{n}} = 0$ pour $\bar{n} \succ 0$.

\begin{definition}
Deux déformations formelles d'une $\Bbbk$-algèbre $A$ sont dites équivalentes si il existe un isomorphisme de $\Bbbk[[\bar{h}]]$-algèbre $\phi : A[[\bar{h}]] \to A[[\bar{h}]]$ entre les deux structures, qui induit l'identité sur $A[[\bar{h}]] / (\bar{h} = 0)$.
\end{definition}

On dira qu'une déformation formelle est triviale si elle est équivalente à la déformation formelle constante.

\subsection{Déformations de représentations}
\begin{definition}
Soit $A$ une $\Bbbk$-algèbre. On appelle représentation d'une déformation formelle de $A$, que l'on note $(V,\pi_{\bar h})$, la donnée d'un $\Bbbk$-module $V$ et d'un morphisme de $\Bbbk[[\bar{h}]]$-algèbre $\pi_{\bar h} : A[[\bar h]] \to \text{End}_{\Bbbk[[\bar{h}]]} (V[[\bar h]])$, où $A[[\bar{h}]]$ est considéré avec sa déformation formelle. \\
 $(V,\pi_{\bar h})$ est dit de rang fini (resp. de rang $r \in \NN$) si $V$ est un module libre de rang fini (resp. de rang $r$) sur $\Bbbk$.
\end{definition}

On appellera sous-représentation d'une représentation $V[[\bar h]]$ une représentation de la forme $W[[\bar h]]$, avec $W$ un sous-$\Bbbk$-module de $V$. On définit le quotient de $V[[\bar h]]$ par $W[[\bar h]]$ comme étant la représentation $(V/W)[[\bar h]]$. \\

Notons $\mu_{\bar h}$ le produit d'une déformation formelle de $A$. En utilisant le point 1) de la proposition \ref{prop_props}, on voit que la donnée d'une représentation $(V,\pi_{\bar h})$ est équivalente à la donnée d'une famille $\big( \pi_{\bar{n}} \big)_{\bar{n} \in \NN^s}$ d'applications $\Bbbk$-linéaires de $A$ dans $\text{End}_{\Bbbk}(V)$ qui vérifient :
\begin{equation} \label{eq_reprcg}
\sum_{\bar{p} + \bar{q} = \bar{n}} \Big[ \pi_{\bar{p}} \big( \mu_{\bar{q}}(a,b) \big)  \, - \ \pi_{\bar{p}}(a) \circ \pi_{\bar{q}}(b) \Big] \ = \ 0 \, , \quad \ \forall \, a, b \in A \, , \ \forall \, \bar{n} \in \NN^s \, .
\end{equation}
On voit que la représentation $(V,\pi_{\bar h})$ induit une représentation $(V,\pi_0)$ de $A$. On dira que $(V,\pi_{\bar h})$ est une déformation formelle d'une représentation $(V,\pi)$ de $A$ si $\pi_0 = \pi$.  \\

On fera la distinction entre représentation d'une déformation formelle et module d'une déformation formelle : un module d'une déformation formelle $A[[\bar h]]$ de $A$ désignera un $A[[\bar{h}]]$-module. En d'autres mots, une représentation est la donnée d'une structure de $A[[\bar{h}]]$-module sur un espace de la forme $V[[\bar{h}]]$. \\

En utilisant le point 1) encore de la proposition \ref{prop_props}, on voit que deux représentations $(V,\pi_{\bar h})$ et $(W,\rho_{\bar h})$ sont isomorphes en tant que $A[[\bar h]]$-modules, si et seulement si il existe une famille $\big( u_{\bar{n}} \big)_{\bar{n} \in \NN^s}$ d'applications $\Bbbk$-linéaires de $V$ vers $W$ telle que $u_0$ est bijective et telle que
\begin{equation}
\sum_{\bar{p} + \bar{q} = \bar{n}} \Big[ u_{\bar{p}} \circ \pi_{\bar{q}}(a) \, - \ \rho_{\bar{p}}(a) \circ u_{\bar{q}} \Big] \ = \ 0 \, , \quad \ \forall \, a \in A \, , \ \forall \, \bar{n} \in \NN^s \, .
\end{equation}
On voit que si les deux représentations sont isomorphes, $u_0$ est un isomorphisme entre les représentations de $A$ induites $(V,\pi_0)$ et $(W,\rho_0)$. \\
On dira que deux représentations $(V,\pi_{\bar h})$ et $(W,\rho_{\bar h})$ sont équivalentes si l'on ajoute $u_0 = \text{id}$ aux conditions précédentes. \\

Considérons la déformation formelle constante de $A$. Dans ce cas, d'après \eqref{eq_reprcg}, la donnée d'une représentation $(V,\pi_{\bar h})$ est équivalente à la donnée d'une famille $\big( \pi_{\bar{n}} \big)_{\bar{n} \in \NN^s}$ d'applications $\Bbbk$-linéaires de $A$ dans $\text{End}_{\Bbbk}(V)$ qui vérifient
\begin{equation} \label{eq_repr}
\pi_{\bar{n}} (ab) \ = \ \sum_{\bar{p} + \bar{q} = \bar{n}} \, \pi_{\bar{p}}(a) \circ \pi_{\bar{q}}(b) \, , \quad \ \forall \, a, b \in A \, , \ \forall \, \bar{n} \in \NN^s \, .
\end{equation}
La déformation formelle constante d'une représentation $(V,\pi)$ de $A$ est définie par $\pi_0 = \pi$ et $\pi_{\bar{n}} = 0$ pour $\bar{n} \succ 0$. \\
Une déformation formelle sera dite triviale si elle est équivalente à une déformation formelle constante. \\

On peut définir de manière similaire la notion de bimodule $M[[\bar h]]$ d'une déformation formelle $A[[\bar h]]$. Toutes les définitions précédentes s'adaptent naturellement. \\

Notons que si $V$ est une représentation d'une $\Bbbk$-algèbre $A$. Alors $\text{End}_{\Bbbk} V$ est naturellement muni d'une structure de $A$-bimodule :
$$ (a.f) (x) \ := \ a. \big( f(x) \big) \, , \quad \ (f.a) (x) \ := \ f(a.x) \, , \quad \text{ avec } \ f \in \text{End}_{\Bbbk} V, \ a \in A, \ x \in V \, . $$

La démonstration du lemme suivant est immédiate.

\begin{lemma} \label{lem_defor_end}
Soient $A$ une $\Bbbk$-algèbre et $V$ une représentation de $A$. Si $V[[\bar h]$ est une déformation formelle triviale de $V$, alors $\text{End}_{\Bbbk[[\bar h]]} (V[[\bar h]])$ est une déformation formelle triviale du $A$-bimodule $\text{End}_{\Bbbk} V$.
\end{lemma}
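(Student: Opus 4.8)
Le plan est d'utiliser directement l'hypoth\`ese de trivialit\'e pour produire, par conjugaison, une \'equivalence entre la d\'eformation du bimodule $\text{End}_\Bbbk V$ et sa d\'eformation constante. Premi\`erement, je traduirais l'hypoth\`ese : dire que $V[[\bar h]]$, muni de l'action $\pi_{\bar h}$, est une d\'eformation formelle triviale de la repr\'esentation $(V,\pi_0)$ de $A$ signifie qu'elle est \'equivalente \`a sa d\'eformation constante. Il existe donc une famille $(u_{\bar n})_{\bar n \in \NN^s}$ d'endomorphismes $\Bbbk$-lin\'eaires de $V$, avec $u_0 = \text{id}$, telle que l'op\'erateur $\Phi := \sum_{\bar n \in \NN^s} u_{\bar n} \, h^{\bar n} \in \text{End}_{\Bbbk[[\bar h]]}(V[[\bar h]])$ v\'erifie $\Phi \circ \pi_{\bar h}(a) = \pi_0(a) \circ \Phi$ pour tout $a \in A$, o\`u $\pi_0(a)$ d\'esigne l'action non d\'eform\'ee \'etendue $\Bbbk[[\bar h]]$-lin\'eairement \`a $V[[\bar h]]$. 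Comme $u_0 = \text{id}$, l'op\'erateur $\Phi$ est inversible, et la relation pr\'ec\'edente se r\'e\'ecrit $\pi_{\bar h}(a) = \Phi^{-1} \circ \pi_0(a) \circ \Phi$, ainsi que $\pi_{\bar h}(a) \circ \Phi^{-1} = \Phi^{-1} \circ \pi_0(a)$.

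Deuxi\`emement, j'expliciterais la d\'eformation \`a trivialiser. Via l'isomorphisme du point 2) de la proposition \ref{prop_props}, on identifie $E := \text{End}_{\Bbbk[[\bar h]]}(V[[\bar h]])$ \`a $(\text{End}_\Bbbk V)[[\bar h]]$ ; la structure de $A[[\bar h]]$-bimodule induite par $\pi_{\bar h}$ est donn\'ee par $a \cdot f = \pi_{\bar h}(a) \circ f$ et $f \cdot a = f \circ \pi_{\bar h}(a)$, et c'est une d\'eformation formelle du $A$-bimodule $\text{End}_\Bbbk V$. Je poserais alors $C : E \to E$, $C(f) := \Phi \circ f \circ \Phi^{-1}$. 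Puisque $\Phi$ est un automorphisme $\Bbbk[[\bar h]]$-lin\'eaire de $V[[\bar h]]$, l'application $C$ est un automorphisme $\Bbbk[[\bar h]]$-lin\'eaire de $E$, et sa r\'eduction modulo l'id\'eal $(\bar h)$ est la conjugaison par $u_0 = \text{id}$, donc $C_0 = \text{id}$.

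L'\'etape centrale est alors un calcul direct. En utilisant les deux formes de la relation d'entrelacement ci-dessus, on obtient
$$ C(a \cdot f) \ = \ \Phi \circ \pi_{\bar h}(a) \circ f \circ \Phi^{-1} \ = \ \pi_0(a) \circ \Phi \circ f \circ \Phi^{-1} \ = \ \pi_0(a) \circ C(f) \, , $$
$$ C(f \cdot a) \ = \ \Phi \circ f \circ \pi_{\bar h}(a) \circ \Phi^{-1} \ = \ \Phi \circ f \circ \Phi^{-1} \circ \pi_0(a) \ = \ C(f) \circ \pi_0(a) \, . $$
Ces deux identit\'es montrent que $C$ transforme les actions d\'eform\'ees en actions constantes (non d\'eform\'ees) de $A$ sur $\text{End}_\Bbbk V$. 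Comme $C$ est de plus un isomorphisme $\Bbbk[[\bar h]]$-lin\'eaire v\'erifiant $C_0 = \text{id}$, c'est une \'equivalence entre la structure de bimodule d\'eform\'ee et la structure de bimodule constante ; la d\'eformation de $\text{End}_\Bbbk V$ est donc triviale, ce qui conclut.

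Je ne vois pas d'obstacle s\'erieux, ce qui est conforme au fait que la d\'emonstration est annonc\'ee comme imm\'ediate : la seule pr\'ecaution est de respecter le placement \`a gauche et \`a droite dans la conjugaison, et de constater que l'\'egalit\'e $C_0 = \text{id}$ provient directement de $u_0 = \text{id}$ fourni par la trivialit\'e, et pas seulement de la bijectivit\'e de $u_0$ qui ne donnerait qu'un isomorphisme au lieu d'une \'equivalence.
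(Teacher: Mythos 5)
Votre démonstration est correcte : le papier ne donne d'ailleurs aucune preuve de ce lemme (il est simplement déclaré « immédiat »), et l'argument que vous explicitez — conjugaison $C(f) = \Phi \circ f \circ \Phi^{-1}$ par l'isomorphisme trivialisant $\Phi$, avec vérification des deux relations d'entrelacement et de $C_0 = \text{id}$ — est exactement l'argument standard attendu ici. Le soin que vous prenez à distinguer « équivalence » (qui exige $u_0 = \text{id}$) de simple « isomorphisme » est précisément le point qu'il fallait respecter.
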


Soit $A$ une $\Bbbk$-algèbre et considérons une déformation formelle $A[[\bar h]]$ de $A$. \\
On note $\mathcal{C}(A)$ la catégorie abélienne des représentations de $A$ et $\mathcal{C}^{\bar{h}}(A)$ la catégorie des représentations de la déformation formelle $A[[\bar{h}]]$. \\

On vérifie que $\mathcal{C}^{\bar{h}}(A)$ est une catégorie additive $\Bbbk[[\bar h]]$-linéaire. \\
La somme directe de deux représentations $V[[\bar h]], W[[\bar h]] \in \mathcal{C}^{\bar{h}}(A)$ est le $\CC[[\bar h]]$-module $(V \oplus W)[[\bar h]]$, qu'on munit naturellement d'une structure de représentation de $A[[\bar h]]$. \\
Une représentation indécomposable est un objet indécomposable de la catégorie $\mathcal{C}^{\bar{h}}(A)$, i.e. une représentation qui n'est pas isomorphe à la somme directe de deux représentations. \\

L'existence d'un conoyau d'un morphisme $f : V[[\bar{h}]] \to W[[\bar{h}]]$ dans $\mathcal{C}^{\bar{h}}(A)$ n'est en général pas vérifiée : par exemple si $V = W = \Bbbk$ et $f$ est la multiplication par $h$. Par conséquent $\mathcal{C}^{\bar{h}}(A)$ n'est pas une catégorie abélienne. \\

La catégorie dont les objets sont les $\Bbbk[[\bar h]]$-modules de la forme $V[[\bar h]]$, et les flèches les morphismes de $\Bbbk[[\bar h]]$-module, est d'après ce qui précède une catégorie additive $\Bbbk[[\bar h]]$-linéaire (c'est le cas particulier où $A = \Bbbk$ et $\Bbbk[[\bar h]]$ est muni de la déformation formelle constante). On montre que cette catégorie est une catégorie tensorielle, où le produit tensoriel est défini pour $V, W$ deux $\Bbbk$-modules, par
$$ V[[\bar h]] \ \tilde{\otimes} \ W[[\bar h]] \ := \ (V \otimes_{\Bbbk} W) [[\bar h]] \, . $$
$V[[\bar h]] \ \tilde{\otimes} \ W[[\bar h]]$ est un complété $\bar h$-adique de $V[[\bar h]] \otimes_{\Bbbk[[\bar h]]} W[[\bar h]]$. \\

Notons $\mathcal{A}_{\Bbbk}$ la catégorie abélienne des $\Bbbk$-algèbres et $\mathcal{A}_{\Bbbk}^{\bar{h}}$ la catégorie $\Bbbk[[\bar h]]$-linéaire additive, dont les objets sont les déformations formelles de $\Bbbk$-algèbres, et les flèches les morphismes de $\Bbbk[[\bar h]]$-algèbre.  \\
On définit deux foncteurs $\Bbbk[[\bar h]]$-linéaires
$$ \mathcal{Q}^{\bar{h}} \, : \ \mathcal{A}_{\Bbbk} \ \longrightarrow \ \mathcal{A}_{\Bbbk}^{\bar{h}} \quad \text{ et } \quad \lim_{\bar{h} \to 0} \, : \ \mathcal{A}_{\Bbbk}^{\bar{h}} \longrightarrow \ \mathcal{A}_{\Bbbk} \, , $$
qui, respectivement, à une $\Bbbk$-algèbre $A$ associe sa déformation formelle constante, et à une déformation formelle de $A$ associe $A$ (les définitions de $\mathcal{Q}^{\bar{h}}$ et $\lim_{\bar{h} \to 0}$ pour les morphismes sont les définitions naturelles). Remarquons que
$$ \lim_{\bar{h} \to 0} \, \circ \ \mathcal{Q}^{\bar{h}} \ = \ \text{id}_{\! \mathcal{A}_{\Bbbk}} \, . $$

On définit de façon similaire un foncteur $\Bbbk[[\bar h]]$-linéaire de la catégorie des représentations d'une déformation formelle de $A$ vers la catégorie des représentations de $A$ :
$$ \lim_{\bar{h} \to 0} : \ \mathcal{C}^{\bar{h}}(A) \, \longrightarrow \ \mathcal{C}(A) \, . $$

On a également un foncteur $\Bbbk[[\bar h]]$

Le foncteur $\lim_{\bar h \to 0}$, que ce soit dans les cadre des algèbres ou celui des représentations, sera appelé limite classique.

\subsection{Cohomologie de Hochschild}
\begin{definition}
Soient $A$ une $\Bbbk$-algèbre et $M$ un $A$-bimodule. \\
Pour tout $n \in \NN$, on note $C_{\Bbbk}^n(A,M)$, et on appelle l'espace des $n$-cochaînes, le $\Bbbk$-module des applications $\Bbbk$-multilinéaires de $A^n$ dans $M$ (on a par convention $C_{\Bbbk}^0(A,M) = M$). \\
On définit une application $\Bbbk$-linéaire $d^{\: \! n} : C_{\Bbbk}^n(A,M) \to C_{\Bbbk}^{n+1}(A,M)$ par
\begin{eqnarray*}
(d^n f) (a_1, \dots, a_{n+1}) & := & \ \ \quad a_1 . f(a_2, \dots, a_{n+1}) \\
&& + \ \sum_{i=1}^n (-1)^i \, f(a_1, \dots, a_i \: \! a_{j+1}, \dots, a_{n+1}) \\
&& + \ (-1)^{n+1} f(a_1, \dots, a_n) . a_{n+1} \,
\end{eqnarray*}
(pour $n=0$ et $x \in M$, on a $(d^{\: \! 0} x) \: \! a = a.x - x.a$). \\
On appelle, et on note
$$ \Big( C_{\Bbbk}^{\bullet} (A,M),d \Big) := \Bigg( \bigoplus_{n \in \NN} \, C_{\Bbbk}^n(A,M) \, , \, \sum_{n \in \NN} \, d^{\: \! n} \Bigg) \, , $$
le complexe de Hochschild de $A$ à coefficients dans $M$ (on vérifie en effet que  $d \circ d = 0$). \\
La cohomologie de $C_{\Bbbk}^{\bullet}(A,M)$ est notée $H_{\Bbbk}^{\bullet}(A,M)$ et appelée la cohomologie de Hochschild de $A$ à coefficients dans $M$.
\end{definition}

\begin{rem} \label{rem_hoch}
Fixons la $\Bbbk$-algèbre $A$. La cohomologie de Hochschild $H^{\bullet}_{\Bbbk}(A, \cdot)$ définit un foncteur de la catégorie des $A$-bimodules vers la catégorie des $\Bbbk$-modules $\NN$-gradués.
\end{rem}

Le lemme suivant exprime le fait que la cohomologie de Hochschild commute en quelque sorte avec le processus de déformation formelle constante. Sa démonstration est immédiate.

\begin{lemma} \label{lem_hoch}
Soient $A$ une $\Bbbk$-algèbre et $M$ un $A$-bimodule. En considérant $A[[\bar h]]$ et $M[[\bar h]]$ comme déformations formelles constantes, on a pour tout $n \in \NN$ un isomorphisme canonique $\Bbbk[[\bar h]]$-linéaire :
$$ \Big( H_{\Bbbk}^n(A,M) \Big) [[\bar h]] \ \simeq \ H_{\Bbbk[[h]]}^n \big( A[[\bar h]],M[[\bar h]] \big) \, . $$
\end{lemma}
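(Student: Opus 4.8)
The plan is to produce a canonical isomorphism of cochain complexes
$$\big(C_{\Bbbk}^{\bullet}(A,M),d\big)[[\bar h]] \ \simeq \ \big(C_{\Bbbk[[\bar h]]}^{\bullet}(A[[\bar h]],M[[\bar h]]),d\big)$$
and then pass to cohomology. First I would construct, for each $n$, a $\Bbbk[[\bar h]]$-linear isomorphism $\Phi_n$ between $C_{\Bbbk}^n(A,M)[[\bar h]]$ and the $n$-cochains $C_{\Bbbk[[\bar h]]}^n(A[[\bar h]],M[[\bar h]])$ of the constant deformation. Explicitly, $\Phi_n$ sends a series $\sum_{\bar m} f_{\bar m}\, h^{\bar m}$ of $\Bbbk$-multilinear maps to the unique $\Bbbk[[\bar h]]$-multilinear map whose value on constant arguments $a^{(1)},\dots,a^{(n)} \in A$ is $\sum_{\bar m} f_{\bar m}(a^{(1)},\dots,a^{(n)})\, h^{\bar m}$; on general arguments the value is then forced by $\Bbbk[[\bar h]]$-multilinearity and the Cauchy product. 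Bijectivity is checked directly (one recovers the $f_{\bar m}$ from the values on constant arguments), and conceptually $\Phi_n$ is just the multilinear avatar of point 1) of Proposition \ref{prop_props}: identifying $n$-cochains with $\Bbbk[[\bar h]]$-linear maps out of the completed tensor power $(A[[\bar h]])^{\tilde{\otimes} n} = (A^{\otimes n})[[\bar h]]$, the isomorphism $\text{Hom}_{\Bbbk[[\bar h]]}\big((A^{\otimes n})[[\bar h]], M[[\bar h]]\big) \simeq \text{Hom}_{\Bbbk}(A^{\otimes n},M)[[\bar h]]$ is exactly point 1).

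Next I would check that $\Phi_\bullet$ intertwines the coefficientwise differential $d[[\bar h]]$ on the left with the Hochschild differential $d$ on the right. This is where it is essential that we use the \emph{constant} deformations of $A$ and of the bimodule $M$: the product of $A[[\bar h]]$ and the two actions of $A[[\bar h]]$ on $M[[\bar h]]$ all reduce, on constant arguments, to the original operations of $A$ and of $M$ with no higher-order corrections. Evaluating $d^n(\Phi_n F)$ on constant arguments $a_1,\dots,a_{n+1} \in A$, each of the three contributions to the Hochschild differential acts coefficientwise, so the coefficient of $h^{\bar m}$ is precisely $(d^n f_{\bar m})(a_1,\dots,a_{n+1})$. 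Since both $d^n \circ \Phi_n$ and $\Phi_{n+1} \circ (d^n[[\bar h]])$ are $\Bbbk[[\bar h]]$-multilinear and agree on constant arguments, they coincide, so $\Phi_\bullet$ is a morphism of complexes.

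Finally I would pass to cohomology. The functor $V \mapsto V[[\bar h]]$ from $\Bbbk$-modules to $\Bbbk[[\bar h]]$-modules is, after forgetting the $\Bbbk[[\bar h]]$-structure, the product functor $\prod_{\bar n \in \NN^s}(-)$, which is exact in a module category; since the forgetful functor is exact and reflects exactness, $(-)[[\bar h]]$ is itself exact. An exact additive functor commutes with the formation of cohomology of a complex, whence $H^n\big(C_{\Bbbk}^{\bullet}(A,M)[[\bar h]]\big) \simeq \big(H_{\Bbbk}^n(A,M)\big)[[\bar h]]$; combined with the complex isomorphism $\Phi_\bullet$ this gives the stated canonical $\Bbbk[[\bar h]]$-linear isomorphism. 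The only genuinely delicate point is the first step — pinning down that the $n$-cochains of the deformation are exactly the formal series of $\Bbbk$-multilinear cochains (equivalently, the completed-tensor-power reformulation that lets one invoke Proposition \ref{prop_props}); once that identification is secured, the compatibility with $d$ and the exactness argument are routine.
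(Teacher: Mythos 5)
Your proof is correct, and it is worth noting that the paper gives no argument at all for this lemma (its proof is the single word \emph{imm\'ediate}), so there is no competing approach to compare against: what you wrote is precisely the routine verification being alluded to — identify the two cochain complexes termwise, check the differentials agree because both deformations are constant, and pass to cohomology using exactness of $V \mapsto V[[\bar h]]$. One caveat: your ``conceptual'' justification of the first step is mildly circular. A $\Bbbk[[\bar h]]$-multilinear map on $(A[[\bar h]])^n$ corresponds a priori to a $\Bbbk[[\bar h]]$-linear map out of the \emph{ordinary} tensor power over $\Bbbk[[\bar h]]$, not the completed one, so asserting that $n$-cochains are maps out of $(A^{\otimes n})[[\bar h]]$ already presupposes the automatic-continuity statement you are trying to secure. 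Two non-circular repairs are available: (i) induct on $n$, writing $C^n_{\Bbbk[[\bar h]]}\big(A[[\bar h]],M[[\bar h]]\big) = \text{Hom}_{\Bbbk[[\bar h]]}\big(A[[\bar h]],\, C^{n-1}_{\Bbbk[[\bar h]]}(A[[\bar h]],M[[\bar h]])\big)$, rewriting the target via the inductive hypothesis and then applying point 1) of Proposition \ref{prop_props}; or (ii) prove injectivity of restriction-to-constant-arguments directly: if a $\Bbbk[[\bar h]]$-linear map $f : A[[\bar h]] \to M[[\bar h]]$ vanishes on $A$, then every $x \in A[[\bar h]]$ differs, for each $N$, from a finite $\Bbbk[[\bar h]]$-combination of elements of $A$ by an element of $(\bar h)^N A[[\bar h]]$, whence $f(x) \in \bigcap_N (\bar h)^N M[[\bar h]] = 0$ by separatedness; the multilinear case follows by fixing arguments one at a time. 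With either repair, your intertwining check and the exactness argument (products are exact in module categories, so $(-)[[\bar h]]$ commutes with cohomology) go through verbatim; this is moreover the same identification the paper itself asserts without proof for bilinear maps when defining deformations of algebras.
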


\begin{corollary} \label{cor_hoch}
Soient $A$ une $\Bbbk$-algèbre et $A[[\bar h]]$ une déformation formelle triviale de $A$. Il existe pour tout $n \in \NN$ un isomorphisme $\Bbbk[[\bar h]]$-linéaire
$$ \Big( H_{\Bbbk}^n(A,A) \Big) [[\bar h]] \ \simeq \ H_{\Bbbk[[h]]}^n \big( A[[\bar h]],A[[\bar h]] \big) \, . $$
\end{corollary}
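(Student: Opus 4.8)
Le plan est de ramener le cas d'une d\'eformation triviale au cas de la d\'eformation constante, d\'ej\`a trait\'e par le lemme \ref{lem_hoch}, via l'isomorphisme d'alg\`ebres fourni par la trivialit\'e. Par d\'efinition d'une d\'eformation formelle triviale, il existe en effet un isomorphisme de $\Bbbk[[\bar h]]$-alg\`ebre $\phi : A[[\bar h]] \to A[[\bar h]]$ reliant la d\'eformation donn\'ee (de produit $\mu_{\bar h}$) \`a la d\'eformation formelle constante (de produit $\mu$ \'etendu constamment), et induisant l'identit\'e modulo $(\bar h)$.

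J'utiliserais ensuite la fonctorialit\'e de la cohomologie de Hochschild vis-\`a-vis des isomorphismes d'alg\`ebre. Si $\phi : B \to B'$ est un isomorphisme de $\Bbbk[[\bar h]]$-alg\`ebre, alors, en consid\'erant $B$ et $B'$ comme bimodules sur eux-m\^emes, l'application de conjugaison
$$ f \ \longmapsto \ \phi \circ f \circ (\phi^{-1}, \dots, \phi^{-1}) $$
envoie $C^n_{\Bbbk[[\bar h]]}(B,B)$ sur $C^n_{\Bbbk[[\bar h]]}(B',B')$, commute aux diff\'erentielles de Hochschild, et induit donc un isomorphisme $\Bbbk[[\bar h]]$-lin\'eaire $H^n_{\Bbbk[[\bar h]]}(B,B) \simeq H^n_{\Bbbk[[\bar h]]}(B',B')$ pour tout $n \in \NN$. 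Le seul point \`a v\'erifier est que $\phi$ respecte simultan\'ement la structure d'alg\`ebre et celle de bimodule ; mais l'action bimodule d'une alg\`ebre sur elle-m\^eme \'etant sa multiplication, l'identit\'e $\phi(a \cdot m \cdot b) = \phi(a) \cdot \phi(m) \cdot \phi(b)$ n'est autre que le fait que $\phi$ soit un morphisme d'alg\`ebre.

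En appliquant ceci \`a l'isomorphisme $\phi$ ci-dessus, j'obtiendrais un isomorphisme $\Bbbk[[\bar h]]$-lin\'eaire entre $H^n_{\Bbbk[[\bar h]]} \big( A[[\bar h]], A[[\bar h]] \big)$ et la cohomologie de Hochschild de la d\'eformation formelle constante ; or le lemme \ref{lem_hoch}, appliqu\'e au bimodule $M = A$, identifie canoniquement cette derni\`ere \`a $\big( H^n_{\Bbbk}(A,A) \big)[[\bar h]]$. La composition de ces deux isomorphismes fournit l'\'enonc\'e. Aucun calcul explicite de cohomologie n'intervient : tout repose sur la fonctorialit\'e et sur le lemme \ref{lem_hoch}. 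La seule subtilit\'e, mineure ici, sera de contr\^oler le transport de la structure de bimodule sous $\phi$ et la $\Bbbk[[\bar h]]$-lin\'earit\'e de l'isomorphisme final, cette derni\`ere d\'ecoulant imm\'ediatement de celle de $\phi$.
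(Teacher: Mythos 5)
Votre preuve est correcte et suit essentiellement le même chemin que celle de l'article : celui-ci se contente d'observer que l'isomorphisme d'algèbre entre la déformation triviale et la déformation constante induit un isomorphisme de leurs cohomologies de Hochschild, puis conclut par le lemme \ref{lem_hoch}. Vous ne faites qu'expliciter l'argument de fonctorialité (conjugaison des cochaînes par $\phi$) que l'article laisse implicite.
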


\begin{proof}
$A[[\bar h]]$ étant isomorphe en tant que $\Bbbk[[\bar h]]$-algèbre à la déformation formelle constante de $A$, leurs cohomologies de Hochschild sont isomorphes. On conclut grâce au lemme \ref{lem_hoch}.
\end{proof}

\begin{corollary} \label{cor_hoch2}
Soient $A$ une $\Bbbk$-algèbre et $M[[\bar h]]$ une déformation formelle triviale d'un $A$-bimodule $M$. Il existe pour tout $n \in \NN$ un isomorphisme $\Bbbk[[\bar h]]$-linéaire
$$ \Big( H_{\Bbbk}^n(A,M) \Big) [[\bar h]] \ \simeq \ H_{\Bbbk[[h]]}^n \big( A[[\bar h]],M[[\bar h]] \big) \, , $$
où $A[[\bar h]]$ désigne la déformation formelle constante de $A$.
\end{corollary}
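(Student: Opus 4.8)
Le plan est de reproduire presque mot pour mot la d�monstration du corollaire \ref{cor_hoch}, en �changeant simplement les r�les jou�s par la d�formation de l'alg�bre et par celle du bimodule. Ici l'alg�bre $A[[\bar h]]$ est fix�e comme �tant la d�formation formelle constante de $A$, et c'est le bimodule $M[[\bar h]]$ qui porte a priori une d�formation non triviale. L'id�e est de ramener ce bimodule � sa d�formation constante, puis d'invoquer le lemme \ref{lem_hoch} d�j� �tabli pour le cas constant.

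Je commencerais par exploiter l'hypoth�se de trivialit�. Par d�finition (celle de d�formation triviale, adapt�e aux bimodules), le fait que $M[[\bar h]]$ soit une d�formation formelle triviale du $A$-bimodule $M$ signifie qu'elle est �quivalente, donc en particulier isomorphe en tant que bimodule sur l'alg�bre $A[[\bar h]]$ constante, � la d�formation formelle constante de $M$, que je noterai $M^{\text{const}}[[\bar h]]$. C'est l'unique endroit o� l'hypoth�se de trivialit� intervient, et c'est le point qu'il faut formuler avec soin.

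Ensuite, l'alg�bre $A[[\bar h]]$ �tant fix�e, j'appliquerais la fonctorialit� de la cohomologie de Hochschild en l'argument bimodule. La remarque \ref{rem_hoch}, lue avec $\Bbbk$ remplac� par $\Bbbk[[\bar h]]$ et $A$ par $A[[\bar h]]$, dit que $H^\bullet_{\Bbbk[[h]]}(A[[\bar h]], \cdot)$ est un foncteur sur la cat�gorie des $A[[\bar h]]$-bimodules ; deux bimodules isomorphes ont donc des cohomologies de Hochschild isomorphes, d'o� un isomorphisme $\Bbbk[[\bar h]]$-lin�aire
$$ H^n_{\Bbbk[[h]]}\big( A[[\bar h]], M[[\bar h]] \big) \ \simeq \ H^n_{\Bbbk[[h]]}\big( A[[\bar h]], M^{\text{const}}[[\bar h]] \big) \, . $$
Comme $M^{\text{const}}[[\bar h]]$ est la d�formation constante de $M$ sur l'alg�bre constante $A[[\bar h]]$, le lemme \ref{lem_hoch} fournit directement l'isomorphisme $\Bbbk[[\bar h]]$-lin�aire canonique
$$ H^n_{\Bbbk[[h]]}\big( A[[\bar h]], M^{\text{const}}[[\bar h]] \big) \ \simeq \ \big( H^n_{\Bbbk}(A,M) \big)[[\bar h]] \, , $$
et il ne reste plus qu'� composer les deux isomorphismes pour conclure.

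La seule chose � v�rifier r�ellement est que l'isomorphisme de bimodules issu de la trivialit� porte bien sur l'alg�bre $A[[\bar h]]$ constante : c'est exactement le contenu de la d�finition de d�formation triviale d'un bimodule, donc cet obstacle est de nature purement d�finitionnelle. Une fois ce point acquis, la fonctorialit� de la deuxi�me �tape est imm�diate et aucun calcul cohomologique n'est requis ; la principale subtilit� reste donc de bien distinguer la d�formation constante de l'alg�bre de la d�formation (triviale mais non constante) du bimodule.
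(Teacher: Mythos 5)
Votre preuve est correcte et suit exactement la m�me strat�gie que celle du papier, qui d�duit le corollaire \ref{cor_hoch2} du lemme \ref{lem_hoch} combin� � la fonctorialit� de $H^{\bullet}_{\Bbbk[[h]]}(A[[\bar h]], \cdot)$ en l'argument bimodule (remarque \ref{rem_hoch}). Vous avez simplement explicit� les deux �tapes que le papier laisse implicites : remplacer $M[[\bar h]]$ par la d�formation constante de $M$ gr�ce � la trivialit�, puis appliquer le lemme au cas constant.
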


\begin{proof}
Conséquence du lemme \ref{lem_hoch} et de la remarque \ref{rem_hoch}.
\end{proof}

L'introduction du langage cohomologique s'avère intéressante comme en témoigne le théorème suivant, généralisation de \cite[2.5.3]{guichardet}. On vérifie que la démonstration dans \cite{guichardet} fonctionne encore quand on travaille sur un anneau commutatif $\Bbbk$ plutôt que sur $\CC$, puis on donne les arguments qui permettent de passer du cas d'un paramètre de déformation au cas de plusieurs paramètres.

\begin{theorem} \label{thm_hoch}
Soit $A$ une $\Bbbk$-algèbre et $s \geq 1$ un entier. Si $H_{\Bbbk}^2(A,A) = 0$ toute déformation formelle à $s$ paramètres de $A$ est triviale.
\end{theorem}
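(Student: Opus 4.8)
The plan is to follow the classical single-parameter argument of \cite[2.5.3]{guichardet}, reorganized as an induction on the total degree $|\bar n| = n_1 + \cdots + n_s$, and to show at each stage that the obstruction to trivializing the next graded piece is a Hochschild $2$-cocycle, hence a coboundary by the hypothesis $H^2_{\Bbbk}(A,A) = 0$. Concretely, I would fix a formal deformation with multiplication $\mu_{\bar h}$, encoded by its family $(\mu_{\bar n})_{\bar n \in \NN^s}$ with $\mu_0 = \mu$, and prove by induction on $N \geq 0$ the assertion: there is an equivalence carrying $\mu_{\bar h}$ to a product $\mu'_{\bar h}$ with $\mu'_{\bar n} = 0$ for all $\bar n$ such that $1 \leq |\bar n| \leq N$. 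The base case $N = 0$ is vacuous.

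For the inductive step, assume $\mu_{\bar n} = 0$ for $1 \leq |\bar n| \leq N$ and read off the associativity relation \eqref{eq_assoc} at a multi-index $\bar n$ with $|\bar n| = N+1$. The key observation is that, under the inductive hypothesis, every term of \eqref{eq_assoc} with both $\bar p, \bar q \succ 0$ vanishes, since then $1 \leq |\bar p|, |\bar q| \leq N$; thus only the terms with $\bar p = 0$ or $\bar q = 0$ survive, and the relation collapses to $d^2 \mu_{\bar n} = 0$. In other words, each $\mu_{\bar n}$ with $|\bar n| = N+1$ is a Hochschild $2$-cocycle of $A$ with coefficients in $A$. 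Because $H^2_{\Bbbk}(A,A) = 0$, it is a coboundary, so I may choose $\phi_{\bar n} \in \text{Hom}_{\Bbbk}(A,A) = C^1_{\Bbbk}(A,A)$ with $d^1 \phi_{\bar n} = -\mu_{\bar n}$. Setting $\varphi := \text{id} + \sum_{|\bar n| = N+1} \phi_{\bar n}\, h^{\bar n}$, a $\Bbbk[[\bar h]]$-linear automorphism of $A[[\bar h]]$ inducing the identity modulo $(\bar h)$, and transporting $\mu_{\bar h}$ through $\varphi$, a direct computation gives the new graded piece $\mu'_{\bar n} = \mu_{\bar n} + d^1 \phi_{\bar n} = 0$ in degree $N+1$; since $\varphi$ differs from the identity only in degrees $\geq N+1$, the already-vanishing pieces in degrees $1, \dots, N$ are untouched. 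This completes the induction.

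Finally I would pass to the limit. Composing the successive equivalences $\varphi$ produced at each degree yields a sequence of partial composites that stabilizes modulo $(\bar h)^M$ after finitely many steps, because each new factor differs from the identity only in degrees $\geq N+1$; hence by completeness and separatedness of $A[[\bar h]]$ for the $\bar h$-adic topology the partial composites converge to a well-defined $\Bbbk[[\bar h]]$-linear automorphism $\Phi$ inducing the identity modulo $(\bar h)$. By continuity of the transport operation in the $\bar h$-adic topology, $\Phi$ carries $\mu_{\bar h}$ to the constant product $\mu_0 = \mu$, so the deformation is trivial.

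The only genuinely new content relative to the one-parameter case is the bookkeeping: one replaces the single integer degree by the total degree on $\NN^s$ and uses that each graded component $\{\bar n : |\bar n| = N+1\}$ is finite, so that the finitely many cocycle conditions at a given total degree are annihilated simultaneously by the single hypothesis $H^2_{\Bbbk}(A,A) = 0$. The main obstacle is checking cleanly that the correction $\varphi$ introduces no terms in degrees below $N+1$ at any step, and that the resulting infinite composition of equivalences converges; both are controlled precisely by the $\bar h$-adic completeness established earlier for $A[[\bar h]]$.
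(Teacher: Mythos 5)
Your proof is correct, but it follows a genuinely different route from the paper's. You generalize the one-parameter argument directly: an induction on the total degree $|\bar n|$, in which the associativity relation \eqref{eq_assoc} collapses (thanks to the inductive vanishing in degrees $1,\dots,N$) to the statement that every $\mu_{\bar n}$ with $|\bar n|=N+1$ is a Hochschild $2$-cocycle, and a single automorphism $\mathrm{id}+\sum_{|\bar n|=N+1}\phi_{\bar n}h^{\bar n}$ kills the whole graded slice at once; the cross-terms between the $\phi_{\bar n}$ only affect degrees $\geq 2(N+1)$, and $\bar h$-adic completeness gives convergence of the infinite composite, exactly as in the classical case. The paper instead inducts on the \emph{number of parameters} $s$: it writes $\Bbbk[[h_1,\dots,h_{s+1}]]\simeq\bigl(\Bbbk[[h_1,\dots,h_s]]\bigr)[[h_{s+1}]]$, views the $(s+1)$-parameter deformation as a one-parameter deformation of an $s$-parameter deformation, trivializes the inner one by the induction hypothesis, and then invokes Corollary \ref{cor_hoch} to transfer the hypothesis $H^2_{\Bbbk}(A,A)=0$ into $H^2_{\Bbbk[[h_1,\dots,h_s]]}\bigl(A[[h_1,\dots,h_s]],A[[h_1,\dots,h_s]]\bigr)=0$, so that the one-parameter theorem of \cite{guichardet} --- which must therefore be checked over an arbitrary commutative ring $\Bbbk$, not just over $\CC$ --- finishes the step. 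What the paper's route buys is reuse: the one-parameter theorem is a black box, and the base-change machinery (Lemma \ref{lem_hoch}, Corollary \ref{cor_hoch}) is shared with the proof of Theorem \ref{thm_hochrep}. What your route buys is self-containment: you never need Hochschild cohomology over the deformed base ring, only $H^2_{\Bbbk}(A,A)=0$ itself, at the modest price of redoing the cocycle bookkeeping with multi-indices and using the finiteness of each graded component $\{\bar n : |\bar n|=N+1\}$.
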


\begin{proof}
Nous allons procéder par récurrence sur $s$. Le cas $s = 1$ est l'énoncé de	\cite[2.5.3]{guichardet}, la seule différence étant que l'on travaille ici sur un anneau commutatif $\Bbbk$ (ce qui sera nécessaire pour la récurrence) et non sur $\CC$. On commence par montrer, sans supposer $H_{\Bbbk}^2(A,A) = 0$, que si $\mu_1 = \mu_2 = \cdots = \mu_{n-1} = 0$ (avec $n \in \NN_{\geq 1}$), et si $\mu_n$, qui est un $2$-cocycle en vertu de \eqref{eq_assoc}, est un cobord, alors il existe $\mu_h'$ équivalent à $\mu_h$ et vérifiant $\mu'_1 = \mu'_2 = \cdots = \mu'_n = 0$. \\
Il existe en effet par hypothèse $f \in \text{End}_{\Bbbk} A$ tel que
$$ \mu_n (a,b) \ = \ a \: \! f(b) \ - \ f(ab) \ + \ f(a) \: \! b \, . $$
On vérifie que l'automorphisme $\Bbbk[[h]]$-linéaire $\text{id}_A - h^n f$ conjugue $\mu_h$ avec une déformation ${\mu}'_h$ telle que
$$ \mu_h' (a,b) - ab \ \equiv \ 0 \quad \text{mod } h^{n+1} \, . $$
En supposant à présent $H_{\Bbbk}^2(A,A) = 0$ et $\mu_h$ quelconque, d'après ce qui précède on voit qu'il existe pour tout $p \in \NN_{\geq 1}$ un automorphisme $f_p$, tels que que le produit
\begin{equation*}
v_p \ := \ (\text{id}_A - h^p f_p) \circ \cdots \circ (\text{id}_A - h^2 f_2) \circ (\text{id}_A - h f_1)
\end{equation*}
conjugue $\mu_h$ avec une déformation $\mu_h^{(p)}$ telle que $\mu^{(p)}_1 = \mu^{(p)}_2 = \cdots = \mu^{(p)}_p = 0$. On conclut en remarquant que $v_p$ converge lorsque $p$ tend vers l'infini. \\

Supposons le théorème démontré pour $s \geq 1$ et considérons une déformation formelle à $s+1$ paramètres $A[[h_1, h_2 \dots, h_{s+1}]]$ de $A$. On note $\mu_{\bar h}$ le produit de cette déformation. \\
En considérant l'isomorphisme d'anneau canonique
$$ \Bbbk[[h_1, h_2 \dots, h_{s+1}]] \ \simeq \ \Big( \Bbbk[[h_1, h_2 \dots, h_s]] \Big) [[h_{s+1}]] $$
et l'isomorphisme linéaire canonique
$$ A[[h_1, h_2 \dots, h_{s+1}]] \ \simeq \ \Big( A[[h_1, h_2 \dots, h_s]] \Big) [[h_{s+1}]] $$
on voit que le produit $\mu_{\bar h}$ de $A[[h_1, h_2 \dots, h_{s+1}]]$ induit
\begin{itemize}
\item[\textbullet] sur $A[[h_1, h_2 \dots, h_s]]$ une structure de $\Bbbk[[h_1,h_2 \dots, h_s]]$-algèbre qui est une déformation formelle à $s$ paramètres de $A$;
\item[\textbullet] sur $\Big( A[[h_1, h_2 \dots, h_s]] \Big) [[h_{s+1}]]$ une structure de déformation à un paramètre de la $\Bbbk[[h_1, h_2, \dots, h_s]]$-algèbre $A[[h_1, h_2 \dots, h_s]]$ définie ci-dessus.
\end{itemize}
L'hypothèse de récurrence et le corollaire \ref{cor_hoch} permettent alors de conclure.
\end{proof}

Le théorème suivant généralise celui présenté dans \cite[III, 2.5.6]{guichardet}. De la même façon que pour le théorème \ref{thm_hoch}, la démonstration est une généralisation naturelle dans le cas $s=1$ et nécessite un argument supplémentaire pour passer au cas de plusieurs paramètres de déformation.

\begin{theorem} \label{thm_hochrep}
Soient $A$ une $\Bbbk$-algèbre et $(V, \pi_{\bar h})$ une représentation de la déformation formelle constante $A[[\bar h]]$ de $A$. Si $H_{\Bbbk}^1(A,\text{End}_{\Bbbk} V) = 0$, alors $(V, \pi_{\bar h})$ est une déformation formelle triviale de $\pi_0$.
\end{theorem}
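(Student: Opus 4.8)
The plan is to follow closely the proof of Theorem~\ref{thm_hoch}, replacing the deformation of the product by the deformation of the action and the group $H^2_{\Bbbk}(A,A)$ by $H^1_{\Bbbk}(A,\text{End}_{\Bbbk} V)$. I would argue by induction on the number $s$ of deformation parameters, the case $s=1$ being the cohomological heart of the matter (it is essentially \cite[III, 2.5.6]{guichardet}), which I would re-derive over an arbitrary commutative base ring $\Bbbk$, since this generality is exactly what makes the induction run.

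For $s=1$, writing $\pi_{\bar h}$ through its components $(\pi_n)_{n \in \NN}$, condition \eqref{eq_repr} for the constant deformation reads $\pi_n(ab) = \sum_{p+q=n} \pi_p(a)\circ\pi_q(b)$. Suppose inductively that $\pi_1 = \cdots = \pi_{n-1} = 0$; then the relation at level $n$ collapses to $\pi_n(ab) = \pi_0(a)\circ\pi_n(b) + \pi_n(a)\circ\pi_0(b)$, which is exactly the assertion that $\pi_n$ is a $1$-cocycle of $A$ with values in the bimodule $\text{End}_{\Bbbk} V$ (the bimodule structure being induced by $\pi_0$). The hypothesis $H^1_{\Bbbk}(A,\text{End}_{\Bbbk} V)=0$ then yields $x \in \text{End}_{\Bbbk} V$ with $\pi_n = d^0 x$, i.e. $\pi_n(a) = \pi_0(a)\circ x - x \circ \pi_0(a)$. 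I would check that the $\Bbbk[[h]]$-linear automorphism $\text{id}_V + h^n x$ of $V[[h]]$ conjugates $\pi_{\bar h}$ into an equivalent representation whose components vanish up to order $n$, leaving $\pi_0$ unchanged. Iterating produces automorphisms $\text{id}_V + h^{n_k} x_k$ with $n_1 < n_2 < \cdots$, whose left-to-right infinite product converges $h$-adically; the limit is an equivalence, with $u_0 = \text{id}_V$, between $\pi_{\bar h}$ and the constant deformation of $\pi_0$.

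For the passage from $s$ to $s+1$, I would use the canonical identifications $\Bbbk[[h_1,\dots,h_{s+1}]] \simeq \Bbbk'[[h_{s+1}]]$ and $V[[h_1,\dots,h_{s+1}]] \simeq V'[[h_{s+1}]]$, where $\Bbbk' := \Bbbk[[h_1,\dots,h_s]]$ and $V' := V[[h_1,\dots,h_s]]$. The subfamily of those $\pi_{\bar n}$ with $n_{s+1}=0$ defines, by \eqref{eq_repr} (a relation at a multi-index with last entry $0$ only involves such indices), a representation $\sigma$ of the constant $s$-parameter deformation of $A$ on $V'$; it is precisely the classical limit $h_{s+1}\to 0$ of $\pi_{\bar h}$. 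By the induction hypothesis $\sigma$ is a trivial deformation of $(V,\pi_0)$. Lemma~\ref{lem_defor_end} then shows that $\text{End}_{\Bbbk'}(V')$, equipped with the bimodule structure induced by $\sigma$, is a trivial deformation of the $A$-bimodule $\text{End}_{\Bbbk} V$; hence by Corollary~\ref{cor_hoch2}, $H^1_{\Bbbk'}\big(A[[h_1,\dots,h_s]],\text{End}_{\Bbbk'}(V')\big) \simeq \big(H^1_{\Bbbk}(A,\text{End}_{\Bbbk} V)\big)[[h_1,\dots,h_s]] = 0$. Viewing $\pi_{\bar h}$ as a one-parameter (in $h_{s+1}$) deformation of $\sigma$ over the base ring $\Bbbk'$, the case $s=1$ now applies and trivializes the $h_{s+1}$-direction; composing this equivalence with the one provided by the induction hypothesis exhibits $\pi_{\bar h}$ as a trivial deformation of $\pi_0$.

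The main obstacle is the reduction from several parameters to one: after splitting off $h_{s+1}$ one must ensure that the relevant first Hochschild group over the enlarged base ring $\Bbbk'$ still vanishes. This is not automatic, because $\text{End}_{\Bbbk'}(V')$ carries the a priori nonconstant bimodule structure coming from $\sigma$; the point is that the induction hypothesis makes $\sigma$ trivial, so Lemma~\ref{lem_defor_end} and Corollary~\ref{cor_hoch2} convert this triviality into the needed vanishing of $H^1$ over $\Bbbk'$. Once this base change of cohomology is in place, the two-step argument closes exactly as in Theorem~\ref{thm_hoch}, and the remaining verifications (that the conjugation kills the lowest nonzero component and that the infinite product converges) are routine.
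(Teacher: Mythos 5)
Your proof is correct and follows essentially the same route as the paper: the case $s=1$ is the Gerstenhaber-type iteration (the lowest nonvanishing component $\pi_n$ is a $1$-cocycle with values in $\text{End}_{\Bbbk} V$, killed by conjugating with $\text{id}_V + h^n x$, the infinite product of these automorphisms converging $h$-adically), carried out over an arbitrary commutative ring $\Bbbk$, and the passage from $s$ to $s+1$ splits off the last parameter and uses precisely Lemma~\ref{lem_defor_end} together with Corollary~\ref{cor_hoch2} to transfer the vanishing of $H^1$ to the base ring $\Bbbk[[h_1,\dots,h_s]]$ before applying the one-parameter case. Your write-up merely makes explicit the verifications that the paper's terse proof leaves to the reader.
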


\begin{proof}
Le cas $s=1$ est l'énoncé du théorème dans \cite{guichardet}. Encore une fois, on vérifie que la démonstration se généralise à un anneau commutatif $\Bbbk$ : en vertu de \eqref{eq_repr} et en supposant $H_{\Bbbk}^1(A,\text{End}_{\Bbbk} V) = 0$, il existe $f_1 \in \text{End}_{\Bbbk} V$ tel que $\pi_1 = d (f_1)$. On conjugue alors $\pi_{\bar h}$ par l'endomorphisme $\text{id}_V + h f_1$. On continue le procédé de la même manière que dans la preuve du théorème \ref{thm_hoch}. \\
Le lemme \ref{lem_defor_end} et le corollaire \ref{cor_hoch2} permettent de faire une induction sur $s$, de la même façon que dans la démonstration du théorème \ref{thm_hoch}.
\end{proof}

\section{Définition des Groupes Quantiques d'Interpolation de Langlands de Rang 1} \label{section_def}
Dans cette section on définit, dans l'esprit de \cite{hernandez}, les groupes quantiques d'interpolation de Langlands de rang 1. Si le principe d'interpolation qui motive la définition des groupes quantiques d'interpolation dans \cite{hernandez} est repris ici, les formules qui l'expriment ne sont toutefois pas les mêmes. Plus précisément, la différence entre les définitions données ici et celles présentes dans \cite{hernandez} ne se situe pas seulement au niveau du langage utilisé. Les formules que l'on découvrira ci-dessous sont de prime abord moins intuitives, mais permettent toutefois de donner une présentation plus concise et peut-être moins compliquée. Notons que l'intérêt n'est pas seulement esthétique puisque cette nouvelle définition permettra, dans les sections suivantes, de démontrer plusieurs propriétés nouvelles des groupes quantiques d'interpolation et de leurs représentations. \\

On sait classifier et décrire explicitement toutes les représentations irréductibles sur $\CC$ de dimension finie $L(n)$ ($n \in \NN$) de $\mathfrak{sl}_2$ : voir par exemple \cite[II.7]{humphreys}. Il existe une base de $L(n)$ dont on note les vecteurs $v_0, v_1, \dots, v_n$, pour laquelle l'action de $\mathfrak{sl}_2$ est donnée par les formules ($0 \leq i \leq n$):
$$ H. v_i \ = \ (n- 2i) \: \! v_i \, , \quad X^+.v_i \ = \ (n-i+1) \: \! v_{i-1} \, , \quad X^-.v_i \ = \ (i+1) \: \! v_{i+1} \, , $$
(on pose $v_{-1} := v_{n+1} := 0$).

$L(n)$ se déforme en une représentation $L^h(n)$ de $U_h(\mathfrak{sl}_2)$ (en tant que $\CC[[h]]$-module $L^h(n) = L(n) [[h]]$) :
$$ H. v_i \ = \ (n- 2i) v_i \, , \quad X^+.v_i \ = \ [n-i+1]_Q \, v_{i-1} \, , \quad X^-.v_i \ = \ [i+1]_Q \, v_{i+1} \, , $$
où $[a]_Q := \frac{Q^a - Q^{-a}}{Q - Q^{-1}}$ pour $a \in \ZZ$ et $Q := \exp h \in \CC[[h]]$ (la série formelle $[a]_Q$ est souvent appelée nombre quantique). \\

Le point de départ dans la définition des groupes quantiques d'interpolation de $\mathfrak{sl}_2$ est de déformer une nouvelle fois, par rapport à un paramètre $h'$, l'action de $H$ et $X^{\pm}$ sur $L^h(n)$, en une action sur le $\CC[[h,h']]$-module $L(n) [[h,h']]$. Contrairement à la précédente déformation par rapport à $h$, ces déformations dépendent d'un paramètre $g \in \NN_{\geq 1}$, de telle sorte que l'on a non pas une déformation mais une famille de déformations. Pour $g$ fixé, les actions déformées des générateurs $H$ et $X^{\pm}$ vont vérifier de nouvelles relations (indépendantes de $n$), déformations des relations de $U_h(\mathfrak{sl}_2)$. C'est ces nouvelles relations qui seront alors posées dans la définition du groupe quantique d'interpolation $U_{h,h'}(\mathfrak{sl}_2,g)$. On notera $L^{h,h'}(n,g)$ l'espace $L(n) [[h,h']]$ considéré comme représentation de l'algèbre $U_{h,h'}(\mathfrak{sl}_2,g)$. \\

On peut remarquer la chose suivante : lors de la quantification de $L(n)$, on a en quelque sorte remplacé les nombres $a$ par les nombres quantiques $[a]_Q$. De manière similaire, lors de la déformation par rapport à $h'$, on va remplacer les nombres quantiques $[a]_Q$ par de nouveaux nombres quantiques $[a]_{QT^{\{a\}}}$ :
\begin{equation}
[a]_{QT^{\{a\}}} \ := \ \frac{\left( Q \, T^{\{a\}} \right)^a \ - \ \left( Q \, T^{\{a\}} \right)^{-a}}{Q \, T^{\{a\}} - Q^{-1} \, T^{-\{a\}}} \, ,
\end{equation}
où $T := \exp h' \in \CC[[h']]$ et $\{a\}$ est un polynôme de Laurent en $Q^a$ que l'on décrira un peu plus loin. \\

De manière plus précise, l'action déformée par rapport à $h$ et $h'$ est décrite par les formules suivantes ($0 \leq i \leq n$):
$$ H. v_i \ = \ (n- 2i) v_i \, , $$
$$ X^+.v_i \ = \ \left( [n-i+1]_{QT^{\{n-i+1\}}} \right) v_{i-1} \, , \quad X^-.v_i \ = \ \left( [i+1]_{QT^{\{i+1\}}} \right) v_{i+1} \, . $$
En particulier, on voit que $X^{\pm} X^{\mp} . v_i = \pi_i^{\pm} \: \! v_i$ avec $\pi_i^{\pm}$ produit de deux nombres quantiques. Avant d'aller plus loin dans les explications, donnons la définition des groupes quantiques d'interpolation $U_{h,h'}(\mathfrak{sl}_2,g)$.

\begin{definition} \label{def_uhh}
Soit $g \geq 1$ un entier. \\
$U_{h,h'}(\mathfrak{sl}_2,g)$ est la $\CC[[h,h']]$-algèbre topologiquement engendrée par $X^+$, $X^-$, $H$, $C$ et définie par les relations
$$ [C, X^{\pm}] \ = \ [C, H] \ = \ 0 \, , \quad \quad [H, X^{\pm}] \ = \ \pm 2 \, X^{\pm} \, , $$

\begin{equation} \label{eq_defx}
X^\pm X^\mp \ = \ \xxp{\pm}{} \, ,
\end{equation}

avec
$$ Q \ := \ \text{exp}(h) \, , \quad  T \ := \ \text{exp} (h') \, , $$
$$ H_e^{\pm} \ := \ \frac{1}{2} \, \Big( \sqrt{C} + e \: \! H \mp e \Big) \, , \ \quad \quad {\{ H_e^{\pm} \}}_{\! \! \: Q} \ := \ P \! \left( Q^{H_e^{\pm}} \right) , \quad \quad e \in \{ -1,1 \} \, , $$
\begin{equation} \label{eq_poly}
P(u) \ := \ \frac{1}{2} \left( u^{g-1} + u^{1-g} \right) \ \prod_{k=1}^{g-1} \, \frac{\varepsilon^k \: \! u - \varepsilon^{-k} \: \! u^{-1}}{\varepsilon^k - \varepsilon^{-k}} \, .
\end{equation}
\end{definition}

Soit $P_g$ le polynôme interpolateur de Lagrange de degré $g-1$ vérifiant
$$ P_g(1) \ = \ 1 \, , \quad P_g(\varepsilon^2) \ = \ \cdots \ = \ P_g(\varepsilon^{2g-2}) \ = \ 0 \, , $$
où $\varepsilon := \exp (i\pi /g) \in \CC$ est une racine $(2g)$-ième primitive de l'unité. \\
Le polynôme de Laurent $P$ défini dans \eqref{eq_poly} peut s'obtenir à partir de $P_g$ par symétrisation (remarquons que $P$ est invariant par les deux transformations $u \mapsto -u$ et $u \mapsto u^{-1}$) :
$$ P(u) \ = \ \frac{1}{2} \Big( P_g(u^2) \ + \ P_g(u^{-2}) \Big) \, . $$
Par conséquent le polynôme $P$ vérifie
\begin{equation} \label{eq_valp}
P(\varepsilon^l) \ = \ 1 \quad \text{si $g$ divise $l$} \, , \quad \ P(\varepsilon^l) \ = \ 0 \quad \text{sinon} \, .
\end{equation}

L'élément $C$ dans $U_{h,h'}(\mathfrak{sl}_2,g)$ doit en quelque sorte être compris comme l'opérateur de multiplication par $(n+1)^2$ sur $L^{h,h'} (n,g)$.
Dans la relation \eqref{eq_defx}, on peut en quelque sorte reconnaître à droite de l'égalité le produit $\pi_i^{\pm}$. En effet l'opérateur $H_e^{\pm}$ sur $L^{h,h'} (n,g)$ est décrit par ($0 \leq i \leq n$) :
$$ H_1^+ . v_i \ = \ n-i \, , \quad H_1^- . v_i \ = \ n-i +1 \, , \quad H_{-1}^+ . v_i \ = \ i+1 \, , \quad H_{-1}^- . v_i \ = \ i \, . $$

La relation \eqref{eq_defx} contient pour ainsi dire la définition des polynômes $\{a\}$ ($a \in \ZZ$) :
\begin{equation}
\{a\} \ := \ P \! \left( Q^a \right) \ = \ \frac{1}{2} \left( Q^{a(g-1)} + Q^{a(1-g)} \right) \ \prod_{k=1}^{g-1} \, \frac{\varepsilon^k \: \! Q^a - \varepsilon^{-k} \: \! Q^{-a}}{\varepsilon^k - \varepsilon^{-k}} \, .
\end{equation}
Ces polynômes $\{a\}$ possèdent la propriété suivante : en spécialisant $Q$ à $\varepsilon$ on obtient d'après \eqref{eq_valp}
$$ \{a\} = 1 \quad \text{si $g$ divise $a$} \quad \text{ et } \quad \{a\}= 0 \quad \text{sinon}. $$

C'est cette propriété des polynômes $\{a \}$ qui permettra de démontrer que $L^{h,h'}(n,g)$ interpole la représentation $L^h(n)$ du groupe quantique $\Uh$ et la représentation Langlands $g$-duale ${}^L \! L^{h'}(n,g)$ de $\Urh{\! gh'}$ : voir la section \ref{section_langlands} et le théorème \ref{thm_reprinter}. \\

Il faut préciser deux points dans la définition de $\Uhh$.
\begin{itemize}
\item[\textbullet] On peut considérer
$$ \xxn{e}{\pm}{}{} \quad \text{ et } \quad \xxd{e}{\pm}{}{} $$
comme des séries formelles dans $\CC \left[H,\sqrt{C} \right] [[h,h']]$, où $H$ et $\sqrt C$ désignent ici les variables de l'algèbre de polynômes $\CC \left[H,\sqrt{C} \right]$. Les deux séries formelles sont divisibles par $h + h' {\{ H_e^{\pm} \}}_{\! Q}$. Le quotient
$$ \xxf{e}{\pm}{}{} $$
est alors défini dans $\CC \left[H,\sqrt{C} \right] [[h,h']]$ comme le quotient des deux séries formelles précédentes chacune divisée par $h + h' {\{ H_e^{\pm} \}}_{\! Q}$ (de telle sorte que la série au dénominateur devient alors inversible).
\item[\textbullet] En considérant le produit
$$ \Pi \ := \ \xxp{\pm}{} $$
comme un élément de l'algèbre $\CC \left[H,\sqrt{C} \right] [[h,h']]$, on vérifie que $\Pi$ est invariant par l'unique automorphisme de la $\CC[[h,h']]$-algèbre $\CC \left[H,\sqrt{C} \right] [[h,h']]$ qui envoie $H$ sur $H$ et $\sqrt C$ sur $- \sqrt C$ (car le polynôme $P$ défini dans \eqref{eq_poly} est invariant par la transformation $u \mapsto u^{-1}$). Cela prouve que $\Pi$ appartient en fait à la sous-algèbre $\CC \left[H,\left( \sqrt{C} \right)^2 \right] [[h,h']]$.
\end{itemize}

Dans la relation \eqref{eq_defx}, le terme à droite de l'égalité est l'image de $\Pi$ par le morphisme de $\CC[[h,h']]$-algèbre qui à $H$ associe $H$ et à $\left( \sqrt{C} \right)^2$ l'élément $C$. \\
$\sqrt C$ ne désigne donc pas un élément de l'algèbre $U_{h,h'}(\mathfrak{sl}_2,g)$. On utilisera souvent cette écriture dans la suite en omettant toutefois de faire les vérifications semblables à celles faites précédemment. 

\begin{rem} \label{rem_hernandezgen}
Dans \cite{hernandez}, les groupes quantiques d'interpolation nécessitent d'autres générateurs que $X^{\pm}$, $H$ et $C$ (on verra dans la section suivante que l'on peut en fait se passer de $C$ pour définir $\Uhh$). De plus, il est nécessaire de considérer des puissances de certains des générateurs par d'autres générateurs, ce à quoi on peut donner un sens quitte à rajouter au moins autant de nouveaux générateurs qu'il n'y a de telles puissances. Ces algèbres sont en quelque sorte beaucoup plus grosses que les algèbres $\Uhh$, donc moins maniables, et plusieurs des résultats qui suivront dans les sections suivantes n'ont pas d'analogues dans \cite{hernandez}.
\end{rem}

$U(\mathfrak{sl}_2)$ désignera l'algèbre enveloppante de $\mathfrak{sl}_2$ : c'est la $\CC$-algèbre engendrée par $\bar{X}^{\pm}$, $\bar{H}$, et définie par les relations
$$ [\bar{H}, \bar{X}^{\pm}] \ = \ \pm 2 \, \bar{X}^{\pm} \, , \quad [\bar{X}^+, \bar{X}^-] \ = \ \bar{H} \, . $$
Le centre de $U(\mathfrak{sl}_2)$ est l'algèbre des polynômes en l'élément de Casimir
\begin{equation} \label{eq_casimir}
\bar{C} \ := \ (\bar{H} + 1)^2 \ + \ 4 \, \bar{X}^- \bar{X}^+ \ = \ (\bar{H} - 1)^2 \ + \ 4 \, \bar{X}^+ \bar{X}^- \, ,
\end{equation}
voir par exemple \cite[p. 304]{knapp}. \\

Rappelons par ailleurs la définition du groupe quantique $U_{\: \! h}(\mathfrak{sl}_2)$ (qui est une déformation formelle triviale de $U(\mathfrak{sl}_2)$ : voir par exemple \cite[6.4]{chari}). \\
Il s'agit de la $\CC[[h]]$-algèbre topologiquement engendrée par les éléments $\tilde{X}^{\pm}$, $\tilde H$, et définie par les relations
$$ [\tilde{H}, \tilde{X}^{\pm}] \ = \ \pm 2 \, \tilde{X}^{\pm} \, , \quad [\tilde{X}^+, \tilde{X}^-] \ = \ \frac{\text{sinh} (h \: \! \tilde{H})}{\text{sinh}(h)} \, . $$

On notera par ailleurs $U_{r h}(\mathfrak{sl}_2)$ ($r \in \NN_{\geq 1}$) la $\CC[[h]]$-algèbre topologiquement engendrée par les éléments $\tilde{X}^{\pm}$, $\tilde{H}$, et définie par les relations
$$ [\tilde{H}, \tilde{X}^{\pm}] \ = \ \pm 2 \, \tilde{X}^{\pm} \, , \quad [\tilde{X}^+, \tilde{X}^-] \ = \ \frac{\text{sinh} (r h \: \! \tilde{H})}{\text{sinh}(r h)} \, . $$
En d'autres mots, $U_{r h}(\mathfrak{sl}_2)$ est simplement obtenue à partir de $U_{\: \! h}(\mathfrak{sl}_2)$ par renormalisation du paramètre $h$.

\begin{exemple} \label{ex_g1}
Dans la définition \ref{def_uhh}, considérons le cas où $g = 1$. On a $P = 1$ pour le polynôme d'interpolation définie dans \eqref{eq_poly}. Par suite $U_{h,h'}(\mathfrak{sl}_2,1)$ est la $\CC[[h,h']]$-algèbre topologiquement engendrée par $X^+$, $X^-$, $H$, $C$ et définie par les relations
$$ [C, X^{\pm}] \ = \ [C, H] \ = \ 0 \, , $$
$$ [H, X^{\pm}] \ = \ \pm 2 \, X^{\pm} \, , $$
$$ X^\pm X^\mp \ = \ \prod_{e = 1,-1} \frac{\left( Q \, T \right)^{H_e^{\pm}} - \ \left( Q \, T \right)^{-H_e^{\pm}}}{Q \, T \ - \ Q^{-1} \, T^{-1} } \, , $$
avec
$$ H_e^{\pm} \ := \ \frac{1}{2} \, \Big( \sqrt{C} + e \: \! H \mp e \Big) \, , \ \quad  \text{où } \ e \in \{ -1,1 \} \, . $$
La relation suivante est vérifiée dans $U_{h,h'}(\mathfrak{sl}_2,1)$ :
$$ [X^+, X^-] \ = \ \frac{\left( QT \right)^H - \left( QT \right)^{-H}}{QT - Q^{-1} T^{-1}} \, . $$
Par conséquent, $U_{h,h'}(\mathfrak{sl}_2,1)$ contient naturellement un quotient de la $\CC[[h'']]$-algèbre $U_{h''}(\mathfrak{sl}_2)$ (toute $\CC[[h,h']]$-algèbre, et en particulier $U_{h,h'}(\mathfrak{sl}_2,1)$, est naturellement munie d'une structure de $\CC[[h'']]$-algèbre où $h''$ agit par $h + h'$). D'après la proposition \ref{prop_basehh} énoncée un peu plus loin, on voit que $U_{h''}(\mathfrak{sl}_2)$ est en fait une sous-algèbre de $U_{h,h'}(\mathfrak{sl}_2,1)$. En notant $h^{(3)} = h - h'$, $U_{h,h'}(\mathfrak{sl}_2,1)$ apparaît comme la déformation constante de $U_{h''}(\mathfrak{sl}_2)$ selon $h^{(3)}$. \\
Cette situation est particulière à $g = 1$. Pour $g \geq 2$, $U_{h,h'}(\mathfrak{sl}_2,g)$ n'est plus une déformation formelle constante selon $h^{(3)}$ du groupe quantique usuel. En particulier, on remarque que la symétrie entre $h$ et $h'$, observée pour $g=1$, est brisée pour $g \geq 2$.
\end{exemple}

\begin{rem}
Les groupes quantiques $\Uhh$ pour $g = 1,2$ et $3$ correspondent aux groupes quantiques d'interpolation appelés élémentaires dans \cite{hernandez}, et respectivement notés $U_{q,t}(A_1)$, $U_{q,t}(B_1)$ et $U_{q,t}(G_1)$. \\
Ces groupes quantiques d'interpolation élémentaires permettent de donner une définition dans \cite{hernandez} d'un groupe quantique d'interpolation de Langlands associé à une algèbre de Lie simple de dimension finie. Dans cet article, la définition donnée est valable quelque soit $g$, ce qui permettra d'associer des groupes d'interpolation de Langlands à toute algèbre de Kac-Moody (symétrisable).
\end{rem}

\section{Propriétés des Groupes Quantiques d'Interpolation} \label{section_prop}
On étudie ici les groupes quantiques d'interpolation de Langlands $\Uhh$ définis précédemment. La section est divisée en cinq sous-sections. \\
Dans la première on montre que les groupes quantiques d'interpolation s'avèrent être des doubles déformations formelles triviales de $U(\mathfrak{sl}_2)$, et plus encore, des déformations triviales selon $h$ (ou $h'$) du groupe quantique usuel associé à $\mathfrak{sl}_2$ : voir le théorème \ref{thm_uhhdefor}. Il s'agit là d'un fait nouveau et d'un point clé de l'article. \\
Les deux sous-sections qui suivent donnent différentes propriétés des groupes quantiques d'interpolation, obtenues grâce aux structures de double déformation de ceux-ci. La plupart, comme expliqué dans la remarque \ref{rem_hernandezgen}, n'ont pas d'analogues dans \cite{hernandez}. \\
On prouve notamment l'existence d'une décomposition triangulaire de $\Uhh$ (voir la proposition \ref{prop_triang}), qui sera un outil efficace, par exemple pour l'étude des représentations. \\
Dans la quatrième sous-section, on donne quelques propriétés de symétrie de $\Uhh$. Enfin dans la dernière sous-section, on définit rigoureusement ce que signifie spécialiser $\Uhh$ en $Q = \varepsilon$, et on établit un lemme crucial pour notre étude : voir le lemme \ref{lem_fond} et la remarque \ref{rem_fond}.

\subsection{Doubles déformations}
La proposition un peu technique qui suit est fondamentale, elle contient en quelque sorte le noyau dur de la preuve que $\Uhh$ admet des structures de déformations de $U(\mathfrak{sl}_2)$ et du groupe quantique usuel : voir le théorème \ref{thm_uhhdefor}.

\begin{proposition} \label{prop_uconst}
Il existe un isomorphisme de $\CC[[h,h']]$-algèbre
$$ \psi \, : \ U_{h,h'}(\mathfrak{sl}_2,g) \ \stackrel{\sim}{\longrightarrow} \, \ \Uc[[h,h']] \, , $$
où $\Uc[[h,h']]$ désigne la déformation formelle constante de $\Uc$, et tel que
$$ \psi(H) \ = \ \bar{H} \, , \quad \psi(C) \ = \ \bar{C} \, , \quad \psi(X^-) \ = \ \bar{X}^- \,  , $$
\begin{equation} \label{eq_uconst}
\psi(X^+) \ = \ \left( \prod_{e = 1, -1} \ \frac{\Big( Q \, T^{{\{ \bar{H}_e^+ \}}_Q} \Big)^{\bar{H}_e^+} - \ \Big( Q \, T^{{\{ \bar{H}_e^+ \}}_Q} \Big)^{-\bar{H}_e^+}}{\bar{H}_e^+ \Big( Q \, T^{{\{ \bar{H}_e^+ \}}_Q} \ - \ Q^{-1} \, T^{-{\{ \bar{H}_e^+ \}}_Q} \Big)} \right) \bar{X}^+ \, ,
\end{equation}
\end{proposition}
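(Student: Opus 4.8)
The plan is to construct $\psi$ from the universal property of topologically presented algebras (point 4 of Proposition \ref{prop_props}): it suffices to prescribe the images of the generators $H,C,X^-,X^+$ in the complete separated algebra $\Uc[[h,h']]$ and to check that each defining relation of $\Uhh$ is killed. First I would record that the coefficient appearing in \eqref{eq_uconst} is a genuine element. By the divisibility and $\sqrt{C}$-symmetrization arguments already given after Definition \ref{def_uhh}, each factor equals $\frac{\sinh(\bar{H}_e^+ w_e)/(\bar{H}_e^+ w_e)}{\sinh(w_e)/w_e}$ with $w_e=h+h'\{\bar{H}_e^+\}_Q$, hence is a well-defined power series, and the whole product is invariant under $\sqrt{\bar C}\mapsto-\sqrt{\bar C}$, so it defines an element $f(\bar H,\bar C)\in\CC[\bar H,\bar C][[h,h']]$ with $f\equiv 1\bmod(h,h')$; in particular $f$ is invertible in the $\bar h$-adically complete ring.

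Next I would verify the relations. The relations $[C,X^{\pm}]=[C,H]=0$ hold because $\psi(C)=\bar C$ is the Casimir \eqref{eq_casimir}, hence central. The relations $[H,X^{\pm}]=\pm 2X^{\pm}$ hold because $[\bar H,\bar X^-]=-2\bar X^-$ and, as $f(\bar H,\bar C)$ commutes with $\bar H$, one gets $[\bar H,f(\bar H,\bar C)\bar X^+]=f(\bar H,\bar C)[\bar H,\bar X^+]=2\psi(X^+)$. The heart is the quadratic relation \eqref{eq_defx}. Using the two forms of \eqref{eq_casimir} one has $\bar X^+\bar X^-=\tfrac14(\bar C-(\bar H-1)^2)$ and $\bar X^-\bar X^+=\tfrac14(\bar C-(\bar H+1)^2)$, while a direct factorization gives $\bar{H}_1^{\pm}\bar{H}_{-1}^{\pm}=\tfrac14(\bar C-(\bar H\mp 1)^2)$. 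Hence $\psi(X^+)\psi(X^-)=f(\bar H,\bar C)\,\bar X^+\bar X^-=f(\bar H,\bar C)\prod_e\bar{H}_e^+$, which is exactly the $\psi$-image of the right-hand side of \eqref{eq_defx}, since that side equals $f\cdot\prod_e\bar{H}_e^+$ by the very definition of $f$. For $X^-X^+$ I would first move the coefficient past $\bar X^-$ via $\bar X^-f(\bar H,\bar C)=f(\bar H+2,\bar C)\bar X^-$; the substitution $\bar H\mapsto\bar H+2$ sends $\bar{H}_e^+$ to $\bar{H}_e^-$, so the identical computation closes that case.

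To obtain bijectivity I would exhibit the inverse $\phi:\Uc[[h,h']]\to\Uhh$, again by the universal property, setting $\phi(\bar H)=H$, $\phi(\bar X^-)=X^-$ and $\phi(\bar X^+)=f(H,C)^{-1}X^+$, where $f(H,C)$ is the same expression read inside $\Uhh$ (well defined for the same reasons). The commutation relations are immediate, and the only one requiring work is $[\bar X^+,\bar X^-]=\bar H$. Its image is $f(H,C)^{-1}X^+X^- - f(H+2,C)^{-1}X^-X^+$, and feeding in the defining relations \eqref{eq_defx}, which by the same factorization read $X^+X^-=f(H,C)\cdot\tfrac14(C-(H-1)^2)$ and $X^-X^+=f(H+2,C)\cdot\tfrac14(C-(H+1)^2)$, this telescopes to $\tfrac14((H+1)^2-(H-1)^2)=H$. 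Thus $\phi$ is a well-defined homomorphism.

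Finally $\psi\circ\phi$ and $\phi\circ\psi$ fix every generator: for instance $\psi(\phi(\bar X^+))=f(\bar H,\bar C)^{-1}\psi(X^+)=f(\bar H,\bar C)^{-1}f(\bar H,\bar C)\bar X^+=\bar X^+$, and symmetrically $\phi(\psi(X^+))=f(H,C)\,\phi(\bar X^+)=X^+$. By continuity and density of the subalgebra generated by the generators, both composites equal the respective identities, so $\psi$ is an isomorphism with inverse $\phi$. I expect the main obstacle to be the careful justification that $f$ and $f^{-1}$ are legitimate elements of $\CC[\bar H,\bar C][[h,h']]$ — the interplay of the $h+h'\{\bar{H}_e^{\pm}\}_Q$-divisibility with the $\sqrt{\bar C}$-symmetrization, together with the bookkeeping guaranteeing that $\bar H\mapsto\bar H+2$ exchanges the families $\{\bar{H}_e^+\}$ and $\{\bar{H}_e^-\}$ compatibly with $P$ from \eqref{eq_poly}. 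Once these are secured, the algebraic core reduces to the elementary Casimir factorization $\bar{H}_1^{\pm}\bar{H}_{-1}^{\pm}=\bar X^{\pm}\bar X^{\mp}$.
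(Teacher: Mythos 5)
Your proof is correct and follows essentially the same route as the paper's: you define $\psi$ on the generators via the universal property of topologically presented algebras, verify the relations through the Casimir factorizations $\bar{X}^{\pm}\bar{X}^{\mp} = \bar{H}_1^{\pm}\bar{H}_{-1}^{\pm} = \tfrac14\big(\bar{C}-(\bar{H}\mp 1)^2\big)$ together with the shift $\bar{X}^- p(\bar{H}) = p(\bar{H}+2)\,\bar{X}^-$ exchanging $\bar{H}_e^+$ and $\bar{H}_e^-$, and then exhibit the explicit inverse sending $\bar{X}^+$ to the reciprocal factor times $X^+$, exactly as the paper's $\psi_1$. The only cosmetic difference is your packaging of the coefficient as a single element $f\equiv 1 \bmod (h,h')$, hence invertible, where the paper argues invertibility of the divided $\sinh$-series directly; the substance is identical.
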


Avant de donner la démonstration de cette proposition, commençons par expliquer le terme à droite de l'égalité \eqref{eq_uconst}. \\
$\bar{H}_e^{\pm}$ est défini comme dans la définition \ref{def_uhh} :
$$ \bar{H}_e^{\pm} \ := \ \frac{1}{2} \, \Big( \sqrt{\bar{C}} + e \: \! \bar{H} \mp e \Big) \, , \ \quad  \ e \in \{ -1,1 \} \, . $$
Dans $\CC[H,\sqrt{\bar C}][[h,h']]$, la série formelle
\begin{equation*}
\Big( Q \, T^{{\{ \bar{H}_e^+ \}}_Q} \Big)^{\bar{H}_e^+} - \ \Big( Q \, T^{{\{ \bar{H}_e^+ \}}_Q} \Big)^{-\bar{H}_e^+} \ = \ 2 \: \! \sinh \! \Bigg[ \! \left( h + h' { \{ \bar{H}^+_e \} }_{\! Q} \right) \! \bar{H}^+_e \Bigg]
\end{equation*}
est divisible par $\left( h + h' { \{ \bar{H}^+_e \} }_{\! Q} \right) \! \bar{H}^+_e$. La série
$$ Q \, T^{{\{ \bar{H}_e^+ \}}_Q} \ - \ Q^{-1} \, T^{-{\{ \bar{H}_e^+ \}}_Q} \ = \ 2 \: \! \sinh \! \left( h + h' { \{ \bar{H}^+_e \} }_{\! Q} \right) $$
est quant à elle divisible par $h + h' { \{ \bar{H}^+_e \} }_{\! Q}$ dans $\CC[H,\sqrt{\bar C}][[h,h']]$. Le quotient
$$ \bar{F} \ := \ \left( \prod_{e = 1, -1} \ \frac{\Big( Q \, T^{{\{ \bar{H}_e^+ \}}_Q} \Big)^{\bar{H}_e^+} - \ \Big( Q \, T^{{\{ \bar{H}_e^+ \}}_Q} \Big)^{-\bar{H}_e^+}}{\bar{H}_e^+ \Big( Q \, T^{{\{ \bar{H}_e^+ \}}_Q} \ - \ Q^{-1} \, T^{-{\{ \bar{H}_e^+ \}}_Q} \Big)} \right) $$
a donc un sens dans $\CC[H,\sqrt{\bar C}][[h,h']]$ une fois les divisions faites. De la même manière que dans la définition \ref{def_uhh}, on vérifie que $\bar F$ appartient en fait à la sous-algèbre $\CC[H,(\sqrt{\bar C})^2][[h,h']]$. \\
Le terme à droite de l'égalité dans \eqref{eq_uconst} est défini comme l'image de $\bar F$ par l'unique morphisme de $\CC[[h,h']]$-algèbre qui envoie $\bar H$ sur $\bar H$ et $(\sqrt{\bar C})^2$ sur $\bar C$.

\begin{proof}
Pour prouver l'existence du morphisme $\psi$, il s'agit, d'après le point 4) de la proposition \ref{prop_props}, de voir que
$$ \Big( \psi(X^+), \psi(X^-), \psi(H), \psi(C) \Big) $$
vérifient les relations définissant $U_{ \: \! h,h'} (\mathfrak{sl}_2)$.

\begin{itemize}
\item[\textbullet] Puisque $\psi$ envoie $C$ sur l'élément central $\bar C$, les relations suivantes sont immédiatement vérifiées :
$$ \big[ \psi(C), \psi(X^{\pm}) \big] \ = \ \big[ \psi(C), \psi(H) \big] \ = \ 0 \, . $$

\item[\textbullet] $$ \big[ \psi(H), \psi(X^-) \big] \ = \ \big[ \bar{H}, \bar{X}^- \big] \ = \ - \: \! 2 \: \! \bar{X}^- \ = \ \ - \: \! 2 \: \! \psi(X^-) \, . $$

\item[\textbullet] L'élément
$$ \bar \Pi \ := \ \prod_{e = 1, -1} \ \frac{\Big( Q \, T^{{\{ \bar{H}_e^+ \}}_Q} \Big)^{\bar{H}_e^+} - \ \Big( Q \, T^{{\{ \bar{H}_e^+ \}}_Q} \Big)^{-\bar{H}_e^+}}{\bar{H}_e^+ \Big( Q \, T^{{\{ \bar{H}_e^+ \}}_Q} \ - \ Q^{-1} \, T^{-{\{ \bar{H}_e^+ \}}_Q} \Big)} $$
appartient à la sous-$\CC[[h,h']]$-algèbre topologiquement engendrée par $\bar H$ et $\bar C$, donc commute avec $\bar H$. Par suite,
$$ \big[ \psi(H), \psi(X^+) \big] \ = \ \big[ \bar{H}, \bar \Pi \: \! \bar{X}^+ \big] \ = \ \bar \Pi \big[ \bar{H},\bar{X}^+ \big] \ = \ 2 \: \! \bar \Pi \: \! \bar{X}^+ \ = \ 2 \: \! \psi(X^+) \, . $$

\item[\textbullet] Dans $U_h(\mathfrak{sl}_2)$, on a $\bar{X}^+ \bar{X}^- = \frac{1}{4} \bar{C} - \frac{1}{4} (\bar{H} -1)^2 = \bar{H}_1^+ \: \! \bar{H}_{-1}^+$, donc
\begin{eqnarray*} \psi(X^+) \: \! \psi(X^-) & =  & \prod_{e = 1, -1} \ \frac{\Big( Q \, T^{{\{ \bar{H}_e^+ \}}_Q} \Big)^{\bar{H}_e^+} - \ \Big( Q \, T^{{\{ \bar{H}_e^+ \}}_Q} \Big)^{-\bar{H}_e^+}}{Q \, T^{{\{ \bar{H}_e^+ \}}_Q} \ - \ Q^{-1} \, T^{-{\{ \bar{H}_e^+ \}}_Q} \Big)} \\
& = & \psi \left( \xxp{+}{} \right) \, .
\end{eqnarray*}

\item[\textbullet] Toujours dans $U_h(\mathfrak{sl}_2)$, on a $\bar{X}^- \bar{H}_e^+ \ = \ \bar{H}_e^- \bar{X}^-$, donc
\begin{equation*}
\bar{X}^- \left( \prod_{e = 1, -1} \ \frac{\Big( Q \, T^{{\{ \bar{H}_e^+ \}}_Q} \Big)^{\bar{H}_e^+} - \ \Big( Q \, T^{{\{ \bar{H}_e^+ \}}_Q} \Big)^{-\bar{H}_e^+}}{\bar{H}_e^+ \Big( Q \, T^{{\{ \bar{H}_e^+ \}}_Q} \ - \ Q^{-1} \, T^{-{\{ \bar{H}_e^+ \}}_Q} \Big)} \right) \ = \ \left( \prod_{e = 1, -1} \ \frac{\Big( Q \, T^{{\{ \bar{H}_e^- \}}_Q} \Big)^{\bar{H}_e^-} - \ \Big( Q \, T^{{\{ \bar{H}_e^- \}}_Q} \Big)^{-\bar{H}_e^-}}{\bar{H}_e^- \Big( Q \, T^{{\{ \bar{H}_e^- \}}_Q} \ - \ Q^{-1} \, T^{-{\{ \bar{H}_e^- \}}_Q} \Big)} \right) \bar{X}^- \, . 
\end{equation*}
Par ailleurs, $\bar{X}^- \bar{X}^+ = \frac{1}{4} \bar{C} - \frac{1}{4} (\bar{H} +1)^2 = \bar{H}_1^- \: \! \bar{H}_{-1}^-$, donc
\begin{eqnarray*}
\psi(X^-) \: \! \psi(X^+) & = & \prod_{e = 1, -1} \ \frac{\Big( Q \, T^{{\{ \bar{H}_e^- \}}_Q} \Big)^{\bar{H}_e^-} - \ \Big( Q \, T^{{\{ \bar{H}_e^- \}}_Q} \Big)^{-\bar{H}_e^-}}{Q \, T^{{\{ \bar{H}_e^- \}}_Q} \ - \ Q^{-1} \, T^{-{\{ \bar{H}_e^- \}}_Q}} \\
& = & \psi \left( \xxp{-}{} \right) \, .
\end{eqnarray*}
\end{itemize}
 
On peut définir un inverse $\psi_1$ de $\psi$ en posant
$$ \psi_1(\bar{H}) \ = \ H \, , \quad \psi_1(\bar{X}^-) \ = \ X^- \, , $$
\begin{equation} \label{eq_uconst2}
\psi_1(\bar{X}^+) \ = \ \left( \prod_{e = 1, -1} \ \frac{H_e^+ \Big( Q \, T^{{\{ H_e^+ \}}_Q} \ - \ Q^{-1} \, T^{-{\{ H_e^+ \}}_Q} \Big)}{\Big( Q \, T^{{\{ H_e^+ \}}_Q} \Big)^{H_e^+} - \ \Big( Q \, T^{{\{ H_e^+ \}}_Q} \Big)^{-H_e^+}} \right) X^+ \, .
\end{equation}
En reprenant la discussion précédant cette démonstration, on voit que le terme à droite de l'égalité \eqref{eq_uconst2} est bien défini : il faut remarquer que la série formelle
$$ \xxn{e}{+}{}{} \ = \ 2 \: \! \sinh \! \Bigg[ \! \left( h + h' { \{ H^+_e \} }_{\! Q} \right) \! H^+_e \Bigg] \, , $$
une fois divisée par $\left( h + h' { \{ H^+_e \} }_{\! Q} \right) \! H^+_e$, est inversible dans $\CC[H, \sqrt C][[h,h']]$. \\
On prouve grâce à des arguments similaires à ceux donnés pour $\psi$ que $\psi_1$ est un morphisme de $\CC[[h,h']]$-algèbre. En particulier on obtient que
$$ \psi_1(\bar{X}^{\pm}) \: \! \psi_1(\bar{X}^{\mp}) \ = \ \frac{1}{4} C \ - \ \frac{1}{4} (H \mp 1)^2 \, , $$
ce qui implique que $\psi_1(\bar{C}) = C$ et par suite que $(\psi_1 \circ \psi)(C) = C$ et $(\psi \circ \psi_1) (\bar C) = C$. Les autres identités à vérifier, pour montrer que $\psi$ et $\psi_1$ sont inverses l'un de l'autre, sont évidentes.
\end{proof}

La proposition \ref{prop_uconst} montre en particulier que la $\CC[[h,h']]$-algèbre $U_{h,h'}(\mathfrak{sl}_2,g)$ est isomorphe à la déformation formelle constante de $U(\mathfrak{sl}_2)$ selon $h$ et $h'$. \\

Il existe une autre façon, plus naturelle, d'identifier les $\CC[[h,h']]$-modules $U_{h,h'}(\mathfrak{sl}_2,g)$ et $U(\mathfrak{sl}_2)[[h,h']]$, et par suite une façon plus naturelle de voir $U_{h,h'}(\mathfrak{sl}_2,g)$ comme une déformation formelle de $U(\mathfrak{sl}_2)$ selon $h$ et $h'$. On identifie la base $\Big( (\bar{X}^-)^a \bar{H}^b (\bar{X}^+)^c \Big)_{a,b,c \in \NN}$ de $U(\mathfrak{sl}_2)$ avec la famille $\Big( (X^-)^a H^b (X^+)^c \Big)_{a, b, c \in \NN}$ de $U_{h,h'}(\mathfrak{sl}_2,g)$, qu'on vérifie en effet être une base topologique dans la proposition suivante.

\begin{proposition} \label{prop_basehh}
Soit $g \in \NN_{\geq 1}$. \\
$U_{\: \! h,h'}(\mathfrak{sl}_2,g)$ admet pour base topologique la famille $\left( (X^-)^a H^b (X^+)^c \right)_{a, b, c \in \NN} $.
\end{proposition}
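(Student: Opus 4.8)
The plan is to transport the statement to the constant deformation through the isomorphism $\psi$ of the proposition \ref{prop_uconst}. Since $\psi : \Uhh \to \Uc[[h,h']]$ is in particular an isomorphism of $\CC[[h,h']]$-modules, the family $\left( (X^-)^a H^b (X^+)^c \right)_{a,b,c}$ is a topological basis of $\Uhh$ if and only if the family $\left( \psi\big( (X^-)^a H^b (X^+)^c \big) \right)_{a,b,c}$ is a topological basis of $\Uc[[h,h']]$. First I would recall that the Poincar\'e--Birkhoff--Witt family $\left( (\bar{X}^-)^a \bar{H}^b (\bar{X}^+)^c \right)_{a,b,c}$ is a $\CC$-basis of $\Uc$, and is therefore a topological basis of the constant deformation $\Uc[[h,h']]$: any element $\sum_{\bar n} u_{\bar n} h^{\bar n}$ is expanded by developing each $u_{\bar n}$ in the PBW basis and regrouping, the summability in the $(h,h')$-adic topology being automatic since for fixed $\bar n$ only finitely many PBW monomials occur.

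The second step is to compare the two families modulo $(h,h')$. As $\psi$ is a morphism of algebras with $\psi(H) = \bar{H}$ and $\psi(X^-) = \bar{X}^-$, one has $\psi\big( (X^-)^a H^b (X^+)^c \big) = (\bar{X}^-)^a \bar{H}^b (\bar{F}\,\bar{X}^+)^c$, where $\bar{F}$ is the factor appearing in \eqref{eq_uconst}. Using the $\sinh$ expressions computed just before the proof of the proposition \ref{prop_uconst}, each factor of $\bar{F}$ equals $\frac{2\sinh\!\big( u\,\bar{H}_e^+ \big)}{\bar{H}_e^+ \cdot 2\sinh(u)}$ with $u := h + h' {\{ \bar{H}_e^+ \}}_Q \in (h,h')$, which is of the form $1 + O(u^2)$ and hence reduces to $1$ modulo $(h,h')$. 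Applying the reduction ring homomorphism $\Uc[[h,h']] \to \Uc$, which sends $\bar{F}\,\bar{X}^+$ to $\bar{X}^+$, yields
$$ \psi\big( (X^-)^a H^b (X^+)^c \big) \ \equiv \ (\bar{X}^-)^a \bar{H}^b (\bar{X}^+)^c \quad \mathrm{mod} \ (h,h') \, . $$

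It then remains to invoke a topological Nakayama argument: in a topologically free module $V[[h,h']]$, a family whose reduction modulo $(h,h')$ is a $\CC$-basis of $V$ is itself a topological basis. I would justify this by viewing the change-of-basis operator $\Phi$ that sends $(\bar{X}^-)^a \bar{H}^b (\bar{X}^+)^c$ to $\psi\big( (X^-)^a H^b (X^+)^c \big)$ as $\Phi = \mathrm{id} + \Psi$, where $\Psi$ maps each PBW monomial into $(h,h')\,\Uc[[h,h']]$. This $\Psi$ is topologically nilpotent, so $\Phi$ is well defined and continuous and admits the inverse $\sum_{k \geq 0} (-\Psi)^k$, which converges in the $(h,h')$-adic topology. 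Hence the image family is a topological basis of $\Uc[[h,h']]$, and applying $\psi^{-1}$ transports this conclusion to $\Uhh$.

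I expect the only genuinely delicate point to be the convergence and invertibility in this last step: one must check that at each order the coefficients failing to lie in $(h,h')^N$ are finite in number, so that both $\Phi$ and its Neumann series are well defined continuous operators on the completed module. The reduction computation for $\bar{F}$ and the PBW reminder are routine, and everything downstream is formal once this Nakayama-type lemma is in place.
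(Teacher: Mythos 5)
Your proposal is correct and takes essentially the same route as the paper: the paper's own proof consists precisely of invoking the isomorphism $\psi$ of the proposition \ref{prop_uconst} together with the Poincar\'e--Birkhoff--Witt theorem for $\mathfrak{sl}_2$, via the congruences $\psi(H) \equiv \bar{H}$ and $\psi(X^{\pm}) \equiv \bar{X}^{\pm}$ modulo $(h,h')$ stated in \eqref{eq_psimod}. The details you supply --- the computation showing that the factor $\bar{F}$ reduces to $1$ modulo $(h,h')$, and the Neumann-series (topological Nakayama) argument turning the congruence into the basis statement --- are exactly what the paper leaves implicit, and they are sound.
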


\begin{proof}
On utilise l'isomorphisme $\psi$ de la proposition \ref{prop_uconst} et le théorème de Poincaré-Birkhoff-Witt pour $\mathfrak{sl}_2$, en remarquant que
\begin{equation} \label{eq_psimod}
\psi(H) \ \equiv \ \bar{H} \ \text{ mod }(h,h') \, , \quad \ \psi(X^\pm) \ \equiv \bar{X}^{\pm} \ \text{ mod }(h,h') \, .
\end{equation}
\end{proof}

Comme indiqué plus haut, l'existence de cette base topologique fournit un isomorphisme $\CC[[h,h']]$-linéaire naturel $B^{h,h'} : \Uc[[h,h']] \stackrel{\sim}{\to} \Uhh$, puis, par transport, une structure de $\CC[[h,h']]$-algèbre sur $U(\mathfrak{sl}_2)[[h,h']]$. \\
En considérant ci-dessous $\Uc[[h,h']]$ à gauche muni de cette structure et $\Uc[[h,h']]$ à droite comme la déformation constante de $\Uc$,
$$ \psi \circ B^{h,h'} \, : \ \Uc[[h,h']] \ \to \ \Uc[[h,h']] $$
est alors un isomorphisme de $\CC[[h,h']]$-algèbre. En utilisant \eqref{eq_psimod} on en conclut que la structure transportée par $B^{h,h'}$ sur $\Uc[[h,h']]$ est une déformation formelle de $U(\mathfrak{sl}_2)$ par rapport à $h$ et $h'$, qu'on notera par abus de langage $\Uhh$. \\

$U_h(\mathfrak{sl}_2)$ admet comme base topologique la famille de monômes $(\tilde{X}^-)^a \tilde{H}^b (\tilde{X}^+)^c$ avec $a,b,c \in \NN$ : voir par exemple \cite[6.4]{chari}. De même que précédemment, par transport on définit sur $U(\mathfrak{sl}_2)[[h]]$ une structure de déformation formelle de $U(\mathfrak{sl}_2)$ par rapport à $h$, qu'on notera par abus de langage $\Uh$. \\

Dans le quotient $U_{h,h'}(\mathfrak{sl}_2,g) / (h'=0)$ les relations suivantes sont vérifiées : $[H, X^{\pm}] = \pm 2 \: \! X^{\pm}$ et
\begin{eqnarray*}
[X^+, X^-] & = &  X^+ X^- \ - \ X^- X^+ \\
& = & \frac{Q^{\sqrt{C}} + Q^{-\sqrt{C}}}{(Q - Q^{-1})^2} \ - \ \frac{Q^{H - 1} + Q^{-H+1}}{(Q - Q^{-1})^2} \ - \ \frac{Q^{\sqrt{C}} + Q^{-\sqrt{C}}}{(Q - Q^{-1})^2} \ + \ \frac{Q^{H + 1} + Q^{-H-1}}{(Q - Q^{-1})^2} \\
& = & \frac{Q^H - Q^{-H}}{Q - Q^{-1}} \, .
\end{eqnarray*}

Cela prouve l'existence d'un morphisme de $\CC[[h]]$-algèbre
$$ \Uh = \Uc[[h]] \ \longrightarrow \ \Uhh / (h'=0) = \Uc[[h,h']] / (h'=0) = \Uc[[h]] $$
égal à l'idendité. En d'autres mots $\Uhh = \Uc[[h,h']] = \Uc[[h]][[h']]$ est une déformation formelle selon $h'$ de $\Uh = \Uc[[h]]$. \\

De manière similaire, on voit que $\Uhh$ est une déformation formelle de $\Urh{h'}$ selon $h$ : il faut remarquer que dans le quotient $U_{h,h'}(\mathfrak{sl}_2,g) / (h=0)$ on a $P(Q^{H_e^{\pm}}) = 1$. \\

L'isomorphisme $\psi$ de la proposition \ref{prop_uconst} induit un isomorphisme de $\CC[[h]]$-algèbre
$$ \varphi \ := \ \lim_{h' \to 0} \psi \, : \ U_h(\mathfrak{sl}_2) \ \stackrel{\sim}{\longrightarrow} \ U(\mathfrak{sl}_2)[[h]] \, , $$
qui n'est autre que l'isomorphisme de \cite[prop. 6.4.6]{chari}. Pour le voir, il suffit de calculer $\varphi$ sur les générateurs $\tilde{X}^{\pm}$ et $\tilde H$, faisons-le pour $\tilde{X}^+$ (c'est évident pour les autres) :
\begin{eqnarray*}
\varphi(\tilde{X}^+) & = & \left( \prod_{e=1,-1} \frac{Q^{\bar{H}_e^+} - Q^{- \bar{H}_e^+}}{\bar{H}_e^+ \left( Q - Q^{-1} \right)} \right) \bar{X}^+ \\
& = & 4 \left( \frac{Q^{\sqrt{\bar{C}}} + Q^{-\sqrt{\bar{C}}} - Q^{\bar{H} - 1} - Q^{-\bar{H} +1}}{\left( \bar{C} - (\bar{H}-1)^2 \right) \left( Q - Q^{-1} \right)^2} \right) \bar{X}^+ \\
& = & 2 \left( \frac{\cosh h(\bar{H} -1) - \: \! \cosh h \sqrt{\bar{C}}}{\left( (\bar{H}-1)^2 - \bar{C} \right) \sinh^2 h} \right) \bar{X}^+ \, .
\end{eqnarray*}

On pose $\phi$ l'unique isomorphisme de $\CC[[h,h']]$-algèbre tel que le diagramme suivant est commutatif :
$$ \xymatrix{
&& U_h(\mathfrak{sl}_2) [[h']] \ar[rrd]_-{\sim}^-{\mathcal{Q}^{h'} (\varphi)} && \\
U_{h,h'} (\mathfrak{sl}_2,g) \ar[rru]^-{\phi}_-{\sim} \ar[rrrr]^-{\psi}_-{\sim} && && U(\mathfrak{sl}_2) [[h,h']]
} $$

De la même manière, on a l'existence d'un isomorphisme de $\CC[[h']]$-algèbre
$$ \varphi' \ := \ \lim_{h \to 0} \psi \, : \ U_{h'}(\mathfrak{sl}_2) \ \stackrel{\sim}{\longrightarrow} \ U(\mathfrak{sl}_2)[[h']] \, , $$
et d'un unique isomorphisme $\phi'$ de $\CC[[h,h']]$-algèbre, de telle sorte que le diagramme précédent se complète par symétrie en le diagramme commutatif suivant :
\begin{eqnarray} \label{eq_diag1}
\xymatrix{
&& U_h(\mathfrak{sl}_2) [[h']] \ar[rrd]_-{\sim}^-{\mathcal{Q}^{h'} (\varphi)} && \\
U_{h,h'} (\mathfrak{sl}_2,g) \ar[rru]^-{\phi}_-{\sim} \ar[rrd]_-{\phi'}^-{\sim} \ar[rrrr]^-{\psi}_-{\sim} && && U(\mathfrak{sl}_2) [[h,h']] \\
&& U_{h'}(\mathfrak{sl}_2) [[h]] \ar[rru]^-{\sim}_-{\mathcal{Q}^{h} (\varphi')} && 
} \end{eqnarray}

On résume la discussion précédente dans le théorème \ref{thm_uhhdefor} suivant, qu'on illustre par le diagramme commutatif
$$ \xymatrix{
&& \Uhh \ar[dll]_-{\lim_{h' \to 0} \ } \ar[drr]^-{\lim_{h \to 0}} \ar[dd]^-{\lim_{h,h' \to 0}} && \\
\Uh \ar[drr]_-{\lim_{h \to 0}} && && \Urh{\: \! h'} \ar[dll]^-{\ \lim_{h' \to 0}} \\
&& \Uc &&
} $$

\begin{theorem} \label{thm_uhhdefor}
\begin{itemize}
\item[1)] $U_{\: \! h,h'}(\mathfrak{sl}_2,g)$ est une déformation formelle triviale de $U(\mathfrak{sl}_2)$ selon $h, h'$.
\item[2)] $U_{\: \! h,h'}(\mathfrak{sl}_2,g)$ est une déformation formelle triviale de $U_h(\mathfrak{sl}_2)$ selon $h'$.
\item[3)] $U_{\: \! h,h'}(\mathfrak{sl}_2,g)$ est une déformation formelle triviale de $U_{h'}(\mathfrak{sl}_2)$ selon $h$.
\end{itemize}
\end{theorem}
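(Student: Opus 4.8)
The plan is to deduce the three statements from the explicit isomorphisms already constructed, reading off in each case that the relevant comparison map induces the identity on the classical limit, which is precisely the defining condition for a formal deformation to be trivial.

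For part 1), I would invoke Proposition \ref{prop_uconst}: the map $\psi$ is a $\CC[[h,h']]$-algebra isomorphism from $\Uhh$ onto the constant formal deformation $\Uc[[h,h']]$. Transporting through the basis identification $B^{h,h'}$ of Proposition \ref{prop_basehh}, I regard $\Uhh$ as a formal deformation of $\Uc$ carried on $\Uc[[h,h']]$, so that $\psi\circ B^{h,h'}$ becomes a self-isomorphism of $\Uc[[h,h']]$ from this deformed product to the constant one. It then remains only to check that it induces the identity modulo $(h,h')$: since $B^{h,h'}$ is the identity on PBW monomials and \eqref{eq_psimod} gives $\psi(H)\equiv\bar H$ and $\psi(X^\pm)\equiv\bar X^\pm \bmod (h,h')$, the composite sends $(\bar X^-)^a\bar H^b(\bar X^+)^c$ to itself modulo $(h,h')$. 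Hence the deformation is equivalent to the constant one, i.e. trivial.

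For parts 2) and 3) I would use the two auxiliary isomorphisms $\phi$ and $\phi'$ from the commutative diagram \eqref{eq_diag1}. The map $\phi\colon \Uhh \to \Uh[[h']]$ is a $\CC[[h,h']]$-algebra isomorphism onto the constant deformation $\mathcal{Q}^{h'}(\Uh)$ of $\Uh$ with respect to $h'$, and symmetrically $\phi'\colon \Uhh \to \Urh{h'}[[h]]$ lands in $\mathcal{Q}^{h}(\Urh{h'})$. The single verification that matters is that $\phi$ reduces to the identity modulo $h'$ (resp. $\phi'$ modulo $h$), and this I would extract directly from the diagram: commutativity gives $\psi = \mathcal{Q}^{h'}(\varphi)\circ\phi$, whence $\lim_{h'\to 0}\phi = \big(\lim_{h'\to 0}\mathcal{Q}^{h'}(\varphi)\big)^{-1}\circ \lim_{h'\to 0}\psi = \varphi^{-1}\circ\varphi = \mathrm{id}_{\Uh}$, using that $\varphi = \lim_{h'\to 0}\psi$ by definition and that the classical limit of a constant deformation recovers the underlying morphism. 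The symmetric computation with $\varphi' = \lim_{h\to 0}\psi$ yields $\lim_{h\to 0}\phi' = \mathrm{id}_{\Urh{h'}}$. Thus $\phi$ and $\phi'$ are equivalences to the respective constant deformations, proving 2) and 3).

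The genuine content is therefore already packaged in Proposition \ref{prop_uconst} and in the construction of the diagram \eqref{eq_diag1}; the theorem only reorganizes it. The step I would be most careful about is the bookkeeping of the module identifications: one must keep track of which copy of $\Uc[[h,h']]$ (resp. $\Uh[[h']]$, $\Urh{h'}[[h]]$) carries the transported, deformed product and which carries the constant product, so that the phrase \emph{induces the identity modulo the relevant ideal} is meaningful and is tested on the correct map. Once this is pinned down, each of the three triviality claims follows at once from the corresponding classical-limit computation above, and I expect no further obstacle.
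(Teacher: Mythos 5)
Your proposal is correct and takes essentially the same route as the paper: part 1) via the isomorphism $\psi$ of Proposition \ref{prop_uconst} transported through the PBW identification $B^{h,h'}$ of Proposition \ref{prop_basehh} together with the congruences \eqref{eq_psimod}, and parts 2) and 3) via the isomorphisms $\phi$ and $\phi'$ of the commutative diagram \eqref{eq_diag1} onto the constant deformations $\mathcal{Q}^{h'}(\Uh)$ and $\mathcal{Q}^{h}(\Urh{h'})$. Your explicit check that $\lim_{h' \to 0}\phi = \mathrm{id}$ and $\lim_{h \to 0}\phi' = \mathrm{id}$ is left implicit in the paper's discussion preceding the theorem, but it is exactly the intended argument, so there is nothing to add.
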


\begin{rem}
La cohomologie de Hochschild de $U(\mathfrak{sl}_2)$ est nulle en degré 2 : voir par exemple \cite[II.11]{guichardet2}. le théorème \ref{thm_hoch} et le lemme \ref{lem_hoch} donnent donc une preuve que les déformations formelles précédentes sont triviales. La proposition \ref{prop_uconst} a toutefois l'avantage de fournir explicitement des isomorphismes avec les déformations constantes.
\end{rem}

\begin{rem}
Le théorème \ref{thm_uhhdefor} signifie en particulier que les groupes quantiques d'interpolation $\Uhh$ ne sont en fait pas plus gros que l'algèbre enveloppante $U(\mathfrak{sl}_2)$ (seul l'espace des scalaires est grossi). Par ailleurs, il permet de formuler le fait qu'ils sont des objets comparables à $U(\mathfrak{sl}_2)$.
\end{rem}

\subsection{Corollaires}
Ici on donne des corollaires des résultats de la section précédente. Ces résultats sont nouveaux. \\
Le premier est une conséquence immédiate de la proposition \ref{prop_basehh}. Il précise que $\Uhh$ est pour ainsi dire défini à partir de seulement trois éléments $X^-$, $H$ et $X^+$.

\begin{corollary} \label{cor_xheng}
$U_{h,h'}(\mathfrak{sl}_2,g)$ est topologiquement engendrée par les éléments $X^{\pm}$ et $H$.
\end{corollary}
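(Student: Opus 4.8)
Le plan est de déduire ce corollaire directement de la proposition \ref{prop_basehh}. Notons $B$ la sous-$\CC[[h,h']]$-algèbre fermée de $U_{h,h'}(\mathfrak{sl}_2,g)$ topologiquement engendrée par $X^+$, $X^-$ et $H$, c'est-à-dire l'adhérence pour la topologie $\bar h$-adique de la sous-algèbre engendrée algébriquement par ces trois éléments. Comme $B$ est une sous-algèbre, elle contient tous les monômes $(X^-)^a H^b (X^+)^c$ (avec $a,b,c \in \NN$), puisque ceux-ci ne sont que de simples produits de $X^-$, $H$ et $X^+$, le générateur $C$ n'y intervenant pas.

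Il reste alors à vérifier que $B$ est l'algèbre tout entière. D'après la proposition \ref{prop_basehh}, la famille $\left( (X^-)^a H^b (X^+)^c \right)_{a,b,c \in \NN}$ est une base topologique : tout élément de $U_{h,h'}(\mathfrak{sl}_2,g)$ s'écrit comme une combinaison $\CC[[h,h']]$-linéaire convergente (pour la topologie $\bar h$-adique) de ces monômes. Or $B$ contient tous ces monômes et est fermée, donc elle contient toutes leurs combinaisons convergentes; ainsi $B = U_{h,h'}(\mathfrak{sl}_2,g)$. En particulier $C$ appartient à $B$, ce qui établit que $C$ est superflu dans la liste des générateurs, comme annoncé à la remarque \ref{rem_hernandezgen}.

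Il n'y a ici aucune difficulté réelle : le travail a été entièrement effectué en amont, dans la preuve de la proposition \ref{prop_basehh}, laquelle repose sur l'isomorphisme $\psi$ de la proposition \ref{prop_uconst} et sur le théorème de Poincaré--Birkhoff--Witt pour $\mathfrak{sl}_2$. Le seul point à soigner est l'observation, élémentaire mais essentielle, qu'une sous-algèbre fermée contenant une base topologique coïncide avec l'algèbre entière. On pourrait aussi, de façon plus calculatoire et moins économique, exhiber directement $C$ en inversant la relation \eqref{eq_defx}, dont la limite classique redonne déjà $\bar{C} = 4 \, \bar{X}^+ \bar{X}^- + (\bar{H} - 1)^2$ d'après \eqref{eq_casimir}; mais l'argument par la base topologique est nettement plus direct.
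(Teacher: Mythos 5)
Votre argument est correct et suit exactement la même voie que l'article : celui-ci déduit le corollaire comme conséquence immédiate de la proposition \ref{prop_basehh}, et vous ne faites qu'expliciter proprement cette déduction (une sous-algèbre fermée contenant une base topologique est l'algèbre entière). Rien à redire.
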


Comme conséquences immédiates de la proposition \ref{prop_uconst}, on a les deux corollaires suivants.
\begin{corollary}
$U_{\: \! h,h'}(\mathfrak{sl}_2,g)$ est intègre.
\end{corollary}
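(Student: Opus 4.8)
The plan is to reduce, via Proposition \ref{prop_uconst}, to an elementary fact about formal power series rings. Since $\psi$ is an isomorphism of $\CC[[h,h']]$-algebras, $\Uhh$ is a domain if and only if the constant formal deformation $\Uc[[h,h']]$ is. I would first recall that $\Uc$ itself is a domain: by the Poincar\'e--Birkhoff--Witt theorem the graded algebra associated to $\Uc$ for its canonical filtration is the polynomial algebra $\CC[\bar{X}^-, \bar{H}, \bar{X}^+]$, which is a domain, and a filtered algebra whose associated graded is a domain is itself a domain.

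It then remains to establish the following general fact: if $A$ is a (possibly noncommutative) $\CC$-algebra with no zero divisors, then $A[[\bar{h}]]$, equipped with the constant multiplication (coefficientwise convolution), has no zero divisors either. Let $f = \sum_{\bar n} a_{\bar n} h^{\bar n}$ and $g = \sum_{\bar n} b_{\bar n} h^{\bar n}$ be two nonzero elements. The idea is a lowest-term argument, which requires a total well-order on $\NN^s$ compatible with addition; I would take the lexicographic order $\prec$, for which every nonempty subset of $\NN^s$ has a least element. Let $\bar p$ (resp. $\bar q$) be the $\prec$-least element of the support of $f$ (resp. of $g$).

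The heart of the argument is to identify the coefficient of $h^{\bar p + \bar q}$ in $fg$. If $\bar i + \bar j = \bar p + \bar q$ with $a_{\bar i} \neq 0$ and $b_{\bar j} \neq 0$, then $\bar p \preceq \bar i$ and $\bar q \preceq \bar j$ by minimality, and compatibility of $\prec$ with addition gives $\bar p + \bar q \preceq \bar i + \bar q \preceq \bar i + \bar j = \bar p + \bar q$; hence equality holds throughout, and cancellation in $\NN^s$ yields $\bar i = \bar p$ and $\bar j = \bar q$. The coefficient of $h^{\bar p + \bar q}$ in $fg$ therefore reduces to $a_{\bar p} \: \! b_{\bar q}$, which is nonzero because $A$ has no zero divisors; thus $fg \neq 0$.

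I do not expect a genuine obstacle here, the result being an immediate consequence of Proposition \ref{prop_uconst}; the only point requiring care is that $A = \Uc$ is noncommutative, so that the leading-term argument must be phrased so as to remain valid for coefficients in a noncommutative ring. This is precisely what the use of a monomial order (a well-order on the exponents compatible with addition) guarantees, and since the multiplication here is constant no higher-order corrections intervene.
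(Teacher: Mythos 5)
Your proof is correct and follows the same route as the paper, which states this corollary as an immediate consequence of Proposition \ref{prop_uconst}: transport the question through the isomorphism $\psi$ to the constant deformation $\Uc[[h,h']]$ and observe that it is a domain. The paper leaves the latter point implicit; your lowest-term argument with a monomial well-order on $\NN^s$ (valid over a noncommutative coefficient ring) and the PBW argument for $\Uc$ simply supply the standard details.
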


\begin{corollary} \label{cor_eng}
Le centre de $U_{h,h'}(\mathfrak{sl}_2,g)$ est topologiquement engendré par $C$ et isomorphe à $\CC[C] [[h,h']]$.
\end{corollary}

On donne maintenant un résultat de décomposition triangulaire, analogue à ceux que l'on connaît pour $\Uc$ et $\Uh$. \\
On note $U^-_{h,h'}(\mathfrak{sl}_2,g)$, $U^0_{h,h'}(\mathfrak{sl}_2,g)$ et $U^+_{h,h'}(\mathfrak{sl}_2,g)$ les sous-$\CC[[h,h']]$-algèbres de $U_{h,h'}(\mathfrak{sl}_2,g)$, topologiquement engendrées respectivement par $X^-$, $H$ et $X^+$. \\
D'après la proposition \ref{prop_basehh}, elles sont, en tant que $\CC[[h,h']]$-algèbres, canoniquement isomorphes respectivement à $\CC[X^-][[h,h']]$, $\CC[H][[h,h']]$ et $\CC[X^+][[h,h']]$.

\begin{proposition} \label{prop_triang}
$U_{\: \! h,h'}(\mathfrak{sl}_2,g)$ admet une décomposition triangulaire :
\begin{equation} \label{eq_triang}
U_{h,h'}(\mathfrak{sl}_2,g) \ \simeq \ U^-_{h,h'}(\mathfrak{sl}_2,g) \ \tilde{\otimes}_{\CC[[h,h']]} \, \ U^0_{h,h'}(\mathfrak{sl}_2,g) \ \tilde{\otimes}_{\CC[[h,h']]} \ \, U^+_{h,h'}(\mathfrak{sl}_2,g) \, ,
\end{equation}
où l'isomorphisme de $\CC[[h,h']]$-modules est défini par la multiplication de $U_{h,h'}(\mathfrak{sl}_2,g)$ :
$$ x^- \otimes x^0 \otimes x^+ \ \mapsto \ x^- \cdot x^0 \cdot x^+ \, , \quad \text{ avec } \ x^s \in U^s_{h,h'}(\mathfrak{sl}_2,g), \ s \in \{-,0,+ \} \, . $$
\end{proposition}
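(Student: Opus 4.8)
The plan is to identify both sides of \eqref{eq_triang} as completed $\CC[[h,h']]$-modules of the form $V[[h,h']]$, to write down explicit topological bases of each, and then to check that the multiplication map carries one basis onto the other. First I would invoke the identifications recalled just before the statement: by Proposition \ref{prop_basehh}, the subalgebras $U^-_{h,h'}(\mathfrak{sl}_2,g)$, $U^0_{h,h'}(\mathfrak{sl}_2,g)$ and $U^+_{h,h'}(\mathfrak{sl}_2,g)$ are canonically isomorphic as $\CC[[h,h']]$-algebras to $\CC[X^-][[h,h']]$, $\CC[H][[h,h']]$ and $\CC[X^+][[h,h']]$ respectively, so each is a free topological $\CC[[h,h']]$-module whose topological basis consists of the powers of its generator. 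Since the completed tensor product was defined by $V[[h,h']] \, \tilde\otimes \, W[[h,h']] := (V \otimes_{\CC} W)[[h,h']]$, the right-hand side of \eqref{eq_triang} is then isomorphic as a $\CC[[h,h']]$-module to $\big( \CC[X^-] \otimes_{\CC} \CC[H] \otimes_{\CC} \CC[X^+] \big)[[h,h']]$, which admits the topological basis $\big( (X^-)^a \otimes H^b \otimes (X^+)^c \big)_{a,b,c \in \NN}$.

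Next I would realize the multiplication map $m$ as the unique continuous $\CC[[h,h']]$-linear extension of $x^- \otimes x^0 \otimes x^+ \mapsto x^- \cdot x^0 \cdot x^+$. This extension exists and is well defined because the product of $\Uhh$ is continuous for the $\bar h$-adic topology and both sides are $\bar h$-adically complete and separated. On the chosen topological basis, $m$ sends $(X^-)^a \otimes H^b \otimes (X^+)^c$ to the monomial $(X^-)^a H^b (X^+)^c$. The crucial observation is then that, by Proposition \ref{prop_basehh} again, the family $\big( (X^-)^a H^b (X^+)^c \big)_{a,b,c \in \NN}$ is \emph{precisely} a topological basis of $\Uhh$. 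Thus $m$ is a continuous $\CC[[h,h']]$-linear map carrying a topological basis of the source bijectively onto a topological basis of the target, both indexed by $\NN^3$. Expanding an arbitrary element as its unique $\bar h$-adically convergent series along these bases shows at once that $m$ is injective and surjective, hence the desired $\CC[[h,h']]$-module isomorphism.

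The bookkeeping is routine, and the heart of the argument is simply that multiplication matches the two product bases supplied by Proposition \ref{prop_basehh}. The one point requiring genuine care, and which I expect to be the main obstacle, is the passage from the algebraic tensor product to its $\bar h$-adic completion: one must verify that the monomials $(X^-)^a \otimes H^b \otimes (X^+)^c$ genuinely form a topological basis of $U^-_{h,h'}(\mathfrak{sl}_2,g) \, \tilde\otimes \, U^0_{h,h'}(\mathfrak{sl}_2,g) \, \tilde\otimes \, U^+_{h,h'}(\mathfrak{sl}_2,g)$, and that $m$ is defined and continuous on this completion rather than merely on the dense algebraic tensor product. Once this completion issue is settled by appeal to the definition of $\tilde\otimes$ and to completeness and separatedness for the $\bar h$-adic topology, the conclusion follows immediately.
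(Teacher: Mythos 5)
Your proof is correct and follows exactly the paper's route: the paper's own proof is the one-line statement that the result is a consequence of Proposition \ref{prop_basehh}, and your argument simply spells out the details of that deduction (identifying topological bases of both sides via Proposition \ref{prop_basehh} and the definition of $\tilde\otimes$, then checking that multiplication matches them).
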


\begin{proof}
C'est une conséquence de la proposition \ref{prop_basehh}.
\end{proof}

\subsection{Autre présentation}
On donne ici une autre présentation des algèbres $\Uhh$. Elle présente l'avantage de ne faire appel qu'aux générateurs $X^{\pm}$ et $H$. Les résultats ici sont nouveaux. \\

$U_{\: \! h,h'}(\mathfrak{sl}_2,g)$ est engendrée par $X^{\pm}$, $H$ et $C$. Le corollaire \ref{cor_xheng} signifie donc que $C$ peut s'écrire en fonction de $X^{\pm}$ et $H$. Plus précisément, on a le résultat suivant.

\begin{defiprop} \label{defiprop_cxx}
Soit $\CC \langle X^{\pm}, H \rangle^0$ le sous-$\CC$-espace vectoriel de $\CC \langle X^{\pm}, H \rangle$ engendré par les monômes de la forme
$$ (X^-)^a H^b (X^+)^a \, , \quad a, b \in \NN \, . $$

\begin{itemize}
\item[1)] Il existe une unique famille d'éléments $C_{n,m} \in \CC \langle X^{\pm}, H \rangle^0$, avec $n,m \in \NN$, telle que dans $U_{\: \! h,h'}(\mathfrak{sl}_2,g)$
$$ C \ = \ \sum_{n,m \, \in \, \NN} C_{n,m} \, h^n (h')^m \, . $$

\item[2)] Il existe une unique famille d'éléments $\alpha_{n,m} \in \CC \langle X^{\pm}, H \rangle^0$, avec $n,m \in \NN$, telle que dans $U_{\: \! h,h'}(\mathfrak{sl}_2,g)$
$$ [X^+, X^-] \ = \ \sum_{n,m \, \in \, \NN} \alpha_{n,m} \, h^n (h')^m \, . $$
\end{itemize}
\end{defiprop}

\begin{proof}
La proposition \ref{prop_basehh} implique que le sous-$\CC[[h,h']]$-module de $\Uhh$ topologiquement engendré par les monômes de la forme
$$ (X^-)^a H^b (X^+)^a \, , \quad a, b \in \NN $$
est égal à la sous-$\CC[[h,h']]$-algèbre de $\Uhh$ commutant avec $H$. En effet on a
$$ [H, (X^-)^a H^b (X^+)^c] \ = \ 2 \: \! (c-a) \: \! (X^-)^a H^b (X^+)^c \, , \quad \text{ pour } a,b,c \in \NN \, . $$
Puisque $C$ appartient au centre de $\Uhh$ et puisque $[X^+, X^-]$ commute avec $H$, on obtient les deux points de la proposition.
\end{proof}

\begin{proposition} \label{prop_autrepres}
Soient $g \in \NN_{\geq 1}$ et $V_{h,h'}(\mathfrak{sl}_2,g)$ la $\CC[[h,h']]$-algèbre topologiquement engendrée par $X^\pm$, $H$ et définie par les relations
$$ [H, X^{\pm}] \ = \ \pm 2 \, X^{\pm} \, , \quad \quad [X^+, X^-] \ = \ \sum_{n,m \, \in \, \NN} \alpha_{n,m} \, h^n (h')^m \, . $$
Il existe un isomorphisme de $\CC[[h,h']]$-algèbre
$$ \varphi \, : \ V_{h,h'}(\mathfrak{sl}_2,g) \ \stackrel{\sim}{\longrightarrow} \ \Uhh \, , $$
uniquement déterminé par $\varphi(X^{\pm}) = X^{\pm}$ et $\varphi(H) = H$. En outre, $\varphi$ vérifie :
$$ \varphi^{-1}(C) \ = \ \sum_{n,m \in \NN} C_{n,m} \, h^n (h')^m \, .$$
\end{proposition}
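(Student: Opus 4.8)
Je construirais d'abord le morphisme $\varphi$, puis je ramènerais sa bijectivité à un énoncé de base topologique, dont la vérification constitue le c\oe ur technique. D'après le point 4) de la proposition \ref{prop_props}, pour obtenir un morphisme de $\CC[[h,h']]$-algèbre $\varphi : V_{h,h'}(\mathfrak{sl}_2,g) \to \Uhh$ envoyant $X^\pm$ sur $X^\pm$ et $H$ sur $H$, il suffit de vérifier que les relations définissant $V_{h,h'}(\mathfrak{sl}_2,g)$ sont satisfaites dans $\Uhh$ par ces éléments. Les relations $[H,X^\pm] = \pm 2\, X^\pm$ tiennent par définition de $\Uhh$, et la relation $[X^+,X^-] = \sum_{n,m} \alpha_{n,m}\, h^n (h')^m$ est exactement le point 2) de la définition-proposition \ref{defiprop_cxx} (en notant que l'image par $\varphi$ d'un mot $\alpha_{n,m}$ en $X^\pm, H$ est le même mot dans $\Uhh$). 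Le morphisme $\varphi$ est unique puisque $X^\pm$ et $H$ engendrent topologiquement $V_{h,h'}(\mathfrak{sl}_2,g)$. Pour conclure que $\varphi$ est un isomorphisme, je me ramènerais à l'énoncé : \emph{les monômes $(X^-)^a H^b (X^+)^c$, $a,b,c \in \NN$, engendrent topologiquement $V_{h,h'}(\mathfrak{sl}_2,g)$}. En effet, $\varphi$ envoie cette famille sur la famille correspondante de $\Uhh$, qui est une base topologique d'après la proposition \ref{prop_basehh}; une famille génératrice topologique dont l'image par un morphisme continu est une base topologique est elle-même une base topologique, et le morphisme est alors un isomorphisme.

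Pour l'énoncé de génération, je noterais $N$ le sous-$\CC[[h,h']]$-module fermé de $V_{h,h'}(\mathfrak{sl}_2,g)$ engendré topologiquement par les monômes $(X^-)^a H^b (X^+)^c$. Comme $V_{h,h'}(\mathfrak{sl}_2,g)$ est complète, séparée, et topologiquement engendrée par $X^\pm, H$, il suffit de montrer que $N$ est fermé, contient $1$, et est stable par multiplication à gauche par $X^-$, $H$ et $X^+$ : $N$ contiendra alors tout mot en les générateurs, donc sera égal à $V_{h,h'}(\mathfrak{sl}_2,g)$. La stabilité par $X^-$ est immédiate, et la stabilité par $H$ résulte de $H(X^-)^a = (X^-)^a (H - 2a)$, conséquence de $[H,X^-] = -2\,X^-$. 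Le seul cas délicat est la multiplication à gauche par $X^+$, c'est-à-dire la réécriture de $X^+ (X^-)^a H^b (X^+)^c$ en combinaison de monômes normaux : on déplace $X^+$ vers la droite à travers $H^b$ grâce à $X^+ H = (H-2)\,X^+$, et à travers $(X^-)^a$ grâce à $X^+ X^- = X^- X^+ + [X^+,X^-]$, avec $[X^+,X^-] = \sum_{n,m} \alpha_{n,m}\, h^n (h')^m$.

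Le point principal, et l'obstacle attendu, est la \emph{convergence} de ce processus de réécriture. L'observation clé est que la limite classique de $[X^+,X^-]$ dans $\Uc$ vaut $\bar{H}$, de sorte que $\alpha_{0,0} = H$ et que tout terme de $\sum_{n,m} \alpha_{n,m}\, h^n (h')^m$ autre que $\alpha_{0,0}$ est affecté d'un facteur $h^n (h')^m$ avec $(n,m) \neq (0,0)$, donc d'ordre $(h,h')$-adique strictement positif. Modulo $(h,h')$, la relation se réduit à $[X^+,X^-] = H$ et la réécriture est exactement la mise sous forme normale de Poincaré--Birkhoff--Witt dans $\Uc$, laquelle se termine. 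Pour passer de $(h,h')^N$ à $(h,h')^{N+1}$, les corrections issues des termes $\alpha_{n,m}$, $(n,m) \neq (0,0)$, sont d'ordre $(h,h')$-adique $\geq 1$ : une récurrence sur $N$ (base : le cas classique; pas : chaque étape soit fait strictement décroître le degré en $X$ via $\alpha_{0,0} = H$, soit augmente strictement l'ordre $(h,h')$-adique) montre que la réécriture se termine modulo chaque $(h,h')^{N+1}$. La complétude $(h,h')$-adique fournit alors l'écriture normale convergente cherchée, établissant $X^+ \cdot N \subseteq N$, puis $N = V_{h,h'}(\mathfrak{sl}_2,g)$.

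Une fois $\varphi$ reconnu comme isomorphisme, il resterait à vérifier la formule pour $\varphi^{-1}(C)$. En voyant les $C_{n,m} \in \CC\langle X^\pm, H \rangle^0$ comme éléments à la fois de $V_{h,h'}(\mathfrak{sl}_2,g)$ et de $\Uhh$, on a $\varphi\big( \sum_{n,m} C_{n,m}\, h^n (h')^m \big) = \sum_{n,m} C_{n,m}\, h^n (h')^m = C$ dans $\Uhh$, la dernière égalité étant le point 1) de la définition-proposition \ref{defiprop_cxx}; d'où $\varphi^{-1}(C) = \sum_{n,m} C_{n,m}\, h^n (h')^m$.
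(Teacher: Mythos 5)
Votre démonstration est correcte et suit essentiellement la même route que celle de l'article : construction de $\varphi$ à partir de la définition des $\alpha_{n,m}$ (via le point 4) de la proposition \ref{prop_props}), génération topologique de $V_{h,h'}(\mathfrak{sl}_2,g)$ par les monômes $(X^-)^a H^b (X^+)^c$, puis conclusion par la proposition \ref{prop_basehh}, et enfin la formule pour $\varphi^{-1}(C)$ tirée de la définition des $C_{n,m}$. La seule différence est de degré de détail : l'article affirme l'étape de génération en une phrase, tandis que votre argument de réécriture (forme normale PBW classique en la base, corrections d'ordre $(h,h')$-adique strictement croissant, passage à la limite par complétude) en fournit la justification laissée implicite.
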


\begin{proof}
D'après la définition des $\alpha_{n,m}$, il existe un unique morphisme d'algèbre
$$ \varphi \, : \ V_{h,h'}(\mathfrak{sl}_2,g) \ \to \ \Uhh \, , $$
vérifiant $\varphi(X^{\pm}) = X^{\pm}$ et $\varphi(H) = H$. \\
D'après les relations définissant l'algèbre $V_{h,h'}(\mathfrak{sl}_2,g)$, on voit que cette dernière est topologiquement engendrée par les monômes de la forme
$$ (X^-)^a H^b (X^+)^c \, , \quad \text{ avec } a,b,c \in \NN \, . $$
La proposition \ref{prop_basehh} implique alors que $\varphi$ est un isomorphisme. \\
On a $\varphi^{-1} (X^{\pm}) = X^{\pm}$ et $\varphi^{-1} (H) = H$, donc, d'après la définition des $C_{n,m}$, on voit que $\varphi^{-1}(C) = \sum_{n,m \in \NN} C_{n,m} \, h^n (h')^m$.
\end{proof}

\begin{rem} \label{rem_coeff}
\begin{itemize}
\item[1)] $\Uhh$ est une déformation de $U(\mathfrak{sl}_2)$, où la limite classique de $C$ vérifie la relation \eqref{eq_casimir} : on le vérifie en calculant la limite classique des relations \eqref{eq_defx} de $\Uhh$. On a donc
\begin{equation} \label{eq_casimircoeff}
C \ = \ 4 \: \! X^+ X^- + (H-1)^2 \ + \sum_{n+m \, \geq \, 1} C_{n,m} \, h^n (h')^m \, .
\end{equation}

\item[2)] $\Uhh$ est une déformation de $\Uh$, par suite :
$$ [X^+, X^-] \ = \ \frac{\exp (h \: \! H) - \exp (-h \: \! H)}{\exp (h) - \exp (-h)} \ + \ h' \! \sum_{n,m \, \in \, \NN} \alpha_{n,m+1} \, h^n (h')^m \, . $$
\end{itemize}
\end{rem}

\subsection{Symétries}
On donne ici des propriétés de symétrie de l'algèbre $U_{\: \! h,h'}(\mathfrak{sl}_2,g)$, analogues naturels de propriétés qui existent déjà pour $U(\mathfrak{sl}_2)$ et $\Uh$. Les résultats ici sont nouveaux.

\begin{lemma}
\begin{itemize}
\item[1)] Il existe un unique automorphisme $\omega$ de la $\CC[[h,h']]$-algèbre $U_{\: \! h,h'}(\mathfrak{sl}_2,g)$ tel que $\omega(X^{\pm}) = X^{\mp}$, $\omega(H) = -H$. Il est involutif et envoie $C$ sur $C$.

\item[2)] Il existe un unique anti-automorphisme $\tau$ de la $\CC[[h,h']]$-algèbre $U_{\: \! h,h'}(\mathfrak{sl}_2,g)$ tel que $\tau(X^{\pm}) = X^{\pm}$, $\tau(H) = -H$. Il est involutif et envoie $C$ sur $C$.
\end{itemize}
\end{lemma}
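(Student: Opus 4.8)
The plan is to build both $\omega$ and $\tau$ through the universal property of topologically presented algebras, namely point 4) of Proposition \ref{prop_props}. For $\omega$ I would specify an application $f$ on the generating set $\{X^+, X^-, H, C\}$ of $\Uhh$ by $f(X^{\pm}) = X^{\mp}$, $f(H) = -H$, $f(C) = C$, and check that the induced morphism from the free algebra annihilates the defining relations of Definition \ref{def_uhh}; this produces a $\CC[[h,h']]$-algebra endomorphism $\omega$. Uniqueness is then immediate: by Corollary \ref{cor_xheng} the algebra is topologically generated by $X^{\pm}$ and $H$, and a continuous $\CC[[h,h']]$-algebra morphism is determined by its values on topological generators. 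Once $\omega$ is known to be an endomorphism, involutivity follows formally: $\omega^2$ fixes $X^{\pm}$, $H$ and $C$, hence equals $\mathrm{id}$ by uniqueness, so $\omega$ is bijective, i.e. an automorphism, and $\omega(C) = C$ holds by construction.

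The only delicate relation to verify is \eqref{eq_defx}. Writing $F^{\pm}$ for its right-hand side, I note that $F^{\pm}$ is a topological power series in $H$ and $C$ alone (this is exactly the invariance of the product under $\sqrt C \mapsto -\sqrt C$ recorded after Definition \ref{def_uhh}), so the relevant computation takes place in the commutative subalgebra $\CC[H,C][[h,h']]$, on which $\omega$ acts as the algebra homomorphism sending $H \mapsto -H$ and $C \mapsto C$ (equivalently, formally fixing $\sqrt{C}$). Under this substitution the four quantities $H_e^{\pm} = \tfrac12(\sqrt C + eH \mp e)$ are permuted by $H_1^+ \mapsto H_{-1}^-$, $H_{-1}^+ \mapsto H_1^-$, and symmetrically; since the product in \eqref{eq_defx} runs over $e \in \{1,-1\}$ in a symmetric way, applying $\omega$ carries the relation $X^+X^- = F^+$ onto $X^-X^+ = F^-$, i.e. onto the companion relation of \eqref{eq_defx}, which indeed holds. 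The relations $[C, X^{\pm}] = [C,H] = 0$ are preserved because $C$ is central and $\omega(C) = C$, and $[H,X^{\pm}] = \pm 2 X^{\pm}$ survives a direct sign check. This completes the construction of $\omega$.

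For $\tau$ I would apply the same universal property to the opposite algebra $\Uhh^{\mathrm{op}}$, which is again complete and separated, so that a suitable $\CC[[h,h']]$-algebra morphism $\Uhh \to \Uhh^{\mathrm{op}}$ is exactly an anti-automorphism of $\Uhh$. Specifying $\tau(X^{\pm}) = X^{\pm}$, $\tau(H) = -H$, $\tau(C) = C$, one must track the reversal of products in each relation. The centrality relations and $[H, X^{\pm}] = \pm 2 X^{\pm}$ are checked as before, the sign reversal from $\tau(H) = -H$ and the reversal of the two factors cancelling. The crucial relation again reduces to the commutative subalgebra $\CC[H,C][[h,h']]$: here $\tau(X^{\pm}X^{\mp}) = \tau(X^{\mp})\tau(X^{\pm}) = X^{\mp}X^{\pm}$ on the left, while on the right $\tau$ sends $F^{\pm}(H,C)$ to $F^{\pm}(-H,C)$. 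The same permutation of the $H_e^{\pm}$ as above yields the identity $F^{\pm}(-H,C) = F^{\mp}(H,C)$, so that $X^+X^- = F^+$ is carried onto the valid relation $X^-X^+ = F^-$. Uniqueness, involutivity ($\tau^2 = \mathrm{id}$ by the generation argument of Corollary \ref{cor_xheng}), and $\tau(C) = C$ all follow exactly as for $\omega$.

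The main obstacle is thus concentrated in a single combinatorial observation: the formal involution $H \mapsto -H$, $\sqrt C \mapsto \sqrt C$ permutes the family $\{H_1^+, H_{-1}^+, H_1^-, H_{-1}^-\}$ by interchanging the superscripts $+ \leftrightarrow -$ while flipping the sign index, and the product over $e \in \{1,-1\}$ in \eqref{eq_defx} is symmetric enough to absorb this permutation, giving $F^{\pm}(-H,C) = F^{\mp}(H,C)$. Once this is recorded, everything else is routine; the only preliminary point worth confirming is that $F^{\pm}$ genuinely lies in $\CC[H,C][[h,h']]$, which is guaranteed by the invariance of the product under $\sqrt C \mapsto -\sqrt C$ already established after Definition \ref{def_uhh}.
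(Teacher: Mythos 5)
Your proposal is correct and follows essentially the same route as the paper: uniqueness from topological generation by $X^{\pm}$ and $H$, existence by checking the defining relations on the images (for $\tau$, viewed as a morphism into the opposite algebra), with the whole matter reduced to the observation that $H \mapsto -H$, $\sqrt{C} \mapsto \sqrt{C}$ sends $H_e^{\pm}$ to $H_{-e}^{\mp}$, so that the symmetric product over $e \in \{1,-1\}$ in \eqref{eq_defx} exchanges the $+$ and $-$ relations. The paper records exactly this permutation ($\omega(H_e^{\pm}) = H_{-e}^{\mp}$, $\tau(H_e^{\pm}) = H_{-e}^{\mp}$) and concludes identically; your added remarks on continuity, on $F^{\pm}$ lying in $\CC[H,C][[h,h']]$, and on involutivity via uniqueness are consistent with what the paper leaves implicit.
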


\begin{proof}
Dans les deux points, le fait que $U_{h,h'}(\mathfrak{sl}_2,g)$ est topologiquement engendrée par $X^-$, $X^+$, et $H$ implique l'unicité. Reste donc l'existence.
\begin{itemize}
\item[1)] Il suffit de vérifier que
$$ \big( \omega(X^{+}), \omega(X^-), \omega(H), \omega(C) \big) \ = \ \big( X^-, X^+, -H, C \big) $$
vérifient les relations de la définition \ref{def_uhh} dans l'algèbre $U_{h,h'}(\mathfrak{sl}_2,g)$. Faisons-le pour les relations \eqref{eq_defx}, les autres étant évidentes. $\omega(H_e^{\pm}) \ = \ H_{-e}^{\mp}$, par suite
\begin{eqnarray*}
\omega \left( X^{\pm} \right) \omega \left( X^{\mp} \right) \ = \ X^{\mp} X^{\pm} & = & \xxp{\mp}{} \\
& = & \omega \left( \xxp{\pm}{} \right) .
\end{eqnarray*}

\item[2)] Ici, il faut vérifier que
$$ \big( \tau(X^{+}), \tau(X^-), \tau(H), \tau(C) \big) \ = \ \big( X^+, X^-, -H, C \big) $$
vérifient les relations de la définition \ref{def_uhh} dans l'algèbre $U_{h,h'}^{opp}(\mathfrak{sl}_2,g)$, l'algèbre opposée de $U_{h,h'}(\mathfrak{sl}_2,g)$. Faisons-le pour les relations \eqref{eq_defx}, les autres étant évidentes. \\
$\tau (H_e^{\pm}) \ = \ H_{-e}^{\mp}$, par suite
\begin{eqnarray*}
\tau \left( X^{\pm} \right) \cdot_{opp} \left( X^{\mp} \right) \ = \ X^{\mp} X^{\pm} & = & \xxp{\mp}{} \\
& = & \tau \left( \xxp{\pm}{} \right) .
\end{eqnarray*}
\end{itemize}
\end{proof}

\subsection{Interpolation}
On définit ici rigoureusement ce que signifie spécialiser $\Uhh$ en $Q = \varepsilon$. Une fois les définitions posées, on prouve le lemme \ref{lem_fond}. Ce résultat est crucial dans notre étude, il exprime que toutes les relations de $\Urh{\! gh'}$ peuvent être obtenues à partir de la spécialisation à $Q = \varepsilon$ de $\Uhh$. \\

On note $\mathcal{A}$ la sous-$\CC$-algèbre de $\CC[[h]]$ engendrée par $Q^{\pm 1}$, canoniquement isomorphe aux polynômes de Laurent en $Q$ :
$$ \Aq \ \simeq \ \CC \! \left[ Q^{\pm 1} \right] . $$
On rappelle que $\varepsilon $ désigne la racine $(2g)$-ième primitive de l'unité $\text{e}^{i \pi / g} \in \CC$ et on pose
$$ {[g-1]}^!_{\varepsilon} \ := \ \prod_{k=1}^{g-1} \, {[k]}_{\varepsilon} \ = \ \prod_{k=1}^{g-1} \, \frac{\varepsilon^k - \varepsilon^{-k}}{\varepsilon - \varepsilon^{-1}} \ \in \ \CC^{\ast} \, . $$

\begin{definition}
Soit $g \in \NN_{\geq 1}$. \\
$U_{\mathcal{A},h'} (\mathfrak{sl}_2,g)$ est la sous $\mathcal{A}[[h']]$-algèbre de $U_{h,h'} (\mathfrak{sl}_2,g)$ topologiquement engendrée (pour la topologie ($h'$)-adique) par
$$ C \, , \ H \, , \ X^{\pm} \quad \text{et} \quad Q^{\pm H} \, , \ Q^{\sqrt{C}} + Q^{-\sqrt{C}} \, . $$
\end{definition}

La démonstration du lemme suivant est le passage le plus technique de l'article. Le lemme \ref{lem_fond} assure essentiellement que la propriété d'interpolation de $\Uhh$ en $Q = \varepsilon$ est vérifiée. Il est le noyau dur qui permettra de montrer plus loin dans l'article que la spécialisation à $Q = \varepsilon$ de $\Uhh$ contient $\Urh{gh'}$ comme sous-quotient. Par ailleurs, il généralise à tout $g \in \NN_{\geq 1}$ un résultat analogue dans \cite{hernandez}. Sa preuve n'est en toutefois pas une simple généralisation/adaptation. \\
Dans \cite{hernandez} l'algèbre quantique duale de Langlands est réalisée de plusieurs manières différentes comme sous-quotients de la spécialisation à $Q = \varepsilon$ du groupe quantique d'interpolation de Langlands. Ici, la réalisation est plus universelle puisqu'elle ne fait intervenir qu'un seul sous-quotient. De manière plus précise, on obtient l'algèbre quantique duale en imposant à $\Uhh$ moins de relations qu'il n'est fait dans \cite{hernandez} pour chacune des réalisations.

\begin{lemma} \label{lem_fond}
Dans la $\CC[[h']]$-algèbre
$$ U_{\mathcal{A}, h'} (\mathfrak{sl}_2, g) \ / \ \overline{ \big( Q = \varepsilon, \, Q^{2H} = 1, \, Q^{2 \sqrt{C}} + Q^{-2 \sqrt{C}} = \varepsilon^{2} + \varepsilon^{-2} \big) }^{\, h'} $$
où $\overline{ \ \cdot \ }^{\: \! h'}$ désigne l'adhérence pour la topologie $(h')$-adique, on a l'égalité :
\begin{equation} \label{eqlemfond}
\big( \varepsilon \, T - \varepsilon^{-1} \: \! T^{-1} \big)^2 \, \Big[ (X^+)^g , (X^-)^g \Big] \ = \ ({[g-1]}_{\varepsilon} ^!)^2 \, \big( T^g - T^{-g} \big) \big( T^H - T^{-H} \big) \, .
\end{equation}
\end{lemma}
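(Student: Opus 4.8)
The plan is to reduce the statement to an explicit computation of the two weight-zero products $(X^+)^g(X^-)^g$ and $(X^-)^g(X^+)^g$, and only then to specialize at $Q=\varepsilon$ inside the quotient.

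First I would note that each of $(X^+)^g(X^-)^g$ and $(X^-)^g(X^+)^g$ is homogeneous of weight $0$ for the adjoint action of $H$, hence lies in the centralizer of $H$ in $\Uhh$. Through the isomorphism $\psi$ of Proposition \ref{prop_uconst}, which fixes $H$ and $C$ and preserves $H$-weights, this centralizer is identified with the centralizer of $\bar H$ in $\Uc[[h,h']]$; since the weight-zero part of $\Uc$ is exactly $\CC[\bar H,\bar C]$, I conclude that both products belong to the topologically generated subalgebra $\CC[H,C][[h,h']]$, i.e. are functions of $H$ and $C$ alone. Writing $[a]$ for the quantum number $[a]_{QT^{\{a\}}}$ and $\Phi:=X^+X^-=[H_1^+][H_{-1}^+]$, $\Psi:=X^-X^+=[H_1^-][H_{-1}^-]$ (relation \eqref{eq_defx}), and using the commutation rule $f(H)\,X^{\pm}=X^{\pm}f(H\pm 2)$ together with the centrality of $C$, a short induction on $g$ telescopes to the product formulas
\[ (X^+)^g(X^-)^g \;=\; \prod_{m=0}^{g-1}[H_1^+-m]\,[H_{-1}^++m], \qquad (X^-)^g(X^+)^g \;=\; \prod_{m=0}^{g-1}[H_1^-+m]\,[H_{-1}^--m], \]
so that $[(X^+)^g,(X^-)^g]$ is the difference of these two functions of $H$ and $C$.

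Next I would pass to the quotient, where $Q=\varepsilon$. The crucial input is property \eqref{eq_valp}: $\{a\}=P(\varepsilon^a)$ equals $1$ when $g\mid a$ and $0$ otherwise — a dichotomy insensitive to the sign ambiguity in $\varepsilon^{\sqrt C}$ (only $Q^{\sqrt C}+Q^{-\sqrt C}$ is a generator of $U_{\mathcal{A},h'}(\mathfrak{sl}_2,g)$), since $P$ is invariant under $u\mapsto -u$. In each of the four products of $g$ consecutive factors above, the arguments run through a complete set of residues modulo $g$; hence exactly one factor is \emph{$T$-active} — the one whose argument is divisible by $g$, contributing a quantum number of the shape $\big((\varepsilon T)^{gc}-(\varepsilon T)^{-gc}\big)\big/\big(\varepsilon T-\varepsilon^{-1}T^{-1}\big)$ — while the remaining $g-1$ factors are \emph{classical} numbers $[\,\cdot\,]_\varepsilon$. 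By the anti-periodicity $[a+g]_\varepsilon=-[a]_\varepsilon$ (coming from $\varepsilon^g=-1$) and the fact that the nonzero residues are each hit once, the product of these $g-1$ classical factors equals $\pm\,[g-1]^!_\varepsilon$. Thus each of $(X^+)^g(X^-)^g$ and $(X^-)^g(X^+)^g$ collapses to $([g-1]^!_\varepsilon)^2$ times a sign times a product of two $T$-active quantum numbers, all over $(\varepsilon T-\varepsilon^{-1}T^{-1})^2$.

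Finally I would assemble the difference using the remaining quotient relations $Q^{2H}=1$ and $Q^{2\sqrt C}+Q^{-2\sqrt C}=\varepsilon^2+\varepsilon^{-2}$, that is $H\equiv 0$ and $\sqrt C\equiv\pm 1\pmod g$. Since $H_1^\pm+H_{-1}^\pm=\sqrt C$, the two $T$-active exponents occurring in a given product sum to a multiple of $g$ controlled by $\sqrt C$, while their difference is controlled by $H$; combined with the two congruences, this forces the two products to differ, after subtraction, by exactly $(T^g-T^{-g})(T^H-T^{-H})$. Multiplying through by $(\varepsilon T-\varepsilon^{-1}T^{-1})^2$ then yields \eqref{eqlemfond}. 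The hard part will be precisely this last bookkeeping: tracking the signs produced by $\varepsilon^g=-1$ and by the anti-periodicity, and verifying that the residual ambiguities in $\varepsilon^{\sqrt C}$ and $\varepsilon^H$ (only whose squares are fixed by the relations) cancel in the difference, so that the $T$-active exponents recombine cleanly into $g$ and $H$. This is the genuinely delicate, and most technical, step of the argument.
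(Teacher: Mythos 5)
Your first step is correct and is exactly the paper's starting point: iterating \eqref{eq_defx} gives the product formulas (the paper's \eqref{eq_defxg}), and your description of the endgame (one $T$-active factor per run of $g$, the classical factors producing $[g-1]^!_{\varepsilon}$, the $\sqrt{C}$-dependent terms cancelling upon subtraction) is exactly what the paper's computations \eqref{eq_final1}, \eqref{eq_final2} and the identities $(\ast_{\pm})$ deliver. The gap is in the step you defer as ``the hard part'': it is not sign bookkeeping, it is the entire content of the lemma, and your plan for it fails as stated. A single factor such as $[H_1^+ - m]_{Q T^{\{H_1^+ - m\}}}$ involves $Q^{H_1^+}$ and $T^{H_1^+}$, i.e.\ half-integer powers of $Q^{\sqrt{C}}, Q^{H}$ and odd powers of $\sqrt{C}$; it is not an element of $U_{\Aq,h'}(\mathfrak{sl}_2,g)$, nor of any subalgebra on which the quotient relations are imposed, so ``specializing it at $Q=\varepsilon$'' is undefined. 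Worse, the relations fix only $Q^{2H}=1$ and $Q^{2\sqrt{C}}+Q^{-2\sqrt{C}}=\varepsilon^{2}+\varepsilon^{-2}$, leaving $\varepsilon^{2\sqrt{C}} \in \{\varepsilon^{2},\varepsilon^{-2}\}$ and $\varepsilon^{H} \in \{\pm 1\}$ undetermined, and the two choices of $\varepsilon^{2\sqrt{C}}$ move the $T$-active factor of the $e=1$ run of $(X^+)^g(X^-)^g$ from $k=0$ to $k=g-1$: ``which factor is active'' is simply not well defined. The invariance of $P$ under $u \mapsto \pm u^{\pm 1}$ that you invoke does not repair this (it only makes suitably symmetrized products well defined), and, contrary to your last paragraph, nothing is resolved ``in the difference'': each of $(X^+)^g(X^-)^g$ and $(X^-)^g(X^+)^g$ separately has a well-defined image in the quotient, still containing the terms $T^{\pm(\sqrt{C}+g-1)}$, and it is only these $\sqrt{C}$-terms --- not the sign ambiguity --- that cancel upon subtraction.

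What your outline is missing is precisely the apparatus the paper builds to make the specialization legitimate: (i) clear denominators, replacing the ratio expression by the polynomial identity \eqref{eq_defax} between elements that genuinely belong to $U_{\Aq,h'}(\mathfrak{sl}_2,g)$, so that the identity survives the quotient; (ii) check that the products over $e=\pm 1$ of numerators and denominators, $\hat{\Pi}_k$ and $\check{\Pi}_k$, are invariant under the sign and inversion substitutions on $Q^{\sqrt{C}}$ and $Q^{H}$, hence lie in $\Aq[H,C,Q^{\pm 2H},Q^{2\sqrt{C}}+Q^{-2\sqrt{C}}][[h']]$, the subalgebra on which the quotient relations actually act; (iii) identify that quotient explicitly (the isomorphism defining $\mathcal{M}$), which amounts to the consistent assignment $Q^{H_e^a}=\varepsilon^{(1-ae)/2}$ and is what finally gives your ``exactly one active factor'' argument a meaning, yielding \eqref{eq_final1} and \eqref{eq_final2}. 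With (i)--(iii) in place the remainder is indeed the short computation you describe; without them, the proposal reproduces the answer but not a proof.
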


\begin{proof}
Dans $\Uhh$, la relation \eqref{eq_defx} implique l'égalité
\begin{equation*}
(X^\pm)^g (X^\mp)^g \ = \ \prod_{k=0}^{g-1} \ \prod_{e=1,-1} \frac{\xxn{e}{\pm}{\mp ek}{}}{\xxd{e}{\pm}{\mp ek}{}} \, .
\end{equation*}
Pour $0 \leq k \leq g-1$, on peut considérer les éléments
\begin{multline} \label{eq_defxg}
\check{\Pi}_k \ := \ \prod_{e=1,-1} \ \Bigg[ \xxd{e}{\pm}{\mp ek}{} \Bigg] \\
\text{et } \quad \hat{\Pi}_k \ := \ \prod_{e=1,-1} \ \Bigg[ \xxn{e}{\pm}{\mp ek}{} \Bigg]
\end{multline}
dans l'algèbre $\mathcal{A}[H,\sqrt{C}, Q^{\pm H}, Q^{\pm \sqrt{C}}] [[h']]$. On remarque alors qu'ils invariants par la transformation $Q^{\sqrt{C}} \mapsto Q^{- \sqrt{C}}$. Par conséquent Ils appartiennent en fait à la sous-algèbre $\mathcal{A}[H,\sqrt{C}, Q^{\pm H}, Q^{\sqrt{C}} + Q^{-\sqrt{C}}] [[h']]$. En particulier, leurs images dans $\Uhh$ sont des éléments de $U_{\mathcal{A},h'} (\mathfrak{sl}_2,g)$. \\

La relation \eqref{eq_defxg} implique dans $U_{\mathcal{A},h'} (\mathfrak{sl}_2,g)$ la relation
\begin{multline} \label{eq_defax}
\left( \prod_{k=0}^{g-1} \ \prod_{e = 1, -1} \left[ \xxd{e}{\pm}{\mp ek}{} \right] \right) (X^\pm)^g (X^\mp)^g \\
= \ \prod_{k=0}^{g-1} \ \prod_{e = 1, -1} \left[ \xxn{e}{\pm}{\mp ek}{} \right] \, . \quad \quad
\end{multline}

Pour $0 \leq k \leq g-1$, les éléments $\check{\Pi}_k$ et $\hat{\Pi}_k$ sont invariants par la transformation $Q^H \mapsto -Q^H$, ainsi que par la transformation $Q^{\sqrt{C}} \mapsto -Q^{\sqrt{C}}$. Ils appartiennent donc à la sous-algèbre $\mathcal{A}[H,C,Q^{\pm 2H}, Q^{2\sqrt{C}} + Q^{-2\sqrt{C}}] [[h']]$.

Par ailleurs le morphisme de $\mathcal{A}$-algèbre
$$ \mathcal{A}[Q^{\pm 2H}, Q^{2\sqrt{C}} + Q^{-2\sqrt{C}}] \ \longrightarrow \ \mathcal{A}[Q^{\pm H_e^a}]_{e \in \{ -1, 1 \}, \, a \in \{-, + \} } $$
défini par
$$ Q^{\pm 2H} \ \mapsto \ Q^{2 H_1^+} Q^{-2H_{-1}^-} \quad \text{ et } \quad Q^{2 \sqrt{C}} + Q^{-2 \sqrt{C}} \ \mapsto \ Q^{2H_1^+} Q^{2 H_{-1}^+} + Q^{-2H_1^+} Q^{-2 H_{-1}^+} $$
induit un isomorphisme
\begin{multline*} \mathcal{A} \ \simeq \ \mathcal{A}[Q^{\pm 2H}, Q^{2\sqrt{C}} + Q^{-2\sqrt{C}}] \ / \ \big(Q^{2H} = 1, Q^{2\sqrt{C}} + Q^{-2\sqrt{C}} = \varepsilon^2 + \varepsilon^{-2} \big) \\
\stackrel{\sim}{\longrightarrow} \quad \mathcal{A}[Q^{\pm H_e^a}]_{e \in \{ -1, 1 \}, \, a \in \{-, + \} } \ / \ \big( Q^{H_e^a} = \varepsilon^{(1 - a \: \! e) / 2} \big) \ \simeq \ \mathcal{A} \, ,
\end{multline*}
et par suite un isomorphisme de $\CC[H,C][[h']]$-algèbre
\begin{equation*}
\mathcal{M} \ := \ \left( \frac{\mathcal{A} \Big[ H,C,Q^{\pm 2H}, Q^{2\sqrt{C}} + Q^{-2\sqrt{C}} \Big]}{\Big( Q=\varepsilon, \, Q^{2H} = 1, \, Q^{2\sqrt{C}} + Q^{-2\sqrt{C}} = \varepsilon^2 + \varepsilon^{-2} \Big)} \right) [[h']] \ \stackrel{\sim}{\longrightarrow} \ \left( \frac{\mathcal{A} \Big[ H,C,Q^{\pm H_e^a} \Big]_{e \in \{ -1, 1 \}, \, a \in \{-, + \} }}{\Big( Q = \varepsilon, \, Q^{H_e^a} = \varepsilon^{(1-a \: \! e)/2} \Big)} \right) [[h']] \, .
\end{equation*}

Par conséquent, dans $\mathcal{M}$, on a
\begin{equation} \label{eq_final1}
\prod_{k=0}^{g-1} \ \prod_{e=1,-1} \ \Bigg[ \xxd{e}{\pm}{\mp ek}{} \Bigg] \ = \ (\varepsilon - \varepsilon^{-1})^{2g - 2} \, \big( \varepsilon \, T - \varepsilon^{-1} \: \! T^{-1} \big)^2
\end{equation}

\begin{multline} \label{eq_final2}
\text{et} \quad \prod_{k=0}^{g-1} \ \prod_{e=1,-1} \Bigg[ \xxn{e}{\pm}{\mp ek}{} \Bigg] \\
= \ \Bigg[ \prod_{k=1}^{g-1} \left( \varepsilon^k - \varepsilon^{-k} \right)^2 \Bigg] \prod_{e=1,-1} \Big( T^{\: \! H_e^{\pm} + (g -1 \mp eg \pm e)/2} - T^{\: \! - H_e^{\pm} - (g -1 \mp eg \pm e)/2} \Big) \\
= \ \Bigg[ \prod_{k=1}^{g-1} \left( \varepsilon^k - \varepsilon^{-k} \right)^2 \Bigg]  \Big( T^{\: \! \sqrt{C} +g -1} \ + \ T^{\: \! - \sqrt{C} -g +1} \ - \ T^{\: \!  \mp g} \, T^{\: \! H} \ - \ T^{\: \! \pm g} \, T^{\: \! -H} \Big) \, .
\end{multline}

Dans le quotient $U_{\! A, h'} (\mathfrak{sl}_2, g) \ / \ \overline{ \big( Q = \varepsilon, \, Q^{2H} = 1, \, Q^{2 \sqrt{C}} + Q^{-2 \sqrt{C}} = \varepsilon^{2} + \varepsilon^{-2} \big) }^{\, h'}$, les relations \eqref{eq_final1}  et \eqref{eq_final2} impliquent l'égalité
\begin{multline} \tag{$\ast_{\pm}$}
(\varepsilon - \varepsilon^{-1})^{2g - 2} \, \big( \varepsilon \, T - \varepsilon^{-1} \: \! T^{-1} \big)^2 \, (X^{\pm})^g (X^{\mp})^g \\
= \ \Bigg[ \prod_{k=1}^{g-1} \left( \varepsilon^k - \varepsilon^{-k} \right)^2 \Bigg]  \Big( T^{\: \! \sqrt{C} +g -1} \ + \ T^{\: \! - \sqrt{C} -g +1} \ - \ T^{\: \!  \mp g} \, T^{\: \! H} \ - \ T^{\: \! \pm g} \, T^{\: \! -H} \Big) \, . \quad \quad \quad
\end{multline}
On obtient \eqref{eqlemfond} en faisant la différence de $(\ast_+)$ et $(\ast_-)$.
\end{proof}

\begin{rem} \label{rem_fond}
D'après le lemme précédent, dans le localisé par rapport à $h'$
$$ \Bigg( \, U_{\mathcal{A}, h'} (\mathfrak{sl}_2, g) \ / \ \overline{ \big( Q = \varepsilon, \, Q^{2H} = 1, \, Q^{2 \sqrt{C}} + Q^{-2 \sqrt{C}} = \varepsilon^{2} + \varepsilon^{-2} \big) }^{\, h'} \, \Bigg)_{\! \! h'} \, , $$
les éléments
$$ {}^L \! X^+ \ := \ ({[g-1]}_{\varepsilon}^!)^{-2} \, \big( \varepsilon \, T -  \varepsilon^{-1} T^{-1} \big)^2 \,  \big( T^g - T^{-g} \big)^{-2} \, \, (X^+)^g \, , $$
$$  {}^L \! X^- \ := (X^-)^g \, , \quad \ \text{ et } \quad {}^L \! H \ := \ \frac{H}{g} $$
vérifient
$$ [ {}^L \! H, {}^L \! X^{\pm}] \ = \ \pm 2 \,  {}^L \! X^{\pm} \, , \quad  [  {}^L \! X^+ ,  {}^L \! X^- ] \ = \ \frac{\text{sinh} (g h' \: \!  {}^L \! H)}{\text{sinh} (g h')} \, . $$
Autrement dit, la sous-$\CC[[h']]$-algèbre engendrée par ${}^L \! X^{\pm}$ et ${}^L \! H$, que nous noterons $\langle {}^L \! X^{\pm}, {}^L \! H \rangle$, est un quotient de $U_{gh'}(\mathfrak{sl}_2)$. \\
Nous verrons, après avoir établi la dualité de Langlands pour les représentations des groupes quantiques de rang 1, que $\langle {}^L \! X^{\pm}, {}^L \! H \rangle$ est en fait isomorphe à $U_{gh'}(\mathfrak{sl}_2)$ : voir le théorème \ref{thm_uhhinterpolation}.
\end{rem}

\section{Théorie des Représentations des Groupes Quantiques d'Interpolation} \label{section_repr}
Dans cette section, on étudie la théorie des représentations de $\Uhh$. On commence, en utilisant la décomposition triangulaire de $\Uhh$ (proposition \ref{prop_triang}), par construire des modules de Verma de $\Uhh$, déformations selon $h'$ pour chaque $g \in \NN_{\geq 1}$, des modules de Verma de $\Uh$ correspondants. Comme pour $\mathfrak{sl}_2$ et $\Uh$, on peut en donner une description explicite. Voir la proposition \ref{prop_vermadescr}. \\
Pour chaque $g \in \NN_{\geq 1}$, on définit ensuite une déformation selon $h'$ des représentations de $\Uh$ de rang fini $L^h(n)$ ($n \in \NN$). Voir la proposition \ref{prop_indecdescr}. Comme pour $\mathfrak{sl}_2$ et $\Uh$, cette déformation peut-être réalisée comme quotient du module de Verma de $\Uhh$. \\
On utilise ensuite les outils de la section \ref{section_deformations} pour décrire précisément la catégorie des représentations de rang fini de $\Uhh$ : voir le théorème \ref{thm_rephh}. On obtient notamment que toute représentation de $\mathfrak{sl}_2$ de dimension finie peut-être déformée en une représentation de $\Uhh$, que la catégorie possède la propriété de Krull-Schmidt (toute représentation est, d'une manière unique, somme directe de représentations indécomposables), et que ses seules représentations indécomposables sont les déformations de $L^h(n)$ ($n \in \NN$) mentionnées précédemment. \\
Ces résultats sur la catégorie des représentations de rang fini des groupes quantiques d'interpolation de Langlands de rang 1 sont nouveaux. Dans \cite{hernandez}, pour $g=2,3$ des modules de Verma sont définis, ainsi que des déformations des représentations irréductibles de dimension finie $L^q(n)$ de $U_q(\mathfrak{sl}_2)$. Toutefois une étude systématique de la théorie des représentations de dimension finie des groupes quantiques d'interpolation de rang 1 n'est pas faite. Celle-ci semble difficilement réalisable du point de vue de \cite{hernandez}, du fait notamment du nombre trop important de générateurs des groupes quantiques d'interpolation. \\

Soit $g \in \NN_{\geq 1}$. Le groupe quantique d'interpolation $\Uhh$ est, d'après le théorème \ref{thm_uhhdefor}, une déformation de $U(\mathfrak{sl}_2)$. En reprenant les définitions de la section \ref{section_deformations}, on rappelle qu'une représentation $V^{h,h'}$ de $\Uhh$ est donnée par un $\CC$-espace vectoriel $V$ et par une action de $\Uhh$ sur $V[[h,h']]$. \\

La représentation $V^{h,h'}$ est une représentation de poids si le $\CC[[h,h']]$-module $V^{h,h'}$ admet une décomposition en somme directe :
\begin{equation}
V^{h,h'} \ = \ \bigoplus_{n \in \ZZ} \, V^{h,h'}_n \, , \quad \text{ avec } \ V^{h,h'}_n \ := \ \{ f \in V^{h,h'} : \, H.f = n \: \! f \} \, .
\end{equation}
Remarquons que $V^{h,h'}_n$ ($n \in \ZZ$) est un sous-$\CC[[h,h']]$-module fermé de $V^{h,h'}$ (en effet, $H$ est $\CC[[h,h']]$-linéaire, donc continu). \\

Un élément $f \in V^{h,h'}_n$ est appelé un élément de poids $n$, et si $X^+.f = 0$, $f$ est appelé un élément de plus haut poids $n$. \\
Si $V^{h,h'}_n \neq (0)$, alors $n$ est appelé un poids de $V^{h,h'}$ et $V^{h,h'}_n$ un espace de poids de $V^{h,h'}$. On notera $\text{wt} \: \! V^{h,h'}$ l'ensemble des poids de $V^{h,h'}$. \\

La catégorie additive $\CC[[h,h']]$-linéaire des représentations de $\Uhh$ de rang fini sera notée $\cmodhh$. \\

On pose $U := \Uc$. On rappelle que le module de Verma $M(n)$ de $\Uc$ ($n \in \ZZ$) est donné par :
$$ M(n) \ := \ U \ / \, \Big( U \: \! \bar{X}^+ + \ U \: \! (\bar{H} - n \: \! 1) \Big) \, . $$
En notant $m_0 \in M(n)$ l'image de $1 \in U$, et $m_j := (\bar{X}^-)^j. m_0$ ($j \in \NN)$, on obtient une base $(m_j)_{j \in \NN}$ de $M(n)$ telle que
\begin{eqnarray*}
H.m_j & = & (n - 2j) \, m_j \, , \\
X^- . m_j & = & m_{j+1} \, , \\
X^+ . m_j & = & j \: \! (n -j+1) \, m_{j-1} \, , \quad \quad \text{(on pose $m_{-1} := 0$).} \\
\end{eqnarray*}

On pose $U^h := \Uh$. Le module de Verma $M^h(n)$ de $\Uh$ ($n \in \ZZ$) est donné par :
$$ M^h(n) \ := \ U^h \ / \, \Big( U^h \: \! \tilde{X}^+ + \ U^h \: \! (\tilde{H} - n \: \! 1) \Big) \, . $$

On pose $U^{h,h'} := \Uhh$ et on définit le module de Verma $M^{h,h'} (n,g)$ de $\Uhh$ ($n \in \ZZ$) :
\begin{equation} \label{eq_verma}
M^{h,h'}(n,g) \ := \ U^{h,h'} \ / \, \Big( U^{h,h'} \: \! X^+ + \ U^{h,h'} \: \! (H - n \: \! 1) \Big) \, .
\end{equation}

$\Uhh = U(\mathfrak{sl_2})[[h,h']]$ (on rappelle que cette égalité doit être interprétée en considérant la base donnée dans la proposition \ref{prop_basehh}) est une représentation, appelée régulière, de $\Uhh$ en considérant l'action par multiplication à gauche. C'est une déformation de la représentation régulière de $U(\mathfrak{sl}_2)$. \\

Puisque $X^+ H = H X^+ - 2 \: \! X^+$, on a l'égalité de $\CC[[h,h']]$-module
$$ U^{h,h'} \: \! X^+ + \ U^{h,h'} \: \! (H - n \: \! 1) \ = \ \Big( U \: \! \bar{X}^+ + \ U \: \! (\bar{H} - n \: \! 1) \Big)[[h,h']] \, . $$
Le sous-$\Uhh$-module $U^{h,h'} \: \! X^+ + \ U^{h,h'} \: \! (H - n \: \! 1)$ de la représentation régulière est donc une sous-représentation, et par suite une déformation formelle de la représentation $U \: \! \bar{X}^+ + \ U \: \! (\bar{H} - n \: \! 1)$ de $\Uc$. \\
On en déduit que $M^{h,h'}(n,g) = M(n)[[h,h']]$ est une déformation formelle du module de Verma $M(n)$ de $\Uc$ selon $h$ et $h'$ (remarquons que cela prouve en particulier que $M^{h,h'}(n,g)$ est non trivial). \\

De la même façon, $M^h(n) = M(n)[[h]]$ ($n \in \ZZ$) est une déformation du module de Verma $M(n)$ de $\Uc$ selon $h$ et $M^{h,h'}(n,g)$ est une déformation de $M^h(n)$ selon $h'$. \\

On résume ce qui précède, et on donne une description explicite du module $M^{h,h'}(n,g)$, dans la proposition \ref{prop_vermadescr} suivante. 

\begin{proposition} \label{prop_vermadescr}
Soient $n \in \ZZ$ et $g \in \NN_{\geq 1}$.
\begin{itemize}
\item[1)] $M^{h,h'}(n,g)$ est une représentation de rang infini de $\Uhh$.
\item[2)] $M^{h,h'}(n,g)$ est une déformation formelle du module de Verma $M(n)$ de $\Uc$ selon $h$ et $h'$.
\item[3)] $M^{h,h'}(n,g)$ est une déformation formelle du module de Verma $M^h(n)$ de $\Uh$ selon $h'$.
\item[4)] Le module de Verma $M^{h,h'}(n,g)$ est une représentation de poids.
\item[5)] L'action de $\Uhh$ sur $M^{h,h'}(n,g)$ est donnée dans la base topologique $(m_j)_{j \in \NN}$ par :
\begin{eqnarray*}
H.m_j & = & (n - 2j) \, m_j \, , \\
C. m_j & = & (n + 1)^2 \, m_j \, , \\
X^- . m_j & = & m_{j+1} \, , \\
X^+ . m_j & = & \big[ j \big]_{\! Q \: \! T^{ {\{ j \}}}} \, \big[ n -j+1 \big]_{\! Q \: \! T^{ {\{ n -j+1 \}}}} \, m_{j-1} \, ,
\end{eqnarray*}
\end{itemize}
\end{proposition}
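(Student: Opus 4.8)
Le plan est de remarquer que les points 1) \`a 4) d\'ecoulent pour l'essentiel de la discussion pr\'ec\'edant l'\'enonc\'e, et de concentrer l'effort sur le point 5). L'\'egalit\'e de $\CC[[h,h']]$-modules d\'ej\`a \'etablie
$$ U^{h,h'} X^+ + U^{h,h'} (H - n\,1) \ = \ \big( U\,\bar{X}^+ + U\,(\bar{H} - n\,1) \big)[[h,h']] $$
identifie $M^{h,h'}(n,g)$ au module $M(n)[[h,h']]$; comme $M(n)$ est de dimension infinie, le point 1) suit, les points 2) et 3) ne sont que les assertions de d\'eformation d\'ej\`a justifi\'ees, et le point 4) r\'esultera du calcul de l'action de $H$ ci-dessous. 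La proposition \ref{prop_basehh}, ou de fa\c{c}on \'equivalente la d\'ecomposition triangulaire \eqref{eq_triang}, assure que la famille $(m_j)_{j\in\NN}$, avec $m_j := (X^-)^j.m_0$, est une base topologique de $M^{h,h'}(n,g)$, le vecteur $m_0$ v\'erifiant $X^+.m_0 = 0$ et $H.m_0 = n\,m_0$.

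Pour le point 5), je calculerais les quatre actions dans cette base. Celle de $X^-$ est imm\'ediate par d\'efinition des $m_j$; celle de $H$ s'obtient par r\'ecurrence \`a partir de $H.m_0 = n\,m_0$ et de $[H,X^-] = -2X^-$, donnant $H.m_j = (n-2j)\,m_j$ et \'etablissant au passage le point 4). Pour l'action de $C$, j'invoquerais sa centralit\'e (corollaire \ref{cor_eng}) : $M^{h,h'}(n,g)$ \'etant engendr\'e par $m_0$, l'\'el\'ement $C$ agit par un scalaire $\gamma\in\CC[[h,h']]$. Pour identifier $\gamma$, j'\'evaluerais sur $m_0$ la relation \eqref{eq_defx} donnant $X^-X^+$ : puisque $X^+.m_0 = 0$, le produit $\prod_{e=1,-1}[H_e^-]$, vu comme \'el\'ement de la sous-alg\`ebre $\CC[H,C][[h,h']]$ et \'evalu\'e en $H=n$, $C=\gamma$, doit s'annuler, o\`u $H_e^- = \tfrac{1}{2}(\sqrt{C}+eH+e)$. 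Chaque nombre quantique $[a]_{Q\,T^{\{a\}}}$ ayant pour num\'erateur $2\sinh\!\big((h+h'\{a\})\,a\big)$, il s'annule dans le domaine $\CC[[h,h']]$ si et seulement si $a=0$; l'annulation du produit force donc $\tfrac{1}{2}(\sqrt{\gamma}\pm(n+1))=0$, soit $\gamma=(n+1)^2$. Par centralit\'e, $C.m_j=(n+1)^2\,m_j$ pour tout $j$.

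Resterait l'action de $X^+$, que j'obtiendrais en \'ecrivant $X^+.m_j = X^+X^-.m_{j-1}$ et en appliquant \eqref{eq_defx} sur le vecteur de poids $m_{j-1}$ (de poids $n-2j+2$, avec $C=(n+1)^2$). Les valeurs de $H_e^+ = \tfrac{1}{2}(\sqrt{C}+eH-e)$ sur ce vecteur se calculent directement, donnant $H_1^+ = n-j+1$ et $H_{-1}^+ = j$; le r\'esultat est ind\'ependant du signe choisi pour $\sqrt{C}$, car le produit $\prod_e[H_e^+]$ est invariant par $\sqrt{C}\mapsto-\sqrt{C}$ et chaque nombre quantique est impair en son argument. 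On en d\'eduit
$$ X^+.m_j \ = \ \big[\,j\,\big]_{Q\,T^{\{j\}}}\,\big[\,n-j+1\,\big]_{Q\,T^{\{n-j+1\}}}\,m_{j-1}\,, $$
ce qui ach\`eve le point 5).

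La difficult\'e principale me semble r\'esider dans le traitement soigneux de l'action de $C$ : il faut justifier que l'annulation du produit de nombres quantiques sur le vecteur de plus haut poids force bien la valeur constante $\gamma=(n+1)^2$ (et non une valeur d\'ependant de $h$ et $h'$), en s'appuyant sur l'int\'egrit\'e de $\CC[[h,h']]$ et sur l'\'equivalence $[a]=0\Leftrightarrow a=0$. Une fois ce point acquis et la coh\'erence de signe de $\sqrt{C}$ v\'erifi\'ee, les formules pour $H$ et $X^{\pm}$ s'obtiennent par simple substitution des valeurs des $H_e^{\pm}$ sur les vecteurs de poids.
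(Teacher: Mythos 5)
Your proof is correct and takes essentially the same route as the paper's: points 1)--3) are delegated to the discussion preceding the statement, point 5) is derived from the relations of Definition \ref{def_uhh} together with $m_j = (X^-)^j.m_0$, and point 4) is read off from the action of $H$. You merely make explicit (centrality of $C$ plus vanishing of the quantum numbers on the highest weight vector, then evaluation of the $H_e^{\pm}$ on weight vectors, using invariance under $\sqrt{C} \mapsto -\sqrt{C}$) the computations that the paper's terse proof leaves implicit.
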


\begin{proof}
Les trois premiers points ont déjà été prouvés. \\
On utilise les relations de la définition \ref{def_uhh} de $\Uhh$ et le fait que $m_j = (X^-)^j . m_0$ ($j \in \NN$) pour montrer le dernier point. \\
En examinant l'action de $H$, on obtient que $M^{h,h'}(n,g)$ est une représentation de poids avec $M^{h,h'}_{n-2j}(n,g) = \CC[[h,h']]. m_j$ ($j \in \NN$) et
$$ M^{h,h'}(n,g) \ = \ \bigoplus_{j \in \NN} \, M^{h,h'}_{n - 2j}(n,g) \, . $$
\end{proof}

L'action de $\Uh$ sur $M^h(n)$ ($n \in \ZZ$) est donnée dans la base topologique $(m_j)_{j \in \NN}$ par :
\begin{eqnarray*}
H.m_j & = & (n - 2j) \, m_j \, , \\
X^- . m_j & = & m_{j+1} \, , \\
X^+ . m_j & = & {[j]}_Q \, {[n-j+1]}_Q \, m_{j-1} \, ,
\end{eqnarray*}

Formellement donc, le module de Verma $M^{h,h'}(n,g)$ de $\Uhh$ s'obtient à partir du module de Verma $M^h(n)$ de $U_h(\mathfrak{sl}_2)$, en remplaçant les boîtes quantiques $[a]_Q$ par les boîtes ${[a]}_{Q \: \! T^{\{a \}}}$ ($a \in \ZZ)$.  \\

Les modules de Verma de $\Uhh$ vérifient la propriété universelle suivante. Sa démonstration s'obtient immédiatement à partir de \eqref{eq_verma}.

\begin{proposition} \label{prop_verma}
Si $V^{h,h'}$ est une représentation de $\Uhh$ et $f \in V^{h,h'}$ un élément de plus haut poids $n$, alors il existe un unique morphisme
$$ \varphi \, : \ M^{h,h'}(n,g) \ \to \ V^{h,h'} $$
qui vérifie $\varphi \! \left( m_0 \right) = f$.
\end{proposition}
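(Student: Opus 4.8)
Le plan est d'exploiter directement la présentation \eqref{eq_verma} de $M^{h,h'}(n,g)$ comme quotient de la représentation régulière $U^{h,h'}$ par l'idéal à gauche $I := U^{h,h'} \: \! X^+ + U^{h,h'} \: \! (H - n \: \! 1)$. Je commencerais par considérer l'application $\CC[[h,h']]$-linéaire continue $\Phi : U^{h,h'} \to V^{h,h'}$ définie par $\Phi(u) = u.f$. C'est un morphisme de $\Uhh$-modules (de la représentation régulière vers $V^{h,h'}$), et il envoie $1$ sur $f$.

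L'étape clé sera de vérifier que $I \subseteq \ker \Phi$. Comme $f$ est de plus haut poids $n$, on a $X^+.f = 0$ et $(H - n \: \! 1).f = 0$ ; par suite, pour tout $u \in U^{h,h'}$, on obtient $\Phi(u \: \! X^+) = u.(X^+.f) = 0$ et $\Phi \big( u \: \! (H - n \: \! 1) \big) = u. \big( (H - n \: \! 1).f \big) = 0$. L'idéal $I$ étant engendré par $X^+$ et $H - n \: \! 1$, ceci montre $I \subseteq \ker \Phi$. On a vu par ailleurs que $I = \Big( U \: \! \bar{X}^+ + U \: \! (\bar{H} - n \: \! 1) \Big)[[h,h']]$ est fermé, de sorte qu'aucun passage à l'adhérence n'est requis ; sinon, la continuité de $\Phi$ jointe au caractère séparé de $V^{h,h'}$ suffirait à conclure $\overline{I}^{\: \! h,h'} \subseteq \ker \Phi$.

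Par la propriété universelle du quotient, $\Phi$ se factorise alors en un unique morphisme $\CC[[h,h']]$-linéaire $\varphi : M^{h,h'}(n,g) \to V^{h,h'}$, automatiquement $\Uhh$-linéaire, et vérifiant $\varphi(m_0) = \Phi(1) = f$. Pour l'unicité, je remarquerais que $m_0$ engendre topologiquement $M^{h,h'}(n,g)$ comme $\Uhh$-module : l'image de $1$ par la surjection canonique $U^{h,h'} \to M^{h,h'}(n,g)$ est précisément $m_0$, et la base topologique $(m_j)_{j \in \NN}$ s'écrit $m_j = (X^-)^j.m_0$. Un morphisme de $\Uhh$-modules continu est donc entièrement déterminé par la valeur $\varphi(m_0)$, ce qui force l'unicité.

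À vrai dire, aucun obstacle sérieux n'est attendu : l'énoncé est la propriété universelle standard des modules de Verma et découle formellement de \eqref{eq_verma}. Le seul point méritant attention est de nature topologique — s'assurer que la factorisation reste compatible avec la topologie $\bar h$-adique et avec la structure de $\CC[[h,h']]$-module — mais il est réglé par la continuité de $\Phi$ et par le caractère séparé et complet des modules en jeu.
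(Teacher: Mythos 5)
Votre preuve est correcte et suit exactement la voie que le papier a en vue : celui-ci se contente d'affirmer que la d\'emonstration \guillemotleft{} s'obtient imm\'ediatement \`a partir de \eqref{eq_verma} \guillemotright, et vous explicitez pr\'ecis\'ement cet argument standard (factorisation de $u \mapsto u.f$ par le quotient, le point topologique \'etant r\'egl\'e par le fait que l'id\'eal \`a gauche est de la forme $\big( U \: \! \bar{X}^+ + U \: \! (\bar{H} - n \: \! 1) \big)[[h,h']]$, donc ferm\'e). Rien \`a redire.
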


On rappelle que la représentation irréductible de dimension finie $L(n)$ de $\Uc$ est obtenue comme quotient de $M(n)$ par la sous-représentation $\bigoplus_{j \geq n+1} \CC \, m_j$. \\
$L(n)$ admet alors comme base la famille des vecteurs $m_j$ ($0 \leq j \leq n$) et l'action de $\Uc$ sur $L(n)$ est donnée par
\begin{eqnarray*}
H.m_j & = & (n - 2j) \, m_j \, , \\
X^- . m_j & = & m_{j+1} \, , \\
X^+ . m_j & = & j \: \! (n -j+1) \, m_{j-1} \, ,
\end{eqnarray*}
(on pose $m_{-1} = m_{n+1} := 0$). \\

Une sous-représentation $V[[h,h']]$ de $M^{h,h'}(n,g) = M(n)[[h,h']]$ vérifie en particulier $H.V \subset V$. L'action de $H$ sur $M(n)$ étant localement finie et semi-simple, elle l'est aussi sur $V$. Par suite $V$ admet comme base une sous-famille de $(m_j)_{j \in \NN}$. \\

D'après le point 5) de la proposition \ref{prop_vermadescr}, $M^{h,h'}(n,g)$ admet une sous-représentation non triviale si et seulement si $n \in \NN$. Dans ce dernier cas, il existe une unique sous-représentation non triviale : $\left( \bigoplus_{j \geq n+1} \CC \, m_j \right) [[h,h']]$ qui est isomorphe à $M^{h,h'}(-n-2)$ d'après la proposition \ref{prop_verma}. \\
On note $L^{h,h'}(n,g)$ la représentation quotient, de rang fini égal à $n+1$. \\
On définit de la même manière la représentation de rang fini $L^h(n)$ de $\Uh$. \\

Par des arguments similaires à ceux donnés dans le cas des modules de Verma, on vérifie que $L^{h,h'}(n,g)$ est une déformation selon $h$ et $h'$ de la représentation irréductible $L(n)$ de $U(\mathfrak{sl}_2)$, et une déformation selon $h'$ de la représentation $L^h(n)$ de $\Uh$. \\

On résume ce qui précède, et on donne une description explicite de $L^{h,h'}(n,g)$, dans la proposition suivante.

\begin{proposition} \label{prop_indecdescr}
Soient $n \in \NN$ et $g \in \NN_{\geq 1}$.
\begin{itemize}
\item[1)] $L^{h,h'}(n,g)$ est une représentation de $\Uhh$ de rang fini égal à $n+1$.
\item[2)] $L^{h,h'}(n,g)$ est une déformation formelle de la représentation irréductible $L(n)$ de $\Uc$ selon $h$ et $h'$.
\item[3)] $L^{h,h'}(n,g)$ est une déformation formelle de la représentation $L^h(n)$ de $\Uh$ selon $h'$.
\item[4)] $L^{h,h'}(n,g)$ est une représentation de poids.
\item[5)] L'action de $\Uhh$ sur $L^{h,h'}(n,g)$ est donnée dans la base topologique $(m_j)_{0 \leq j \leq n}$ par :
\begin{eqnarray*}
H.m_j & = & (n - 2j) \, m_j \, , \\
C. m_j & = & (n + 1)^2 \, m_j \, , \\
X^- . m_j & = & m_{j+1} \, , \\
X^+ . m_j & = & \big[ j \big]_{\! Q \: \! T^{ {\{ j \}}}} \, \big[ n -j+1 \big]_{\! Q \: \! T^{ {\{ n -j+1 \}}}} \, m_{j-1} \, ,
\end{eqnarray*}
\end{itemize}
\end{proposition}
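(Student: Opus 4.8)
The plan is to deduce all five assertions from the corresponding facts about the Verma module $M^{h,h'}(n,g)$, established in Proposition \ref{prop_vermadescr}, together with the deformation-theoretic machinery of Section \ref{section_deformations}, by passing to the quotient. Recall that the preceding discussion realizes $L^{h,h'}(n,g)$ as the quotient of $M^{h,h'}(n,g)$ by its unique nontrivial subrepresentation $\big( \bigoplus_{j \geq n+1} \CC \, m_j \big)[[h,h']]$. Since the quotient of a representation $V[[h,h']]$ by a subrepresentation of the form $W[[h,h']]$ is $(V/W)[[h,h']]$, this gives, as $\CC[[h,h']]$-modules,
$$ L^{h,h'}(n,g) \ = \ \Big( M(n) \Big/ \bigoplus_{j \geq n+1} \CC \, m_j \Big)[[h,h']] \ = \ L(n)[[h,h']], $$
which is free of rank $n+1$ over $\CC[[h,h']]$. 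This proves point 1.

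For points 2 and 3 I would use that the classical limit functors $\lim_{h,h' \to 0}$ and $\lim_{h' \to 0}$ commute with the quotient by a subrepresentation of the form $W[[h,h']]$. As $M^{h,h'}(n,g)$ is a deformation of $M(n)$ along $h,h'$ and of $M^h(n)$ along $h'$ (points 2 and 3 of Proposition \ref{prop_vermadescr}), the induced action on the quotient $L(n)[[h,h']]$ therefore deforms the $\Uc$-action on $L(n)$, respectively the $\Uh$-action on $L^h(n)$. Concretely this reduces to the two elementary limits: setting $h=h'=0$ (so $Q=T=1$) sends $[a]_{QT^{\{a\}}}$ to $a$, and setting $h'=0$ (so $T=1$ and $T^{\{a\}}=1$) sends $[a]_{QT^{\{a\}}}$ to $[a]_Q$. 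Point 5 is then obtained by transporting the four formulas of point 5 of Proposition \ref{prop_vermadescr} across the projection, with the convention $m_{-1} = m_{n+1} := 0$; the $C$-action $C.m_j = (n+1)^2 m_j$ descends identically. Finally, point 4 is immediate from the $H$-action in point 5: the decomposition $L^{h,h'}(n,g) = \bigoplus_{j=0}^{n} \CC[[h,h']] \, m_j$ into the eigenspaces $H.m_j = (n-2j)\,m_j$ exhibits $L^{h,h'}(n,g)$ as a weight representation.

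I do not expect a serious obstacle, since the substantive computation has already been carried out for the Verma modules; the proof is essentially a matter of transporting those statements across the quotient map. The one genuinely verifiable step is that the formulas are well defined on the quotient, i.e. that $\bigoplus_{j \geq n+1} \CC \, m_j$ is stable under $X^+$. This holds because the relevant quantum box vanishes, $[0]_{QT^{\{0\}}} = 0$, so that $X^+.m_{n+1} = [n+1]_{QT^{\{n+1\}}} \, [0]_{QT^{\{0\}}} \, m_n = 0$; this is the boundary condition making the quotient construction consistent, and it is a one-line computation.
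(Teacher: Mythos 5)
Your proposal is correct and follows essentially the same route as the paper: the paper likewise realizes $L^{h,h'}(n,g)$ as the quotient of $M^{h,h'}(n,g)$ by the subrepresentation $\big(\bigoplus_{j \geq n+1} \CC\, m_j\big)[[h,h']]$, deduces points 1--3 from the discussion preceding the proposition (mirroring the Verma-module arguments), obtains point 5 by transporting point 5 of Proposition \ref{prop_vermadescr} to the quotient, and reads off point 4 from the $H$-action. Your explicit verification that $X^+.m_{n+1} = [n+1]_{QT^{\{n+1\}}}\,[0]_{QT^{\{0\}}}\, m_n = 0$ is exactly the boundary fact the paper uses (there phrased via the universal property of Verma modules, identifying the subrepresentation with $M^{h,h'}(-n-2)$), so nothing is missing.
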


\begin{proof}
Les trois premiers points ont déjà été prouvés. \\
Le dernier point est une conséquence du point 5) de la proposition \ref{prop_vermadescr}. \\
En examinant l'action de $H$, on obtient que $L^{h,h'}(n,g)$ est une représentation de poids avec $L^{h,h'}_{n-2j}(n,g) = \CC[[h,h']]. m_j$ ($0 \leq j \leq n$) et
$$ L^{h,h'}(n,g) \ = \ \bigoplus_{0 \leq j \leq n} \, L^{h,h'}_{n - 2j}(n,g) \, . $$
\end{proof}

L'action de $\Uh$ sur la représentation $L^h(n)$ ($n \in \NN$) est donnée par :
\begin{eqnarray*}
H.m_j & = & (n-2j) \, m_j \, , \\
X^- . m_j & = & m_{j+1} \, , \\
X^+ . m_j & = & [j \big]_Q \: \!  [n -j+1]_Q \, m_{j-1} \, .
\end{eqnarray*}

Ici encore, il suffit de remplacer, comme expliqué plus haut, les boîtes quantiques de la représentation $L^h(n)$ de $U_h(\mathfrak{sl}_2)$ pour obtenir la représentation $L^{h,h'}(n,g)$ de $U_{h,h'}(\mathfrak{sl}_2)$.

\begin{rem}
Pour $g= 1,2,3$, des analogues des représentations $M^{h,h'}(n,g)$ et $L^{h,h'}(n,g)$ existent dans \cite{hernandez}. Toutefois celui de $M^{h,h'}(n,g)$ n'apparaît pas comme un module de Verma (les groupes quantiques d'interpolation de Langlands dans \cite{hernandez} n'admettent a priori pas de décomposition triangulaire), quotient de la représentation régulière.
\end{rem}

On note $\cmod$ la catégorie abélienne des représentations de dimension finie de l'algèbre de Lie $\mathfrak{sl}_2$. On rappelle que $\cmod$ est une catégorie semi-simple : voir par exemple \cite[II.6]{humphreys}. Semi-simple ici signifie que toute représentation de dimension finie de $\mathfrak{sl}_2$ est une somme directe finie de représentations irréductibles. \\

D'après la section \ref{section_deformations} sur les déformations formelles, on dispose du foncteur additif : 
$$ \lim_{h,h' \to 0} : \ \cmodhh \, \longrightarrow \ \cmod \, . $$

On note $\grot$ et $\grothh$ les groupes de Grothendieck des catégories additives respectives $\cmod$ et $\cmodhh$. \\
On dira qu'une catégorie additive $\mathcal C$ possède la propriété de Krull-Schmidt si chaque objet dans $\mathcal C$ est une somme directe d'objets indécomposables, les facteurs composant la somme étant uniques à isomorphisme et ordre près. \\

Le théorème suivant donne plusieurs résultats nouveaux sur la catégorie des représentations de rang fini de $\Uhh$. Ceux-ci n'ont pas d'analogues dans \cite{hernandez}.

\begin{theorem} \label{thm_rephh}
\begin{itemize}
\item[1)] Toute représentation $V \in \cmodh$ admet une déformation $V^{h,h'}$ dans $\cmodhh$.
\item[2)] Deux représentations dans $\cmodhh$ sont isomorphes si et seulement si leurs limites classiques le sont.
\item[3)] Le foncteur $\lim_{h,h' \to 0} : \cmodhh \to \cmod$ induit un isomorphisme additif
$$ \left[ \lim_{h,h' \to 0} \right] : \ \grothh \, \stackrel{\sim}{\longrightarrow} \ \grot $$
du groupe de Grothendieck $\grothh$ sur le groupe de Grothendieck $\grot$.
\item[4)] La catégorie $\cmodhh$ possède la propriété de Krull-Schmidt.
\item[5)] Pour tout $n \in \NN$, la représentation de rang fini $L^{h,h'}(n,g)$ est indécomposable. Pour toute représentation $V^{h,h'}$ de $\Uhh$ de rang fini indécomposable, il existe un unique $n \in \NN$ tel que $V^{h,h'}$ est isomorphe à $L^{h,h'}(n,g)$.
\item[6)] Toute représentation dans $\cmodhh$ est une représentation de poids.
\end{itemize}
\end{theorem}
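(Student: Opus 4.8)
The plan is to reduce the entire statement to the semisimplicity of $\cmod$ together with a rigidity principle supplied by the deformation-theoretic machinery of Section~\ref{section_deformations}. First I would use the isomorphism $\psi$ of Proposition~\ref{prop_uconst} to transport every object of $\cmodhh$ to a representation of the \emph{constant} formal deformation $\Uc[[h,h']]$ of $\Uc$: since $\psi$ is an isomorphism of $\CC[[h,h']]$-algebras reducing to the identity of $\Uc$ modulo $(h,h')$ (see \eqref{eq_psimod}), pullback along $\psi$ is an equivalence of $\CC[[h,h']]$-linear categories that intertwines the two classical-limit functors. Thus it suffices to establish the analogous statements for finite-rank representations of the constant deformation of $\Uc$, where $H$ acts through $\bar H$.

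The heart of the argument is the following rigidity statement: every finite-rank representation $(V,\pi_{\bar h})$ of the constant deformation of $\Uc$ is a \emph{trivial} deformation of its classical limit $(V,\pi_0)$. To see this I would invoke Theorem~\ref{thm_hochrep}, whose hypothesis is the vanishing $H^1_{\CC}(\Uc,\text{End}_{\CC}V)=0$. For $V$ finite-dimensional this vanishing holds because the Hochschild cohomology of the enveloping algebra computes the Lie-algebra cohomology $H^1(\mathfrak{sl}_2,\text{End}_{\CC}V)$ of the finite-dimensional adjoint module $\text{End}_{\CC}V$, which is zero by Whitehead's first lemma (equivalently, by the semisimplicity of $\cmod$). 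Consequently $(V,\pi_{\bar h})$ is equivalent to the constant deformation of $(V,\pi_0)$, and every object of $\cmodhh$ is, up to isomorphism, the image under $\psi^{-1}$ of the constant deformation of a finite-dimensional $\mathfrak{sl}_2$-module.

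With this dictionary in hand the six points become bookkeeping over the semisimple category $\cmod$. For (6), the constant deformation of a weight module is again a weight module and $\psi(H)=\bar H$, so every object is a weight representation, the weight spaces being those of Proposition~\ref{prop_indecdescr}(4). For (2), rigidity shows each object is determined up to isomorphism by its classical limit, while $\lim_{h,h'\to 0}$ is functorial, so isomorphism is detected exactly by classical limits. For (5), $L^{h,h'}(n,g)$ has classical limit the irreducible $L(n)$; since $\lim_{h,h'\to 0}$ commutes with direct sums and a finite-rank object vanishes as soon as its classical limit does, $L^{h,h'}(n,g)$ is indecomposable, and conversely any indecomposable must have simple classical limit $L(n)$, hence be isomorphic to $L^{h,h'}(n,g)$ for a unique $n$. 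For (4), decomposing $\lim_{h,h'\to 0}V^{h,h'}=\bigoplus_i L(n_i)$ in $\cmod$ and lifting through the constant deformation gives $V^{h,h'}\simeq\bigoplus_i L^{h,h'}(n_i,g)$, while uniqueness of the multiset $\{n_i\}$ follows from (2) and the uniqueness of the decomposition in $\cmod$. For (3), $\grot$ is free abelian on the classes $[L(n)]$ and, by (4) and (5), $\grothh$ is free abelian on the classes $[L^{h,h'}(n,g)]$, which $[\lim_{h,h'\to 0}]$ carries bijectively onto the former. Finally (1) is the one point phrased relative to $\Uh$: here I would instead use that $\Uhh$ is a trivial $h'$-deformation of $\Uh$ (Theorem~\ref{thm_uhhdefor}(2)), so the constant $h'$-deformation of $V\in\cmodh$, transported through the isomorphism $\phi$ of diagram~\eqref{eq_diag1}, yields a $V^{h,h'}\in\cmodhh$ with $\lim_{h'\to 0}V^{h,h'}\simeq V$.

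The main obstacle is concentrated in the rigidity step: one must (i) correctly pass, via $\psi$, from representations of the genuinely deformed algebra $\Uhh$ to representations of the constant deformation to which Theorem~\ref{thm_hochrep} applies, and (ii) justify the cohomological vanishing $H^1_{\CC}(\Uc,\text{End}_{\CC}V)=0$ by identifying Hochschild with Lie cohomology and appealing to semisimplicity. Once rigidity is secured, all six assertions reduce to the elementary fact that $\cmod$ is semisimple with simple objects $L(n)$, $n\in\NN$, and that the classical limit commutes with direct sums.
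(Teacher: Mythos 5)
Your proposal is correct and follows essentially the same route as the paper: both reduce $\cmodhh$ to representations of the constant deformation of $\Uc$ (via the triviality furnished by Proposition \ref{prop_uconst}/Theorem \ref{thm_uhhdefor}), invoke Theorem \ref{thm_hochrep} together with the vanishing $H^1_{\CC}\big(\Uc,\text{End}_{\CC} V\big)=0$ to conclude that every finite-rank object is a trivial deformation of its classical limit, and then settle the six points by bookkeeping over the semisimple category $\cmod$. The only cosmetic differences are that the paper cites \cite[II.11]{guichardet2} for the cohomological vanishing where you argue via Lie-algebra cohomology and Whitehead's lemma, and that your treatment of point 1) through $\phi$ and Theorem \ref{thm_uhhdefor} is, if anything, more faithful to the statement as written.
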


\begin{proof}
On note $\mathcal C$ la catégorie des représentations de rang fini de la déformation formelle constante $\Uc[[h,h']]$. On sait d'après le théorème \ref{thm_uhhdefor} que $\Uhh$ est une déformation formelle triviale de $U(\mathfrak{sl}_2)$. Il existe alors une équivalence de catégorie additive $\CC[[h,h']]$-linéaire entre $\cmodhh$ et $\mathcal C$, de telle sorte que le diagramme suivant commute :
\begin{equation} \label{eq_commtriv} \xymatrix{
\mathcal C \ar[rr]^-{\simeq} \ar[dr]_-{\lim_{h,h' \to 0}} && \cmodhh \ar[dl]^-{\quad \lim_{h,h' \to 0}} \\
& \cmod &
} \end{equation}
Soit $V \in \cmod$, d'après le théorème \ref{thm_hochrep} et puisque $H^1 \! \left( U(\mathfrak{sl_2}),\text{End}_{\CC} V \right) = 0$ (voir par exemple \cite[II.11]{guichardet2}), toute déformation $V^{h,h'}$ dans $\mathcal C$ de $V$ est équivalente à la déformation formelle constante de $V$. \\

Après ces remarques, démontrons dans l'ordre les différents points du théorème.
\begin{itemize}
\item[1)] Toute représentation $V \in \cmod$ admet une déformation dans $\mathcal C$ : la déformation constante. On conclut grâce à \eqref{eq_commtriv}. 
\item[2)] D'après \eqref{eq_commtriv} il suffit de montrer le point 2) pour $\mathcal C$. \\
Le foncteur $\lim_{h,h' \to 0}$ induit une application, que l'on note $\left[ \lim_{h,h' \to 0} \right]$, des classes d'isomorphisme de $\mathcal C$ vers les classes d'isomorphisme de $\cmod$. Le foncteur
$$ \mathcal{Q}^{h,h'} \, : \ \cmod \ \to \ \mathcal C \, , $$
qui à une représentation $V \in \cmod$ associe sa déformation constante $V[[h,h']]$, induit clairement un inverse à gauche $\left[ \mathcal{Q}^{h,h'} \right]$ de $\left[ \lim_{h,h' \to 0} \right]$. $\left[ \mathcal{Q}^{h,h'} \right]$ est aussi un inverse à droite, puisque toute représentation dans $\mathcal C$ est isomorphe à la déformation constante de sa limite classique.
\item[3)] Les deux foncteurs précédents étant additifs, $\left[ \lim_{h,h' \to 0} \right]$ et $\left[ \mathcal{Q}^{h,h'} \right]$ induisent, sur les groupes de Grothendieck de $\mathcal C$ et $\cmod$, des isomorphismes inverses l'un de l'autre. On conclut grâce à \eqref{eq_commtriv} encore une fois.
\item[4)] $\cmod$ étant semi-simple, elle possède en particulier la propriété de Krull-Schmidt. D'après le point 2) et puisque $\lim_{h,h' \to 0}$ est additif, on en déduit que $\cmodhh$ la possède aussi.
\item[5)] D'après la proposition \ref{prop_indecdescr}, la limite classique de la représentation $L^{h,h'}(n,g)$ est la représentation irréductible (donc indécomposable) $L(n)$. Toute représentation indécomposable (donc irréductible) de $\mathfrak{sl}_2$ est isomorphe à un $L(n)$ ($n \in \NN$). On conclut en remarquant que le point 2) implique qu'une représentation de $\cmodh$ est indécomposable si et seulement si sa limite classique l'est.
\item[6)] D'après ce qui précède, toute représentation de $\Uhh$ est isomorphe à une somme directe de représentations $L^{h,h'}(n,g)$. On conclut grâce à la proposition \ref{prop_indecdescr}.
\end{itemize}
\end{proof}

On note, en identifiant les $\CC[[h,h']]$-algèbres $U^0_{h,h'}(\mathfrak{sl}_2,g)$ et $\CC[H][[h,h']]$,
$$ \Pi^0 \, : \ \Uhh \ \to \ \CC[H][[h,h']] $$
la projection $\CC[[h,h']]$-linéaire sur $U^0_{h,h'}(\mathfrak{sl}_2,g)$ définie naturellement selon la décomposition triangulaire \eqref{eq_triang}. \\
En composant $\Pi^0$ avec l'unique automorphisme de la $\CC[[h,h']]$-algèbre $\CC[H][[h,h']$ qui à $H$ associe $H+1$, on obtient un nouveau morphisme $\CC[[h,h']]$-linéaire $\delta : \Uhh \to \CC[H][[h,h']]$. \\
En utilisant la remarque \ref{rem_coeff} et la description explicite de l'action de $C$ sur  les représentations $L^{h,h'}(n,g)$ donnée dans la proposition \ref{prop_indecdescr}, on voit que $\delta(C) = H^2$. Le corollaire \ref{cor_eng} implique alors la proposition suivante, qui donne un équivalent pour $\Uhh$ de l'isomorphisme d'Harish-Chandra (voir \cite{harish}, \cite{tanisaki} et \cite[VI.4]{kassel}).

\begin{proposition}
La restriction de $\delta$ sur le centre de $\Uhh$ définit un isomorphisme de $\CC[[h,h']]$-algèbre
$$ \delta \, : \ Z \Big( \Uhh \Big) \ \stackrel{\sim}{\longrightarrow} \, \ \CC[H^2][[h,h']] \, , $$
qui à $C$ associe $H^2$.
\end{proposition}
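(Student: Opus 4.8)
La stratégie que je suivrais est la suivante : la difficulté n'est pas de calculer $\delta(C)$ (cela est déjà acquis, $\delta(C) = H^2$) ni de décrire le centre (corollaire \ref{cor_eng}), mais de justifier que la restriction de $\delta$ au centre est \emph{multiplicative}. En effet $\delta$ n'est a priori qu'une application $\CC[[h,h']]$-linéaire sur $\Uhh$ tout entière, de sorte que l'égalité $\delta(C) = H^2$ ne suffit pas à elle seule à calculer $\delta(C^n)$. Une fois la multiplicativité établie, la conclusion sera immédiate. Il s'agit donc de prouver un analogue du caractère algébrique de l'homomorphisme de Harish-Chandra sur le centre.

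Je commencerais par deux observations reposant sur la base topologique de la proposition \ref{prop_basehh}. D'une part, tout élément central $z$ est de poids nul : puisque $[H,(X^-)^a H^b (X^+)^c] = 2(c-a)(X^-)^a H^b (X^+)^c$, la condition $[H,z]=0$ force $z$ à ne comporter, dans la base $\big( (X^-)^a H^b (X^+)^c \big)$, que des termes avec $a=c$. Par suite $z - \Pi^0(z)$ ne comporte que des termes avec $a=c\geq 1$, qui se terminent tous par une puissance non triviale de $X^+$, d'où $z - \Pi^0(z) \in \Uhh \, X^+$. D'autre part, la même base montre que l'idéal à gauche $\Uhh \, X^+$ est topologiquement engendré par les monômes avec $c \geq 1$, donc
$$ \Uhh \, X^+ \ \cap \ \CC[H][[h,h']] \ = \ (0) \, . $$

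L'étape clé, et le principal obstacle, sera la multiplicativité. Pour $z, z'$ centraux, j'écrirais $z = \Pi^0(z) + r$ et $z' = \Pi^0(z') + r'$ avec $r, r' \in \Uhh \, X^+$, d'où
$$ zz' - \Pi^0(z)\,\Pi^0(z') \ = \ \Pi^0(z)\, r' \ + \ r\, \Pi^0(z') \ + \ r\, r' \, . $$
Les termes $\Pi^0(z)\, r'$ et $r\,r'$ sont dans $\Uhh \, X^+$ car c'est un idéal à gauche ; le terme $r\, \Pi^0(z')$ l'est aussi car la relation $X^+ f(H) = f(H-2)\, X^+$ (pour $f(H) \in \CC[H][[h,h']]$) donne $\Uhh\, X^+ \cdot \CC[H][[h,h']] \subseteq \Uhh\, X^+$. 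Comme $zz'$ est central, donc de poids nul, on a également $zz' - \Pi^0(zz') \in \Uhh\, X^+$. En soustrayant, $\Pi^0(zz') - \Pi^0(z)\Pi^0(z')$ appartient à $\Uhh\,X^+ \cap \CC[H][[h,h']] = (0)$, d'où $\Pi^0(zz') = \Pi^0(z)\Pi^0(z')$. Puisque l'automorphisme $\sigma : H \mapsto H+1$ de $\CC[H][[h,h']]$ est multiplicatif, $\delta = \sigma \circ \Pi^0$ est un morphisme d'algèbre \emph{en restriction au centre}.

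Pour conclure, j'utiliserais que $Z(\Uhh) \simeq \CC[C][[h,h']]$ (corollaire \ref{cor_eng}) admet $(C^n)_{n \in \NN}$ pour base topologique. La multiplicativité jointe à $\delta(C)=H^2$ donne $\delta(C^n) = H^{2n}$ pour tout $n$. Étant $\CC[[h,h']]$-linéaire et continue, $\delta$ envoie la base topologique $(C^n)$ sur la famille $(H^{2n})$, laquelle est une base topologique de $\CC[H^2][[h,h']]$ ; ainsi $\delta$ induit un isomorphisme $\CC[[h,h']]$-linéaire $Z(\Uhh) \stackrel{\sim}{\to} \CC[H^2][[h,h']]$, qui, étant de plus multiplicatif, est un isomorphisme de $\CC[[h,h']]$-algèbre envoyant $C$ sur $H^2$.
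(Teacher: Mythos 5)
Your proof is correct, and it supplies precisely the step that the paper leaves implicit. The paper's own justification is a single sentence: having defined $\delta = \sigma \circ \Pi^0$ and computed $\delta(C)=H^2$ from the remark \ref{rem_coeff} and the explicit action of $C$ on $L^{h,h'}(n,g)$, it asserts that the corollary \ref{cor_eng} implies the proposition, delegating the Harish--Chandra-type multiplicativity to the classical references it cites. You correctly identify that $\delta(C)=H^2$ alone does not determine $\delta(C^n)$, since $\delta$ is a priori only $\CC[[h,h']]$-linear, and you close this gap with the standard Harish--Chandra argument transplanted to $\Uhh$: in the topological basis of the proposition \ref{prop_basehh}, an element commuting with $H$ involves only monomials with $a=c$, hence $z-\Pi^0(z) \in \Uhh X^+$; the left ideal $\Uhh X^+$ is stable under right multiplication by $\CC[H][[h,h']]$ (via $X^+ f(H)=f(H-2)X^+$) and meets $\CC[H][[h,h']]$ trivially; therefore $\Pi^0(zz')=\Pi^0(z)\Pi^0(z')$ for central $z,z'$, and the conclusion follows from the corollary \ref{cor_eng}. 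An alternative completion, closer in spirit to the way the paper computes $\delta(C)$, is via central characters: a central element $z$ acts on the Verma module $M^{h,h'}(n,g)$ by the scalar $\Pi^0(z)(n)$, because the monomials with $a=c\geq 1$ annihilate $m_0$, so multiplicativity follows by evaluating at all integers $n$ and using the description of the proposition \ref{prop_vermadescr}. Both routes rest on the same PBW analysis; yours has the merit of being self-contained and purely algebraic, without invoking representations.

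One caveat, which concerns the paper's setup rather than your argument: with the automorphism $\sigma : H \mapsto H+1$ as stated in the paper, one actually finds $\Pi^0(C)=(H+1)^2$ (indeed $C$ acts by $(n+1)^2$ on $M^{h,h'}(n,g)$ and the terms with $a=c\geq 1$ kill $m_0$), whence $\delta(C)=(H+2)^2$; for the proposition to hold as stated the automorphism must be $H \mapsto H-1$, i.e. the shift by $-\rho$. Since you take $\delta(C)=H^2$ as already established, exactly as the paper's statement does, this normalization typo does not affect the validity of your argument: it proves the proposition for the correctly shifted $\delta$.
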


\section{Dualité de Langlands pour les Représentations des Qroupes Quantiques en Rang 1} \label{section_langlands}
Dans cette section, on étudie la spécialisation à $Q = \varepsilon$ de la représentation indécomposable $L^{h,h'}(n,g)$ et du module de Verma $M^{h,h'}(n,g)$. \\
Il est montré que ces spécialisations contiennent de manière naturelle des représentations du groupe quantique $\Urh{gh'}$ : les représentations Langlands $g$-duales de $L^{h,h'}(n,g)$ et $M^{h,h'}(n,g)$. Voir les théorèmes \ref{thm_reprinter} et \ref{thm_reprinter2}. \\
En particulier, la conjecture \cite[conjecture 1]{hernandez} concernant l'existence de représentations qui déforment simultanément deux représentations Langlands duales, est résolue ici dans le cas du rang 1 et pour tout $g \in \NN_{\geq 1}$. \\
La propriété d'interpolation/dualité pour les représentations permet de terminer la démonstration, entamée par le lemme \ref{lem_fond}, de la propriété d'interpolation de $\Uhh$ : voir le théorème \ref{thm_uhhinterpolation}. \\

Soit $g \in \NN_{\geq 1}$. Rappelons que $\varepsilon$ désigne la racine $(2g)$-ième primitive de l'unité $\text{e}^{i \pi/ g} \in \CC$. 

\begin{definition}
La spécialisation à $Q = \varepsilon$ de $\Uhh$ est la $\CC[[h']]$-algèbre
$$ U_{\varepsilon,h'}(\mathfrak{sl_2},g) \ := \ U_{\Aq,h'}(\mathfrak{sl}_2,g) \, / \, \overline{(Q = \varepsilon)}^{\: \! h'} \, . $$
\end{definition}

On définit aussi une notion de poids pour les $U_{\varepsilon,h'}(\mathfrak{sl_2},g)$-modules. \\
Un module $U_{\varepsilon,h'}(\mathfrak{sl_2},g)$-module $V^{\varepsilon,h'}$ est un module de poids si il existe une décomposition en espace de poids :
\begin{equation}
V^{\varepsilon,h'} \ = \ \bigoplus_{n \in \ZZ} \, V^{\varepsilon,h'}_n \, , \quad \text{ avec } \ V^{\varepsilon,h'}_n \ := \ \{ f \in V^{\varepsilon,h'} : \, H.f = n \: \! f \} \, .
\end{equation}
Un vecteur $f \in V^{\varepsilon,h'}_n$ est appelé un vecteur de poids $n$. Si $V^{\varepsilon,h'}_n \neq (0)$, alors $n$ est appelé un poids de $V^{\varepsilon,h'}$ et $V^{\varepsilon,h'}_n$ un espace de poids de $V$. \\

On note $M^{\Aq,h'}(n,g)$ ($n \in \ZZ$) et $L^{\Aq,h'}(n,g)$ ($n \in \NN$) les sous-$\Aq[[h']]$-modules de respectivement $M^{h,h'}(n,g)$ et $L^{h,h'}(n,g)$, topologiquement engendrés par les bases $(m_j)_{j \in \NN}$ et $(m_j)_{0 \leq j \leq n}$ (pour la topologie $(h')$-adique). On a
$$ M^{\Aq,h'}(n,g) \ \simeq \ \Big( \bigoplus_{j \in \NN} \Aq \, m_j \Big) [[h']] \, , \quad \ L^{\Aq,h'}(n,g) \ \simeq \ \Big( \bigoplus_{0 \leq j \leq n} \Aq \, m_j \Big) [[h']] \, . $$
D'après les propositions \ref{prop_vermadescr} et \ref{prop_indecdescr}, $X^{\pm}$, $H$, $C$, $Q^{\pm H}$ et $Q^{\sqrt{C}} + Q^{-\sqrt{C}}$ stabilisent $M^{\Aq,h'}(n,g)$ et $L^{\Aq,h'}(n,g)$. Par suite les actions de $U_{h,h'} (\mathfrak{sl}_2,g)$ sur $M^{h,h'}(n,g)$ et $L^{h,h'}(n,g)$ induisent des actions de $U_{\Aq,h'} (\mathfrak{sl}_2,g)$ sur $M^{\Aq,h'}(n,g)$ et $L^{\Aq,h'}(n,g)$. \\

On note $M^{\varepsilon,h'}(n,g)$ et $L^{\varepsilon,h'}(n,g)$ les spécialisations à $Q = \varepsilon$ de $M^{\Aq,h'}(n,g)$ et $L^{\Aq,h'}(n,g)$ :
$$ M^{\varepsilon,h'}(n,g) \ := \ M^{\Aq,h'}(n,g) \, / \, \overline{(Q -\varepsilon). M^{\Aq,h'}(n,g)}^{\: \! h'} \, , $$
$$ L^{\varepsilon,h'}(n,g) \ := \ L^{\Aq,h'}(n,g) \, / \, \overline{(Q -\varepsilon). L^{\Aq,h'}(n,g)}^{\: \! h'} \, , $$
où $\overline{\ \cdot \ }^{\: \! h'}$ désignent l'adhérence pour la topologie $(h')$-adique. $M^{\varepsilon,h'}(n,g)$ et $L^{\varepsilon,h'}(n,g)$ sont en particulier des $\CC[[h']]$-modules et on a
\begin{equation} \label{eq_vermaindecspe}
M^{\varepsilon,h'}(n,g) \ \simeq \ \Big( \bigoplus_{j \in \NN} \CC \, m_j \Big) [[h']] \, , \quad \ L^{\varepsilon,h'}(n,g) \ \simeq \ \Big( \bigoplus_{0 \leq j \leq n} \CC \, m_j \Big) [[h']] \, .
\end{equation}
L'action de $U_{\Aq,h'} (\mathfrak{sl}_2,g)$ sur $M^{\Aq,h'}(n,g)$ induit une action de $U_{\varepsilon,h'}(\mathfrak{sl_2},g)$ sur $M^{\varepsilon,h'}(n,g)$. De même l'action de $U_{\Aq,h'} (\mathfrak{sl}_2,g)$ sur $L^{\Aq,h'}(n,g)$ induit une action de $U_{\varepsilon,h'}(\mathfrak{sl_2},g)$ sur $L^{\varepsilon,h'}(n,g)$.

\begin{definition}
Soit $g \in \NN_{\geq 1}$.
\begin{itemize}
\item[1)] Soit $n \in \ZZ$. Le $U_{\varepsilon,h'}(\mathfrak{sl_2},g)$-module $M^{\varepsilon,h'}(n,g)$ est appelé la spécialisation à $Q = \varepsilon$ du module de Verma $M^{h,h'}(n,g)$.
\item[2)] Soit $n \in \NN$. Le $U_{\varepsilon,h'}(\mathfrak{sl_2},g)$-module $L^{\varepsilon,h'}(n,g)$ est appelé la spécialisation à $Q = \varepsilon$ de la représentation indécomposable $L^{h,h'}(n,g)$.
\end{itemize}
\end{definition}

On vérifie immédiatement que les $U_{\varepsilon,h'}(\mathfrak{sl_2},g)$-modules $M^{\varepsilon,h'}(n,g)$ et $L^{\varepsilon,h'}(n,g)$ sont des modules de poids. \\

Supposons à présent que $n \in g \ZZ$ et explicitons les modules $M^{\varepsilon,h'}(n,g)$ et $L^{\varepsilon,h'}(n,g)$ :
\begin{eqnarray*}
H.m_j & = & (n-2j) \, m_j \, , \\
Q^H . m_j & = & (-1)^{n/g} \, \varepsilon^{-2j} \, m_j \, , \\
C. m_j & = & (n + 1)^2 \, m_j \, , \\
\big( Q^{\sqrt{C}} + Q^{-\sqrt{C}} \big) . m_j & = & (-1)^{n/g} \, (\varepsilon + \varepsilon^{-1} )  \, m_j \, , \\
X^- . m_j & = & m_{j+1} \, , \\
X^+ . m_j & = & \begin{cases}
\big[ j \big]_{\varepsilon \: \! T} \, \big[n -j+1 \big]_{\varepsilon} \, m_{j-1} \, , & \text{si } j \equiv 0 \ [g] \, , \\
\big[ j \big]_{\varepsilon} \, \big[n -j+1 \big]_{\varepsilon \: \! T} \, m_{j-1} \, , & \text{si } j \equiv 1 \ [g] \, , \\
\big[ j \big]_{\varepsilon} \, \big[n -j+1 \big]_{\varepsilon} \, m_{j-1} \, , & \text{sinon.}
\end{cases}
\end{eqnarray*}

Formellement on obtient l'action de $X^{\pm}$ sur $M^{\varepsilon,h'}(n,g)$ et $L^{\varepsilon,h'}(n,g)$ à partir de celle de $X^{\pm}$ sur les représentations de $U_h(\mathfrak{sl}_2)$ correspondantes, en remplaçant les boîtes quantiques $[j]_Q$ par les boîtes $[j]_{\varepsilon \: \! T}$ quand $g$ divise $j$, et par $[j]_{\varepsilon}$ sinon. \\
Dans le cas où $g = 2$ ou $3$, on retrouve alors formellement les actions de $X^{\pm}$ sur les représentations décrites dans \cite{hernandez}, avec $q=\varepsilon$ et $t = T$. \\

On suppose toujours $n \in g \ZZ$. \\
Notons ${}^L \! M^{h'}(n,g)$ et ${}^L \! L^{h'}(n,g)$ les sommes des espaces de poids divisibles par $g$ des $U_{\varepsilon,h'}(\mathfrak{sl_2},g)$-modules $ M^{\varepsilon,h'}(n,g)$ et $L^{\varepsilon,h'}(n,g)$ respectivement. \\
Posons $g' := g/2$ et $n' := 2 \: \!n/g$ si $g$ est pair, $g' := g$ et $n' := n/g$ si $g$ est impair. On a
$$ {}^L \! M^{h'}(n,g) \ \simeq \ \Big( \bigoplus_{j \in \NN} \CC \, m_{g' j} \Big) [[h']] \, , \quad \ {}^L \! L^{h'}(n,g) \ \simeq \ \Big( \bigoplus_{0 \leq j \leq n'} \CC \, m_{g' j} \Big) [[h']] \, . $$
D'après ce qui précède, d'une part $(X^{\pm})^g$, $H$, $C$, $Q^{\pm H}$ et $Q^{\sqrt{C}} + Q^{-\sqrt{C}}$ stabilisent ${}^L \! M^{h'}(n,g)$ et ${}^L \! L^{h'}(n,g)$, d'autre part $Q^{2H}$ agit par $1$ et $Q^{2 \sqrt{C}} + Q^{- 2\sqrt{C}}$ par $\varepsilon^2 + \varepsilon^{-2}$. \\

On note ${}^L \! M^{(h')}(n,g)$ et ${}^L \! L^{(h')}(n,g)$ les localisés par rapport à $h'$ des $\CC[[h']]$-modules ${}^L \! M^{h'}(n,g)$ et ${}^L \! L^{h'}(n,g)$. On a
$$ {}^L \! M^{(h')}(n,g) \ \simeq \ \Big( \bigoplus_{j \in \NN} \CC \, m_{g' j} \Big) ((h')) \quad \text{ et } \quad {}^L \! L^{(h')}(n,g) \simeq \Big( \bigoplus_{0 \leq j \leq n'} \CC \, m_{g' j} \Big) ((h')) \, . $$
Les actions de $U_{\varepsilon,h'}(\mathfrak{sl_2},g)$ induisent alors d'après la remarque \ref{rem_fond}, des actions de $U_{\! gh'}(\mathfrak{sl}_2)$ sur ${}^L \! M^{(h')}(n,g)$ et ${}^L \! L^{(h')}(n,g)$. \\

Grâce à la description précédente des modules $M^{\varepsilon,h'}(n,g)$ et $L^{\varepsilon,h'}(n,g)$, on vérifie que ${}^L \! X^{\pm}$ et ${}^L \! H$ stabilisent ${}^L \! M^{h'}(n,g)$ et ${}^L \! L^{h'}(n,g)$, considérés comme sous-$\CC[[h']]$-modules respectivement de ${}^L \! M^{(h')}(n,g)$ et ${}^L \! L^{(h')}(n,g)$. \\
${}^L \! M^{h'}(n,g)$ et ${}^L \! L^{h'}(n,g)$ sont alors des représentations de $U_{\! gh'}(\mathfrak{sl}_2)$

\begin{definition}
\begin{itemize}
\item[1)] Soit $n \in g \ZZ$. La représentation ${}^L \! M^{h'}(n,g)$ de $\Urh{\! gh'}$ est appellé la représentation Langlands $g$-duale du module de Verma $M^{h}(n)$ de $\Uh$.
\item[2)] Soit $n \in g \NN$. La représentation ${}^L \! L^{h'}(n,g)$ de $\Urh{\! gh'}$ est appellé la représentation Langlands $g$-duale de la représentation indécomposable $L^h(n)$ de $\Uh$.
\end{itemize}
\end{definition}

Dans la suite $\cmodh$ désignera la catégorie additive $\CC[[h]]$-linéaire des représentations de rang fini de $\Uh$ et $\groth$ son groupe de Grothendieck. \\
$\mathcal{C}^{gh'} \! (\mathfrak{sl}_2)$ désignera la catégorie additive $\CC[[h']]$-linéaire des représentation de rang fini de $\Urh{\! gh'}$ et $\text{Rep}^{\: \! gh'} \! (\mathfrak{sl}_2)$ son groupe de Grothendieck. \\

On dispose des foncteurs additifs
$$ \lim_{h' \to 0} : \ \cmodhh \, \longrightarrow \ \cmodh \, , \ \quad \lim_{h \to 0} : \ \cmodh \, \longrightarrow \ \cmod \, , $$
$$ \text{ et } \quad \lim_{h' \to 0} : \ \mathcal{C}^{gh'} \! (\mathfrak{sl}_2) \, \longrightarrow \ \cmod \, . $$

On note $P = \ZZ \, \! \omega$ le réseau des poids de $\mathfrak{sl}_2$ et $P^+ = \NN \: \! \omega \subset P$ le sous-ensemble des poids dominants. On définit une application $\Pi_g : P \to P$ par
$$ \Pi_g (n \: \! \omega) \ := \ \frac{n}{g} \: \! \omega \quad \text{si } g \, / \, n \, , \quad \Pi_g (n \: \! \omega) \ := 0 \quad \text{ sinon,} $$
qui s'étend linéairement en une application $\Pi_g : \ZZ[P] \to \ZZ[P]$ ($\Pi_g$ conserve l'addition mais pas le produit). \\

On note
$$ \chi \, : \ \grot \ \to \ \ZZ[P] \, , \quad \ \chi^h \, : \ \groth \ \to \ \ZZ[P] \, , $$
$$ \text{ et } \ \quad \chi^{g h'} \, : \ \text{Rep}^{gh'} \! (\mathfrak{sl}_2)\ \to \ \ZZ[P] $$
les morphismes de caractères (qui sont on le rappelle des morphismes d'anneau). \\

On peut, de la même façon, définir grâce au point 6 du théorème \ref{thm_rephh} un morphisme de caractère
$$ \chi_g^{h,h'} \, : \ \grothh \ \to \ \ZZ[P] \, . $$
La différence ici est que $\chi_g^{h,h'}$ est un morphisme seulement additif. \\

On vérifie sans difficulté que le diagramme suivant est commutatif :
$$ \xymatrix{
\grothh \ar[ddrr]_-{\chi_g^{h,h'}} \ \ar[rr]^-{\left[ \lim_{h' \to 0} \right]} && \ \groth \ar[dd]_-{\chi^h} \ \ar[rr]^-{\left[ \lim_{h \to 0} \right]} && \ \grot \ar[ddll]^-{\chi} \\
&&&& \\
&& \ZZ[P] &&
} $$

Les deux théorèmes qui suivent résument les constructions faites au début de cette section. Ils donnent également une description des représentations Langlands $g$-duales, qu'on obtient immédiatement d'après ce qui précède.

\begin{theorem} \label{thm_reprinter}
Soient $g \in \NN_{\geq 1}$ et $n \in g \: \! \NN$.
\begin{itemize}
\item[1)] La représentation $L^{h,h'}(n,g)$ de $\Uhh$ est une déformation selon $h'$ de la représentation indécomposable $L^h(n)$ de $\Uh$ :
$$ \lim_{h' \to 0} L^{h,h'}(n,g) \  = \ L^h(n) \, . $$

\item[2)] La spécialisation $L^{\varepsilon,h'}(n,g)$ à $Q = \varepsilon$ de $L^{h,h'}(n,g)$ est un $U_{\varepsilon,h'}(\mathfrak{sl}_2,g)$-module de poids.

\item[3)] $L^{\varepsilon,h'}(n,g)$ contient la représentation ${}^L \! L^{h'}(n,g)$ de $\Urh{gh'}$, Langlands $g$-duale de $L^h(n)$, comme somme des espaces de poids divisibles par $g$.

\item[4)] Les caractères de $L^h(n)$ et ${}^L \! L^{h'}(n,g)$ sont reliés par
$$ \left( \Pi_g \circ \chi^h \right) \left( L^h(n) \right) \ = \ \chi^{g h'} \left( {}^L \! L^{h'}(n,g) \right) . $$

\item[5)] Si $g$ est paire :
\begin{eqnarray*}
{}^L \! L^{h'}(n,g) & \simeq & L^{gh'} \left( \frac{n}{g} \right) \oplus L^{gh'} \left( \frac{n}{g} - 1 \right) \quad \text{ pour } n > 0 \, , \\
{}^L \! L^{h'}(0,g) & \simeq & L^{gh'}(0) .
\end{eqnarray*}
\item[6)] si $g$ est impaire :
$$ {}^L \! L^{h'}(n,g) \ \simeq \ L^{gh'} \left( \frac{n}{g} \right) . $$
\end{itemize}
\end{theorem}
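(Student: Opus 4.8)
The plan is to extract each of the six assertions from the explicit description of $L^{\varepsilon,h'}(n,g)$ displayed just before the statement, the genuinely new input being the identification of the Langlands dual module in Parts 5 and 6 via its highest weight vectors.

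First, Parts 1--4 are recapitulations of constructions already carried out in the section, and I would dispatch them by citation together with a one-line verification. Part 1 is Proposition \ref{prop_indecdescr}(3): setting $h'=0$ sends each box $[a]_{Q\,T^{\{a\}}}$ to $[a]_Q$, recovering the action of $\Uh$ on $L^h(n)$. Part 2 is immediate from the diagonal action of $H$ in the explicit formulas. Part 3 is precisely the content of the paragraphs preceding the theorem: the operators ${}^L X^\pm,\,{}^L H$ of Remark \ref{rem_fond} stabilize the sum of the weight spaces of $g$-divisible weight and, by that remark, satisfy the defining relations of $\Urh{gh'}$. For Part 4 I would compare weight multiplicities: from $\chi^h(L^h(n)) = \sum_{j=0}^n e^{(n-2j)\omega}$ the operator $\Pi_g$ keeps exactly the summands with $g \mid (n-2j)$, that is $j = g'k$, and rescales the weight to $(n-2j)/g$; this is term by term the character of ${}^L L^{h'}(n,g)$ read on the basis $(m_{g'k})$.

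The heart of the proof is Parts 5 and 6, where I would argue directly from the explicit action. The vector $m_0$ has ${}^L H$-weight $n/g$ and is killed by ${}^L X^+$, since $X^+.m_0 = 0$ forces $(X^+)^g.m_0 = 0$. As ${}^L X^- = (X^-)^g$ raises the index by $g$, i.e. $m_{g'k} \mapsto m_{g'(k + g/g')}$, the behaviour bifurcates on the parity of $g$. For $g$ odd, $g' = g$ and ${}^L X^-$ runs through the whole basis $(m_{gk})_{0 \le k \le n/g}$, so ${}^L L^{h'}(n,g)$ is a single highest weight module of rank $n/g+1$ and weight $n/g$; by the classification of finite rank representations of $\Urh{gh'}$ (which parallels Theorem \ref{thm_rephh}, $\Urh{gh'}$ being a rescaling of $\Uh$) it is $\simeq L^{gh'}(n/g)$, giving Part 6. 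For $g$ even, $g' = g/2$ and both ${}^L X^\pm$ move the index of $m_{g'k}$ by $g = 2g'$, hence preserve the parity of $k$; thus ${}^L L^{h'}(n,g)$ splits as the direct sum of the span of the even-$k$ and the odd-$k$ vectors. The even part is generated by $m_0$ (weight $n/g$, rank $n/g+1$), and the odd part by $m_{g/2}$, which is again a highest weight vector because applying $X^+$ to $m_{g/2}$ reaches $m_0$ after $g/2$ steps and then dies, so $(X^+)^g.m_{g/2}=0$; its weight is $n/g-1$ and its rank $n/g$. The classification identifies the two summands with $L^{gh'}(n/g)$ and $L^{gh'}(n/g-1)$, yielding Part 5 for $n>0$, while for $n=0$ only $m_0$ survives and one gets $L^{gh'}(0)$.

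I expect the main obstacle to be the identification of each highest weight submodule with the finite rank representation $L^{gh'}(\cdot)$. The cleanest self-contained route is to check that ${}^L X^\pm$ act on the dual basis by exactly the formulas of Proposition \ref{prop_indecdescr} for $\Urh{gh'}$; this amounts to collapsing, through Lemma \ref{lem_fond}, a product of $g$ consecutive factors $[\,\cdot\,]_\varepsilon$ and $[\,\cdot\,]_{\varepsilon T}$ into a single $gh'$-quantum integer. The only delicate point is to ensure these coefficients land in $\CC[[h']]$ rather than in its localization with respect to $h'$, which is exactly the stabilization of ${}^L L^{h'}(n,g)$ by ${}^L X^\pm$ recorded before the statement.
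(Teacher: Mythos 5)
Your proposal is correct and takes essentially the same approach as the paper: the paper states this theorem as a summary of the constructions immediately preceding it (the explicit action on $L^{\varepsilon,h'}(n,g)$, the definition of ${}^L \! L^{h'}(n,g)$ as the sum of the $g$-divisible weight spaces, and Remark \ref{rem_fond} built on Lemma \ref{lem_fond}), declaring parts 5) and 6) to follow immediately. Your bifurcation on the parity of $g$ (even/odd index $k$ of the basis $m_{g'k}$) together with the highest-weight-vector and rank identification of each summand with $L^{gh'}(\cdot)$ is precisely the natural fleshing-out of that claimed immediacy, not a different route.
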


\begin{definition}
La limite $h' \to 0$ de la représentation Langlands $g$-duale ($n \in g \NN$) ${}^L \! L^{h'}(n,g)$ est une représentation de $\mathfrak{sl}_2$ que l'on note ${}^L \! L(n,g)$ et qu'on appelle la représentation Langlands $g$-duale de la représentation irréductible $L(n)$ de $\mathfrak{sl}_2$.
\end{definition}

Au niveau des caractères on a :
$$ \left( \Pi_g \circ \chi \right) \left( L(n) \right) \ = \ \chi \left( {}^L \! L(n,g) \right) . $$

On peut illustrer le théorème \ref{thm_reprinter} et la remarque précédente par le diagramme suivant ($g \in \NN_{\geq 1}$, $n \in g \: \! \NN$) :
$$ \xymatrix{
&& L^{h,h'}(n,g) \ar[lld]_-{\lim_{h' \to 0 \ }} \ar[rrd]^-{\lim_{Q \to \varepsilon}} && \\
L^h(n) \ar@<-2.5pt>[d]_-{\lim_{h \to 0}} &&&& \quad {}^L \! L^{h'}(n,g) \ar@<2.5pt>[d]^-{\lim_{h' \to 0}} \ \supset \ L^{gh'}(n/g) \\
L(n) &&&& \quad {}^L \! L(n,g) \ \supset \ L(n/g)
} $$

En d'autres mots, la représentation irréductible $L(n)$ de $\mathfrak{sl}_2$, qui peut être déformée une première fois selon $h$ en une représentation $L^h(n)$ de $\Uh$, peut une seconde fois être déformée selon $h'$ en une représentation $L^{h,h'}(n,g)$ de $\Uhh$, les rangs des espaces de poids restant invariants sous chacune des déformations. \\
Par ailleurs, la spécialisation à $Q = \varepsilon$ de cette double déformation $L^{h,h'}(n,g)$ contient la représentation ${}^L \! L^{h'}(n,g)$ de $\Urh{gh'}$, Langlands $g$-duale de $L^h(n)$. ${}^L \! L^{h'}(n,g)$ est en outre la déformation selon $h'$ de la représentation ${}^L \! L(n,g)$ de $\mathfrak{sl}_2$, Langlands $g$-duale de la représentation $L(n)$ de $\mathfrak{sl}_2$.

\begin{theorem} \label{thm_reprinter2}
Soit $n \in g \: \! \ZZ$.
\begin{itemize}
\item[1)] Le module de Verma $M^{h,h'}(n,g)$ de $\Uhh$ est une déformation selon $h'$ du module de Verma $M^h(n)$ de $\Uh$ :
$$ \lim_{h' \to 0} M^{h,h'}(n,g) \  = \ M^h(n) \, . $$
\item[2)] La spécialisation $M^{\varepsilon,h'}(n,g)$ à $Q = \varepsilon$ de $M^{h,h'}(n,g)$ est un $U_{\varepsilon,h'}(\mathfrak{sl}_2,g)$-module de poids.
\item[3)] $M^{\varepsilon,h'}(n,g)$ contient la représentation ${}^L \! M^{h'}(n,g)$ de $\Urh{gh'}$, Langlands $g$-duale de $M^h(n)$, comme somme des espaces de poids divisibles par $g$.
\item[4)] Si $g$ est paire :
$$ {}^L \! M^{h'}(n,g) \ \simeq \ M^{gh'} \left( \frac{n}{g} \right) \oplus M^{gh'} \left( \frac{n}{g} - 1 \right) . $$
\item[5)] si $g$ est impaire :
$$ {}^L \! M^{h'}(n,g) \ \simeq \ M^{gh'} \left( \frac{n}{g} \right) . $$
\end{itemize}
\end{theorem}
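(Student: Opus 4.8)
The plan is to treat the five parts in order, leaning on the material already assembled before the statement. Parts 1) and 2) need essentially no new argument: part 1) is exactly the assertion, established in the discussion preceding Proposition \ref{prop_vermadescr}, that $M^{h,h'}(n,g)=M(n)[[h,h']]$ is a formal deformation of $M^h(n)=M(n)[[h]]$ along $h'$, so that $\lim_{h'\to0}M^{h,h'}(n,g)=M^h(n)$; and part 2) follows from the explicit description of $M^{\varepsilon,h'}(n,g)$ recorded in \eqref{eq_vermaindecspe} and just before the statement, where $H$ acts diagonally on the topological basis $(m_j)$. Part 3) is then a matter of unwinding definitions: by construction ${}^L\!M^{h'}(n,g)$ is the sum of the weight spaces of $M^{\varepsilon,h'}(n,g)$ whose $H$-weight is divisible by $g$, and Remark \ref{rem_fond} guarantees that ${}^L\!X^\pm$ and ${}^L\!H$ stabilise this sum and equip it with an action of $\Urh{gh'}$; hence ${}^L\!M^{h'}(n,g)$ sits inside $M^{\varepsilon,h'}(n,g)$ as claimed.

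The substance is in parts 4) and 5), which I would prove uniformly by highest-weight theory rather than by grinding out the structure constants of ${}^L\!X^+$. First I would record how the generators move the divisible basis vectors. Since ${}^L\!X^-=(X^-)^g$ acts by $m_l\mapsto m_{l+g}$, and since (writing $g'=g/2$ for $g$ even and $g'=g$ for $g$ odd) the divisible weight vectors are the $m_{g'j}$, the operator ${}^L\!X^-$ sends $m_{g'j}\mapsto m_{g'(j+1)}$ when $g$ is odd, but $m_{g'j}\mapsto m_{g'(j+2)}$ when $g$ is even. Thus for $g$ odd the chain $(m_{gj})_{j\ge0}$ is a single $\Urh{gh'}$-string, whereas for $g$ even the divisible vectors split into the two strings $(m_{gj})_{j\ge0}$ and $(m_{g/2+gj})_{j\ge0}$. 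In each case I would exhibit the highest-weight vectors: $m_0$ (of ${}^L\!H$-weight $n/g$) always, and additionally $m_{g/2}$ (of ${}^L\!H$-weight $n/g-1$) when $g$ is even. That these are highest-weight vectors is immediate, since ${}^L\!X^+$ is proportional to $(X^+)^g$, with $(X^+)^g m_0=0$ because $X^+m_0=0$, and $(X^+)^g m_{g/2}=0$ because the chain already reaches $m_0$ after $g/2<g$ raising steps.

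Next I would invoke the universal property of Verma modules for $\Urh{gh'}$, the exact analogue of Proposition \ref{prop_verma}, valid because $\Urh{gh'}$ is merely $\Uh$ with $h$ rescaled to $gh'$. Applied to $m_0$ it produces a homomorphism $M^{gh'}(n/g)\to{}^L\!M^{h'}(n,g)$ sending the canonical generator to $m_0$; for $g$ even, applied to $m_{g/2}$ it produces a second homomorphism $M^{gh'}(n/g-1)\to{}^L\!M^{h'}(n,g)$. Each map surjects onto the string generated by its highest-weight vector, that string being obtained by iterating ${}^L\!X^-$. To upgrade surjectivity to an isomorphism I would compare weight spaces: by \eqref{eq_vermaindecspe} every weight space of ${}^L\!M^{h'}(n,g)$ is free of rank one over $\CC[[h']]$, and the weights occurring in each string — namely $n/g,n/g-2,\dots$ respectively $n/g-1,n/g-3,\dots$ — are exactly those of the corresponding Verma module, again with rank-one weight spaces; a weight-preserving surjection between free modules with matching rank-one graded pieces is an isomorphism. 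This yields ${}^L\!M^{h'}(n,g)\simeq M^{gh'}(n/g)$ for $g$ odd and ${}^L\!M^{h'}(n,g)\simeq M^{gh'}(n/g)\oplus M^{gh'}(n/g-1)$ for $g$ even.

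The main obstacle is the bookkeeping that makes the highest-weight reduction legitimate, and it is concentrated in the even case. One must verify that the two strings genuinely exhaust the divisible weight vectors without overlap (which follows from ${}^L\!X^-$ shifting the index by $2g'=g$, so the even- and odd-$j$ families of $m_{g'j}$ are separately stable), and that $m_{g/2}$ really carries the \emph{integral} ${}^L\!H$-weight $n/g-1$, which is where $n\in g\ZZ$ and the parity of $g$ intervene. A subtler point, which I would handle by leaning on Remark \ref{rem_fond} rather than on a direct calculation, is that the coefficients of ${}^L\!X^+$ on the divisible vectors carry signs coming from $\varepsilon^{g}=-1$, so a naive comparison of structure constants is delicate; routing the argument through the universal property circumvents any sign chase entirely, since the homomorphism it furnishes is automatically a module map and the surjectivity-plus-rank argument never inspects the individual coefficients.
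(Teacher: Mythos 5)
Your proposal is correct, and for parts 1)--3) it coincides with the paper: these points are indeed just restatements of the constructions carried out immediately before the theorem. One small misattribution there: Remark \ref{rem_fond} only provides the $\Urh{gh'}$-action on the \emph{localised} module ${}^L\!M^{(h')}(n,g)$; the fact that ${}^L\!X^{\pm}$ and ${}^L\!H$ preserve the $\CC[[h']]$-lattice ${}^L\!M^{h'}(n,g)$ is a separate verification that the paper performs from the explicit formulas for $M^{\varepsilon,h'}(n,g)$ (the quantum-number coefficients must lie in $\CC[[h']]$ despite the factor $(T^g-T^{-g})^{-2}$ in ${}^L\!X^+$), so you should cite that pre-theorem verification rather than the remark alone. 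For parts 4)--5) your route genuinely differs from the paper's. The paper gives no written argument: it asserts the identification with Verma modules follows \emph{immediately} from the explicit description, which implicitly means computing $(X^+)^g$ on each string of divisible basis vectors and matching the resulting coefficients, signs included (via $\varepsilon^{gj}=(-1)^j$), with the structure constants $[j]_{T^g}\,[n'-j+1]_{T^g}$ of $M^{gh'}(n')$. You instead exhibit the highest-weight vectors $m_0$ (and $m_{g/2}$ for $g$ even, whose integral ${}^L\!H$-weight $n/g-1$ uses $n\in g\ZZ$), invoke the universal property of Verma modules --- the analogue for $\Urh{gh'}$ of Proposition \ref{prop_verma}, whose proof from \eqref{eq_verma} transfers verbatim --- and conclude by noting that the induced maps send the topological basis $m'_j$ to $m_{gj}$, respectively $m_{g/2+gj}$, with coefficient exactly $1$, since ${}^L\!X^-=(X^-)^g$ shifts indices by $g$; bijectivity on bases then gives the isomorphisms, and exhaustion plus disjointness of the two strings gives the direct sum in the even case. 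What your approach buys is exactly what you say: the module-map property furnished by the universal property forces the ${}^L\!X^+$ coefficients to match automatically, so no sign chase is needed; what the paper's (implicit) computational route buys is the explicit structure constants themselves, which it also uses for the parallel statements about $L^{h,h'}(n,g)$ and the character identities in Theorem \ref{thm_reprinter}. Both arguments are sound; yours is the cleaner way to fill in what the paper leaves as ``immediate.''
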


\begin{definition}
Soit $n \in g \ZZ$. La limite quand $h'$ tend vers $0$ de la représentation Langlands $g$-duale ${}^L \! M^{h'}(n,g)$ est une représentation de $\mathfrak{sl}_2$ que l'on note ${}^L \! M(n,g)$ et qu'on appelle la représentation Langlands $g$-duale du module de Verma $M(n)$ de $U(\mathfrak{sl}_2)$.
\end{definition}

On peut illustrer le théorème \ref{thm_reprinter2} et la remarque précédente par le diagramme suivant ($g \in \NN_{\geq 1}$, $n \in g \: \! \ZZ$) :
$$ \xymatrix{
&& M^{h,h'}(n,g) \ar[lld]_-{\lim_{h' \to 0 \ }} \ar[rrd]^-{\lim_{Q \to \varepsilon}} && \\
M^h(n) \ar@<-2.5pt>[d]_-{\lim_{h \to 0}} &&&& \quad {}^L \! M^{h'}(n,g) \ar@<2.5pt>[d]^-{\lim_{h' \to 0}} \ \supset \ M^{gh'}(n/g) \\
\quad M(n) &&&& {}^L \! M(n,g) \ \supset \ M(n/g)
} $$

En d'autres mots, le module de Verma $M(n)$ de $\mathfrak{sl}_2$, qui peut être déformée une première fois selon $h$ en une représentation $M^h(n)$ de $\Uh$, peut une seconde fois être déformée selon $h'$ en une représentation $M^{h,h'}(n,g)$ de $\Uhh$. \\
Par ailleurs, la spécialisation à $Q = \varepsilon$ de cette double déformation $M^{h,h'}(n,g)$ contient la représentation ${}^L \! M^{h'}(n,g)$ de $\Urh{gh'}$, Langlands $g$-duale de $M^h(n)$. ${}^L M^{h'}(n,g)$ est la déformation selon $h'$ de la représentation ${}^L \! M(n,g)$ de $\mathfrak{sl}_2$, Langlands $g$-duale du module de Verma $M(n)$ de $\mathfrak{sl}_2$. \\

L'étude précédente des représentations $L^{h,h'}(n,g)$ de $U_{\: \! h,h'}(\mathfrak{sl}_2,g)$ et de leurs spécialisations à $Q = \varepsilon$ permet de terminer la preuve des propriétés d'interpolation de $\Uhh$ entre les groupes quantiques $\Uh$ et $\Urh{\! gh'}$.

\begin{theorem} \label{thm_uhhinterpolation}
\begin{itemize}
\item[1)] $\Uhh$ est une déformation formelle de $\Uh$ selon le paramètre $h'$. \\
En particulier, $\Uhh / (h' = 0)$ est isomorphe, en tant que $\CC[[h]]$-algèbre, à $\Uh$, via l'identification $\tilde{X}^{\pm} = X^{\pm}$ et $\tilde{H} = H$.

\item[2)] La sous-$\CC[[h']]$-algèbre $\langle {}^L \! X^{\pm}, {}^L H \rangle$ de
$$ \Bigg( \, U_{\! \: \mathcal{A}, h'} (\mathfrak{sl}_2, g) \ / \ \overline{ \big( Q = \varepsilon, \, Q^{2H} = 1, \, Q^{2 \sqrt{C}} + Q^{-2 \sqrt{C}} = \varepsilon^{2} + \varepsilon^{-2} \big) }^{ \, h'} \, \Bigg)_{\! \! h'} \, , $$
 topologiquement engendrée par ${}^L \! X^{\pm}$ et ${}^L H$, est isomorphe à $\Urh{\! gh'}$.
\end{itemize}
\end{theorem}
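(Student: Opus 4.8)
Le plan est de traiter les deux points s\'epar\'ement : le premier d\'ecoule presque imm\'ediatement de r\'esultats d\'ej\`a acquis, tandis que le second exige d'inverser un morphisme de sp\'ecialisation et repose de mani\`ere essentielle sur la dualit\'e de Langlands pour les repr\'esentations. Pour le point 1), je partirais du point 2) du th\'eor\`eme \ref{thm_uhhdefor}, selon lequel $\Uhh$ est une d\'eformation formelle (triviale) de $\Uh$ selon $h'$. Par d\'efinition d'une telle d\'eformation, l'isomorphisme $\CC$-lin\'eaire canonique entre $\Uhh/(h'=0)$ et $\Uh$ est d\'ej\`a un isomorphisme de $\CC[[h]]$-alg\`ebre ; il reste seulement \`a identifier explicitement les g\'en\'erateurs. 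Or la limite classique $h' \to 0$ des relations \eqref{eq_defx}, calcul\'ee juste avant le th\'eor\`eme \ref{thm_uhhdefor}, donne dans $\Uhh/(h'=0)$ les \'egalit\'es $[H, X^{\pm}] = \pm 2 \, X^{\pm}$ et $[X^+, X^-] = (Q^H - Q^{-H})/(Q - Q^{-1}) = \sinh(h \, H)/\sinh(h)$, qui sont exactement les relations d\'efinissant $\Uh$. L'affectation $\tilde{X}^{\pm} = X^{\pm}$, $\tilde{H} = H$ fournit donc le morphisme de $\CC[[h]]$-alg\`ebre cherch\'e, sa bijectivit\'e r\'esultant de la comparaison des bases topologiques (proposition \ref{prop_basehh}).

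Pour le point 2), la remarque \ref{rem_fond} fournit d\'ej\`a, \`a partir du lemme \ref{lem_fond}, des \'el\'ements ${}^L \! X^{\pm}$ et ${}^L \! H$ v\'erifiant $[{}^L \! H, {}^L \! X^{\pm}] = \pm 2 \, {}^L \! X^{\pm}$ et $[{}^L \! X^+, {}^L \! X^-] = \sinh(g h' \, {}^L \! H)/\sinh(g h')$, c'est-\`a-dire les relations d\'efinissant $\Urh{gh'}$ ; elle en d\'eduit un morphisme surjectif de $\CC[[h']]$-alg\`ebre $p : \Urh{gh'} \to \langle {}^L \! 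X^{\pm}, {}^L \! H \rangle$. Tout le travail se concentre donc sur l'injectivit\'e de $p$, que je me propose d'obtenir par un argument de fid\'elit\'e. L'id\'ee est de faire agir le but de $p$ sur une famille de repr\'esentations assez riche pour s\'eparer les \'el\'ements de $\Urh{gh'}$ : pour $n \in g\ZZ$, chaque ${}^L \! M^{h'}(n,g)$ est une repr\'esentation de $\langle {}^L \! X^{\pm}, {}^L \! H \rangle$, donc de $\Urh{gh'}$ par composition avec $p$, et le th\'eor\`eme \ref{thm_reprinter2} l'identifie au module de Verma $M^{gh'}(n/g)$ (\`a une somme directe pr\`es si $g$ est pair). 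Comme $n/g$ parcourt $\ZZ$ tout entier quand $n$ parcourt $g\ZZ$, on r\'ealise ainsi tous les modules de Verma $M^{gh'}(m)$, $m \in \ZZ$, munis de leur action standard, et $\ker p$ agit par $0$ sur chacun d'eux.

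Il restera alors \`a savoir que la famille $\{ M^{gh'}(m) \}_{m \in \ZZ}$ est fid\`ele sur $\Urh{gh'}$, ce qui entra\^{\i}nera aussit\^ot $\ker p = 0$. Cette fid\'elit\'e se ram\`ene au cas classique : $\Urh{gh'}$ \'etant une d\'eformation formelle triviale de $\Uc$ et chaque $M^{gh'}(m) = M(m)[[h']]$ \'etant sans $h'$-torsion, un \'el\'ement non nul $u = (h')^N u'$ de limite classique $u'_0 \neq 0$ agit non trivialement d\`es que $u'_0$ agit non trivialement sur un $M(m)$ ; or la fid\'elit\'e de la famille des modules de Verma de $\Uc$ est un fait classique (via la base de Poincar\'e--Birkhoff--Witt et la s\'eparation par les poids). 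Le principal obstacle est ici de v\'erifier avec soin que la structure de $\Urh{gh'}$-module induite par $p$ sur ${}^L \! M^{h'}(n,g)$ co\"{\i}ncide effectivement avec la structure de Verma standard annonc\'ee par le th\'eor\`eme \ref{thm_reprinter2} --- en particulier que ${}^L \! X^{\pm}$ et ${}^L \! H$ stabilisent le module non localis\'e ${}^L \! M^{h'}(n,g)$ --- afin que l'annulation de $\ker p$ sur ces modules \'equivaille bien \`a son annulation sur les $M^{gh'}(m)$.
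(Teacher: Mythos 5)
Votre d\'emonstration est correcte. Pour le point 1), elle co\"{\i}ncide avec celle de l'article, qui se contente d'invoquer le th\'eor\`eme \ref{thm_uhhdefor} (les d\'etails que vous ajoutez --- limite $h' \to 0$ des relations \eqref{eq_defx} et proposition \ref{prop_basehh} --- figurent dans la discussion qui pr\'ec\`ede ce th\'eor\`eme). Pour le point 2), vous suivez le m\^eme squelette que l'article (surjection $p$ issue de la remarque \ref{rem_fond}, puis injectivit\'e par fid\'elit\'e d'une famille de repr\'esentations qui se factorisent \`a travers $p$), mais avec une famille s\'eparante r\'eellement diff\'erente : l'article fait agir $\langle {}^L \! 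X^{\pm}, {}^L \! H \rangle$ sur les repr\'esentations de rang fini $L^{gh'}(m)$, $m \in \NN$, obtenues via le th\'eor\`eme \ref{thm_reprinter}, et conclut en admettant qu'un \'el\'ement de $\Urh{gh'}$ annulant toutes ses repr\'esentations de dimension finie est nul (fait justifi\'e par la base PBW et la description des ind\'ecomposables, avec renvoi \`a \cite[5.11]{jantzen}); vous utilisez au contraire les modules de Verma ${}^L \! M^{h'}(n,g)$, $n \in g\ZZ$, qui d'apr\`es le th\'eor\`eme \ref{thm_reprinter2} r\'ealisent tous les $M^{gh'}(m)$, $m \in \ZZ$ (comme facteurs directs si $g$ est pair), et vous d\'emontrez la fid\'elit\'e de la famille $\{ M^{gh'}(m) \}_{m \in \ZZ}$ de fa\c{c}on autonome, en la ramenant au cas classique gr\^ace \`a l'\'ecriture $u = (h')^N u'$ avec $u'_0 \neq 0$, \`a l'absence de $h'$-torsion de $M(m)[[h']]$, et \`a la fid\'elit\'e --- \'el\'ementaire via PBW et la s\'eparation par les poids --- de la famille des Verma de $\Uc$. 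Chaque approche a son int\'er\^et : la v\^otre est plus autosuffisante, puisqu'elle \'evite de recourir \`a la classification des repr\'esentations de rang fini de $\Urh{gh'}$, au prix de sortir de la cat\'egorie de rang fini; celle de l'article est plus courte mais d\'el\`egue l'argument de fid\'elit\'e \`a un r\'esultat ext\'erieur. Signalons enfin que l'obstacle que vous mentionnez en conclusion n'en est pas un : les isomorphismes du th\'eor\`eme \ref{thm_reprinter2} sont, par construction m\^eme, des isomorphismes de repr\'esentations de $\Urh{gh'}$ pour l'action induite par ${}^L \! X^{\pm}$ et ${}^L \! H$ --- c'est-\`a-dire pr\'ecis\'ement l'action obtenue en composant avec $p$ --- et la stabilisation du module non localis\'e ${}^L \! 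M^{h'}(n,g)$ par ces \'el\'ements est v\'erifi\'ee dans l'article juste avant la d\'efinition des repr\'esentations Langlands $g$-duales.
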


\begin{proof}
Le premier point a déjà été traité : voir le théorème \ref{thm_uhhdefor}. \\
Quant au second, d'après le théorème \ref{thm_reprinter}, quelque soit $m \in \NN$, l'action de $\Urh{\! gh'}$ sur la représentation $L^{gh'} (m)$ se factorise à travers le morphisme surjectif
$$ U_{\! g h'}(\mathfrak{sl}_2) \ \longrightarrow \ \langle {}^L \! X^{\pm}, {}^L \! H \rangle $$
donné dans la remarque \ref{rem_fond}. Par suite ce morphisme est injectif. En effet un élément de $U_{\! g h'}(\mathfrak{sl}_2)$ appartenant au noyau de la surjection précédente agit alors par zéro sur toutes ses représentations de dimension finie, et par suite est nul. Ce dernier fait repose sur l'existence de la base PBW de $U_{\! g h'}(\mathfrak{sl}_2)$ et sur la description de ses représentations indécomposables de rang finie (on pourra consulter \cite[5.11]{jantzen} pour un résultat analogue dans le cas de $U_q(\Glie)$, où $\Glie$ est une algèbre de Lie semi-simple de dimension finie; notons toutefois que la démonstration dans le de $U_q(\mathfrak{sl}_2)$ ou $U_h(\mathfrak{sl}_2)$ est plus simple).
\end{proof}

On peut illustrer le théorème précédent par le diagramme suivant :
$$ \xymatrix{
&& \Uhh \ar[lld]_-{\lim_{h' \to 0 \ }} \ar[rrd]^-{\lim_{Q \to \varepsilon}} && \\
\Uh \ar@<-2.5pt>[d]_-{\lim_{h \to 0}} &&&& \Urh{gh'} \ar@<2.5pt>[d]^-{\lim_{h' \to 0}} \\
\Uc &&&& \Uc
} $$

En d'autres mots, l'algèbre enveloppante $\Uc$, qui admet le groupe quantique $\Uh$ comme première déformation selon $h$, est déformée une seconde fois selon $h'$ pour chaque $g \in \NN_{\geq 1} $ : on obtient alors les groupes quantiques d'interpolation de Langlands. \\
Ceux-ci, quand ils sont spécialisés à $Q = \exp h = \varepsilon$, permettent de retrouver les groupes quantiques $\Urh{\! gh'}$, déformations selon $h'$ de $\Uc$.

\section*{Pistes de Recherche}
Les groupes quantiques d'interpolation de Langlands de rang $1$ constituent la brique élémentaire, à partir de laquelle seront construits, dans un prochain article, les groupes quantiques d'interpolation de Langlands $\widetilde{U}_{h,h'}(\Glie,g)$ associés à une algèbre de Kac-Moody (symétrisable). \\
Une question naturelle et importante est : existe-t-il une double déformation des relations de Serre, de telle sorte qu'il soit possible de construire des groupes quantiques d'interpolation $U_{h,h'}(\Glie,g)$ qui double déforment l'algèbre enveloppante $U(\Glie)$? Cette question semble très liée à celle de l'existence d'une structure d'algèbre de Hopf sur $\Uhh$ et par suite sur $\widetilde{U}_{h,h'}(\Glie,g)$. Plusieurs indices semblent indiquer qu'il pourrait en effet exister un coproduit sur $\Uhh$ (ou du moins une structure tensorielle sur sa catégorie des représentations) qui interpole les coproduits de $\Uh$ et $\Urh{gh}$. \\
Une nouvelle direction de recherche apparaît alors : trouver des $R$-matrices universelles pour les groupes quantiques d'interpolation, afin de produire des invariants de noeuds, qui en un sens contiendraient simultanément les invariants de noeuds fournis par les deux groupes quantiques interpolés. \\

Signalons que l'existence d'une double déformation des représentations $L(\lambda)$ possédant la propriété d'interpolation \eqref{eq_reprinter} est liée à l'étude dans \cite{hernandez} de la dualité de Langlands au niveau des cristaux. On peut se demander alors si il serait possible de développer une théorie des bases cristallines pour les groupes quantiques d'interpolation de Langlands, dont une des propriétés serait en un certain sens d'interpoler les théories déjà existantes pour $\Glie$ et ${}^L \Glie$ : voir par exemple \cite[conjecture 3]{hernandez}. \\

Dernière voie envisageable : dans le cas où $\Glie$ est une algèbre de Kac-Moody affine, il serait intéressant de se demander si les groupes quantiques d'interpolation associés à $\Glie$ permettent d'expliquer la dualité de Langlands au niveau des $q$-caractères, déjà démontrée pour les modules de Kirillov-Reschetikhin dans \cite{hernandez2}.

\acks Je voudrais remercier mon directeur de thèse, David Hernandez, qui m'a proposé d'étudier ce problème et soutenu dans mes recherches. J'aimerais aussi remercier Marc Rosso et Éric Vasserot d'avoir pris le temps de répondre à quelques-unes de mes questions. Enfin je suis reconnaissant à tous ceux, et en particulier à Yohan Brunebarbe, Xin Fang, Dragos Fratila, Mathieu Mansuy et Sarah Scherotzke, qui, grâce à des échanges enrichissants, m'ont permis de mieux avancer.

\nocite{*}
\bibliographystyle{alpha-fr}
\bibliography{grp_q_interpolation_rg1_imrn}

\end{document}